\renewcommand{\leq}{\leqslant}
\renewcommand{\geq}{\geqslant}
\newcommand{\Z}{\mathbb{Z}}
\renewcommand{\P}{\mathbb{P}}
\newcommand{\E}{\mathbb{E}}
\DeclareFixedFont{\beaupetit}{T1}{ftp}{b}{n}{2cm}
\newtheorem{theorem}{Theorem}[]
\newtheorem{definition}{Definition}[]
\newtheorem{proposition}[]{Proposition}
\newtheorem{lemma}[]{Lemma}
\newtheorem{corollary}[]{Corollary}
\theoremstyle{definition}
\newtheorem{algorithm}{Algorithm}
\newtheorem*{remark}{Remark}
\newtheorem*{notation}{Notation}
\newcommand{\m}{\mathbf{m}}
\newcommand{\Vm}{\mathrm{V}(\m)}
\newcommand{\Em}{\mathrm{E}(\m)}
\newcommand{\Fm}{\mathrm{F}(\m)}
\newcommand{\M}{\mathcal{M}}
\newcommand{\Minf}{\mathbf{M}^{(\infty)}_\q}
\newcommand{\q}{\mathsf{q}}
\newcommand{\qt}{\tilde{\mathsf{q}}}
\newcommand{\N}{\mathbb{N}}
\newcommand{\R}{\mathbb{R}}
\newcommand{\wq}{w_{\q}}
\newcommand{\Zq}{Z_{\q}}
\newcommand{\Pq}{\mathbb{P}_{\q}}
\newcommand{\Pqinf}{\mathbb{P}_{\q}^{(\infty)}}
\newcommand{\Ptinf}{\bar{\mathbb{P}}_{\q}^{(\infty)}}
\newcommand{\Pql}{\mathbb{P}^{(l)}_{\q}}
\newcommand{\fq}{f_{\q}}
\newcommand{\IBHPM}{\textup{\textsf{IBHPM}}}
\renewcommand{\rq}{r_{\q}}
\newcommand{\p}{\mathbf{p}}
\newcommand{\C}{\mathcal{C}}
\newcommand{\e}{\bar{\mathbf{e}}}
\newcommand{\A}{\mathcal{A}}
\newcommand{\Cs}{\mathsf{C}}
\newcommand{\Gs}{\mathsf{G}}
\newcommand{\I}{\mathcal{I}}
\newcommand{\Pt}{\bar{\mathbb{P}}}
\renewcommand{\c}{\mathbf{c}}
\newcommand{\cut}{\textup{\textsf{Cut}}}
\newcommand{\Hcut}{\textup{\textsf{Hcut}}}
\renewcommand{\sb}{\sigma}
\newcommand{\Br}{\partial_r\Minf}
\newcommand{\Bl}{\partial_l\Minf}
\title{Duality of random planar maps \emph{via} percolation}
\author{Nicolas Curien \footnote{Université Paris-Saclay and Institut Universitaire de France. E-mail: \texttt{nicolas.curien@gmail.com}.} \quad \text{and} \quad Lo\"ic
  Richier\footnote{\'Ecole Polytechnique. E-mail: \texttt{loic.richier@polytechnique.edu}.}} 
\begin{document}
            
             \maketitle
\vspace{-9mm}
             \abstract{We discuss duality properties of critical Boltzmann planar maps such that the degree of a typical face is in the domain of attraction of a stable distribution with parameter $\alpha\in(1,2]$. We consider the critical Bernoulli bond percolation model on a Boltzmann map in the dilute and generic regimes $\alpha \in (3/2,2]$, and show that the open percolation cluster of the origin is itself a Boltzmann map in the dense regime $\alpha \in (1,3/2)$, with parameter \[\alpha':= \frac{2\alpha+3}{4\alpha-2}.\] This is the counterpart in random planar maps of the duality property $\kappa \leftrightarrow 16/\kappa$ of Schramm--Loewner Evolutions and Conformal Loop Ensembles, recently established by Miller, Sheffield and Werner \cite{miller_cle_2017}. As a byproduct, we identify the scaling limit of the boundary of the percolation cluster conditioned to have a large perimeter. The cases of subcritical and supercritical percolation are also discussed. In particular, we establish the sharpness of the phase transition through the tail distribution of the size of the percolation cluster.
               }
\vspace{-2mm}
\begin{figure}[ht]
	\centering
	\includegraphics[scale=.6]{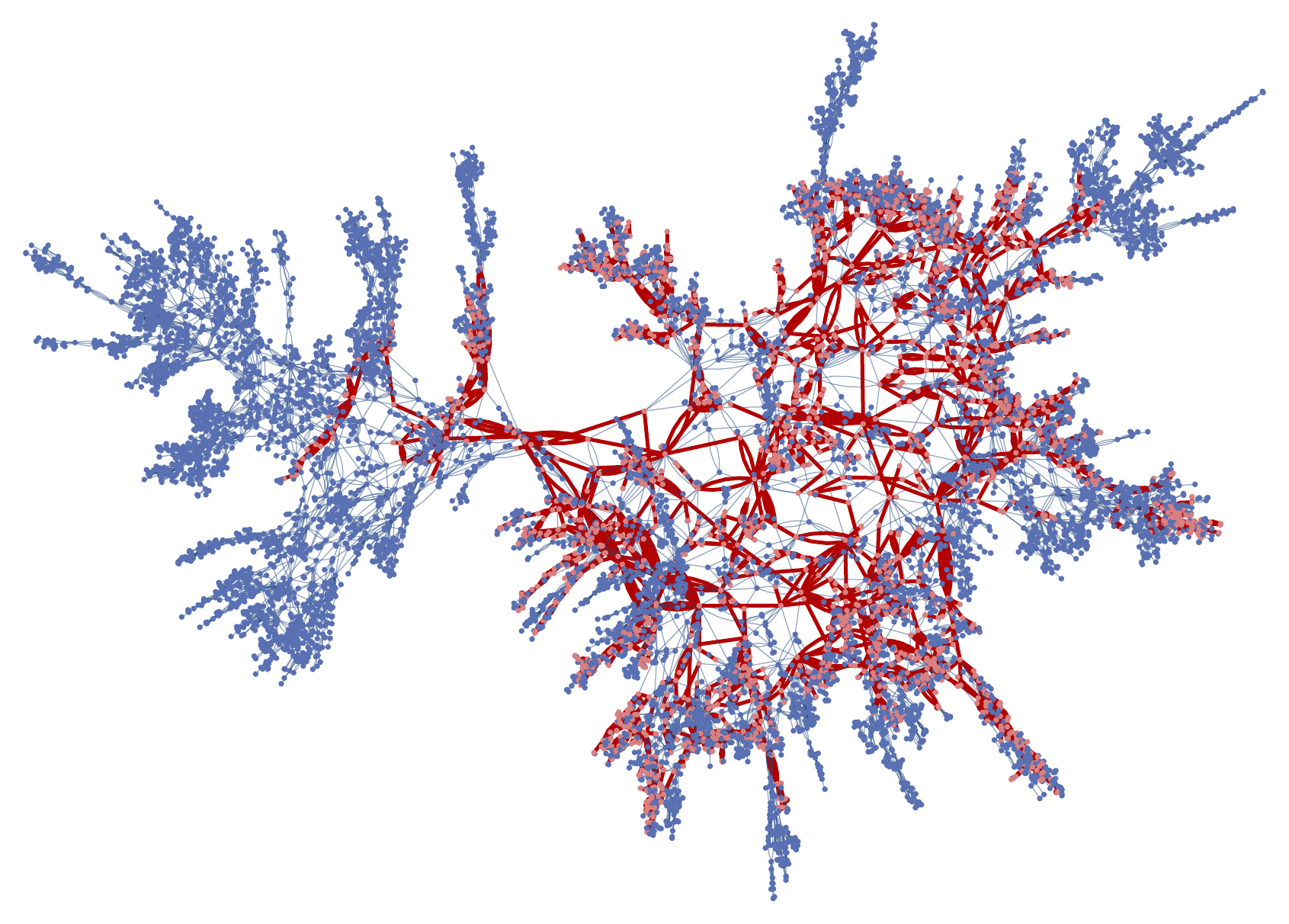}
	\vspace{-1mm}
	\caption{A Boltzmann map equipped with a critical bond percolation model. The open percolation cluster of the origin is in red.}
	\label{fig:test2}
	\end{figure}

\section{Introduction and main results}

The purpose of this work is to study duality properties of Boltzmann planar maps through the Bernoulli bond percolation model. The Boltzmann measures on planar maps are para\-metriz\-ed by a weight sequence $\q=(q_1,q_2,\ldots)$ of nonnegative real numbers assigned to the faces of the maps. Precisely, the Boltzmann weight of a \textit{bipartite} planar map $\m$ (that is, with faces of even degree) is 
\[\wq(\m):=\prod_{f \in \textup{Faces}(\m)}q_{\frac{\deg(f)}{2}}.\] The sequence $\q$ is said to be \textit{admissible} when $\wq$ gives a finite measure to the set of \textit{rooted} bipartite maps, i.e.\ with a distinguished oriented edge called the \textit{root edge}. The resulting probability measure $\Pq$ is the Boltzmann measure with weight sequence $\q$. A planar map with distribution $\Pq$ is called a $\q$-Boltzmann map, and denoted by $M_\q$.

\smallskip

The \textit{scaling limits} of Boltzmann bipartite maps conditioned to have a large number of faces have been actively studied, providing also a natural classification of the weight sequences. This classification is better understood in terms of pointed rooted Boltzmann maps, i.e., with a distinguished vertex in addition to the root edge. We generally focus on \textit{critical} weight sequences, so that the average number of edges of such a map is infinite (otherwise, the weight sequence is called \textit{subcritical}). Among critical weight sequences, special attention has been given to \textit{generic critical} sequences, for which the degree of a typical face has finite variance. Building on earlier works of Marckert \& Miermont \cite{marckert_invariance_2007} and Le Gall \cite{le_gall_uniqueness_2013}, Marzouk proved in \cite{marzouk_scaling_2016} that generic critical Boltzmann maps all have the same scaling limit, the \textit{Brownian map}. For a different scaling limit to arise, Le Gall \& Miermont initiated in \cite{le_gall_scaling_2011} the study of critical sequences $\q$ such that the degree of a typical face is in the domain of attraction of a stable law with parameter $\alpha\in(1,2)$. The weight sequence $\q$ is then called \textit{non-generic critical} with parameter $\alpha$, and the associated $\q$-Boltzmann map $M_\q$ is said to be \textit{discrete stable} with parameter $\alpha$. Under slightly stronger assumptions, they proved the subsequential convergence towards random metric spaces called the \textit{stable maps} with parameter $\alpha$ (see also~\cite{budd_geometry_2017,bertoin_martingales_2016} for a study of their dual maps). The geometry of non-generic critical Boltzmann maps exhibits large faces that remain present in the scaling limit. The behaviour of these faces is believed to differ in the dense phase $\alpha\in(1,3/2)$, where they are supposed to be self-intersecting, and in the dilute phase $\alpha\in(3/2,2)$, where it is conjectured that they are self-avoiding, see \cite{richier_limits_2017}. In this paper, we will also deal with $\q$-Boltzmann maps that are discrete stable with parameter $\alpha=2$, meaning that $\q$ is critical and that the degree of a typical face falls into the domain of attraction of a Gaussian distribution (thereby generalizing the generic critical regime).

\medskip

The framework of this paper is the Bernoulli bond percolation model on discrete stable maps with parameter $\alpha\in(3/2,2]$. Given a planar map $\m$, the bond percolation model on $\m$ is defined by declaring each edge open (or black) with probability $p\in[0,1]$ and closed (or white) otherwise, independently for all edges. When considering this model, we implicitly work conditionally on the event that the root edge of the map is open. We are mostly interested in the open connected component containing the root edge, called the (open) \textit{percolation cluster} of the origin. 

As we will detail in Section \ref{sec:BondPercolation}, there is a natural definition of critical parameter, the \textit{percolation threshold} $p_\q^c\in[0,1]$, that has been determined in \cite{curien_peeling_2016} as an explicit function of the weight sequence $\q$. Our main result deals with the distribution of the percolation cluster of the origin in Boltzmann maps.

\begin{theorem}\label{th:dualityperco}
	Let $M_\q$ be a discrete stable map with parameter $\alpha\in(3/2,2]$, equipped with a Bernoulli bond percolation model of parameter $p\in[0,1]$. Let $\C$ be the percolation cluster of the origin in $M_\q$. Then $\C$ is a Boltzmann map (conditioned to have at least one edge). Moreover, when $p=p_\q^c$, the map $\C$ is discrete stable with parameter
	\[\alpha':= \frac{2\alpha+3}{4\alpha-2}\in[7/6,3/2),\] while when $p<p_\q^c$ it is subcritical and if $p>p_\q^c$ it is discrete stable with parameter $\alpha'=\alpha$.
\end{theorem}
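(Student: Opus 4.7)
The plan is to prove Theorem~\ref{th:dualityperco} in three steps: a cluster decomposition exhibiting $\C$ as a Boltzmann map with a modified weight sequence $\qt$; an explicit expression for $\qt_k$ in terms of the disk partition functions of $\q$; and an asymptotic analysis of $\qt_k$ identifying the new parameter $\alpha'$.

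For Step~1, I would fix an abstract rooted bipartite map $c$ and compute $\Pq(\C=c)$ by decomposing every compatible configuration $(M_\q,\omega)$ into the cluster $c$ (with all its edges declared open) together with an independent ``bubble'' inside each face $f$ of $c$. A bubble is a $\q$-Boltzmann map with boundary of perimeter $\deg(f)$, glued along the boundary walk of $f$, subject to the constraint that every edge of the bubble incident to a boundary vertex is closed (otherwise the open cluster of the origin would grow beyond $c$). Summing independently over bubbles produces a face partition function $B_k$, and redistributing the cluster-edge weight $p^{|E(c)|}$ via the bipartite identity $|E(c)| = \sum_f \deg(f)/2$ yields
\[
\Pq(\C = c) \;\propto\; \prod_{f \in F(c)} \qt_{\deg(f)/2}, \qquad \qt_k := p^{k}\, B_k,
\]
proving that $\C$ is a $\qt$-Boltzmann map (automatically conditioned to contain the open root edge). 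Step~2 is then to express $B_k$ in closed form: summing over the open/closed status of the interior bubble edges not adjacent to the boundary (each contributes $p + (1-p) = 1$) reduces $B_k$ to the $\q$-disk partition function of maps with boundary $2k$ equipped with an extra weight $(1-p)$ per edge from a boundary vertex into the interior. Via the peeling-type manipulations of~\cite{curien_peeling_2016}, $\qt_k$ can then be written explicitly in terms of $p$ and of the classical boundary partition function $W^{(2k)}_\q$.

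For Step~3, the asymptotics from~\cite{le_gall_scaling_2011,budd_geometry_2017} give $W^{(2k)}_\q \sim C\, c_\q^{\,k}\, k^{-\alpha - 1/2}$ (up to possible logarithmic corrections at $\alpha = 2$) for $\q$ discrete stable with parameter $\alpha \in (1,2]$. The threshold $p_\q^c$ of~\cite{curien_peeling_2016} is precisely the value of $p$ balancing the exponential $p^{k}$ against the analogous one appearing in $B_k$, leaving only a polynomial tail at $p = p_\q^c$. Careful bookkeeping of the polynomial correction coming from the closed-boundary constraint then yields $\qt_k \sim C'\, k^{-\alpha' - 1}$ with $\alpha' + 1 = (6\alpha + 1)/(4\alpha - 2)$, i.e.\ $\alpha' = (2\alpha + 3)/(4\alpha - 2)$. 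For $p \neq p_\q^c$ the balance fails: when $p < p_\q^c$ the $p^k$ decay dominates and $\qt_k$ becomes exponentially small, so $\qt$ is subcritical; when $p > p_\q^c$ the imbalance instead shifts the analysis of $B_k$ into a regime where the $k^{-\alpha - 1}$ tail of the original weights $q_k$ dominates, so $\qt$ remains $\alpha$-stable.

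The main obstacle is Step~3: starting from $W^{(2k)}_\q \sim k^{-\alpha - 1/2}$, tracking the polynomial contribution of the $(1-p)^{\cdot}$ boundary weights to reach the precise exponent $(6\alpha + 1)/(4\alpha - 2)$ requires fine control of a Boltzmann-biased disk function, for which the peeling approach of~\cite{curien_peeling_2016} is the natural framework. The elegance of the resulting identity $\alpha' = (2\alpha + 3)/(4\alpha - 2)$, and in particular its agreement with the SLE duality $\kappa \leftrightarrow 16/\kappa$ highlighted in the introduction, is ultimately what makes the careful bookkeeping worth the trouble.
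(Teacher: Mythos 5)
Your Step~1 is exactly the paper's Proposition~\ref{prop:LawCluster}: the island decomposition of the percolated map gives $\qt_k = p^k I_k(p)$, where $I_k(p)$ is the island partition function \eqref{eqn:PartFctIslands}. This part is correct and matches the paper. Your Steps~2 and~3, however, diverge from the paper in a way that leaves a genuine gap.

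The core issue is that you propose to compute the asymptotics of $\qt_k$ (equivalently of $I_k(p)$) directly, and assert in Step~2 that this can be ``written explicitly in terms of $p$ and $W^{(2k)}_\q$''. No such closed form exists, and the paper never derives one. Islands are maps with a simple boundary in which all boundary edges are black \emph{and} all interior edges incident to the boundary are white; this two-sided constraint does not factor through the ordinary disk partition function $W^{(k)}_\q$ in any straightforward way. Your Step~3 then claims that ``careful bookkeeping of the polynomial correction'' transforms $k^{-\alpha-1/2}$ into the exponent giving $\alpha'$. But that bookkeeping \emph{is} the entire difficulty: the nontrivial shift in polynomial exponent at $p = p^c_\q$ is exactly the hard analytic content, and it cannot be read off from a single generating-function identity. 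Your plan is, in spirit, the analytic-combinatorics route of \cite{bernardi_boltzmann_2017} for triangulations; the present paper explicitly replaces that route because it does not carry over cleanly to general bipartite weight sequences.

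The paper's approach sidesteps $\qt_k$ entirely. The key observation is Proposition~\ref{prop:TypeSequences2}: one can characterize the type $\alpha'$ of $\qt$ through the asymptotics of the disk partition function $W_{\qt}^{(k)}$, and this quantity is accessible probabilistically. Proposition~\ref{prop:CutEdgeBoltzmann} shows that the conditional probability $\Pqinf(\cut_k \mid \#\partial\C \geq 2m)$ converges to $r_{\qt}^{\,k+1} W_{\qt}^{(k)}$ as $m \to \infty$, turning a combinatorial quantity into a probability. Theorem~\ref{thm:CutEdgeIBHPM} then estimates this probability by running the peeling Algorithm~\ref{alg:Peelingperco} on the half-planar map $\Minf$: the black boundary length $(B^*_i)$ is a random walk whose increments lie in the domain of attraction of an $(a{-}1)$-stable law, and $\Hcut_k$ translates into the event $\{T^* = 2k{+}1,\ B^*_{T^*}=0\}$, whose probability is computed with the cyclic lemma and a local limit theorem. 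This produces the exponent $a/(a{-}1) = (\alpha + 1/2)/(\alpha - 1/2)$, giving $a' = a/(a{-}1)$ and hence $\alpha' = (2\alpha+3)/(4\alpha-2)$ without ever touching $I_k(p)$. You should also note that your subcritical claim (``the $p^k$ decay dominates, so $\qt$ is subcritical'') is not justified by your framework either: the paper handles $p < p^c_\q$ by a separate argument, proving exponential decay of the cluster perimeter via Lemma~\ref{lem:ExponentialTailPerimeter} and then arguing by contradiction against $\qt$ being critical, which requires the critical case of Theorem~\ref{th:dualityperco} as input. In short, keep Step~1, but for Steps~2 and~3 you need the cut-edge reformulation and the peeling random-walk estimates; the direct computation of $\qt_k$ you envision has no clear route to completion.
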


\begin{remark} 

Notice that the whole range of values $\alpha'$ in the dense regime is not accessible, the parameter $\alpha'=7/6$ being a lower bound. This is the parameter associated with critical percolation on discrete stable maps with parameter $\alpha=2$, see \cite{bernardi_boltzmann_2017} for the case of uniform triangulations. We interpret this result has a duality property of Boltzmann maps through critical percolation, which has also been observed in the so-called \textit{Schramm--Loewner Evolutions} (SLE) and \textit{Conformal Loop Ensembles} (CLE) by Miller, Sheffield and Werner \cite{miller_cle_2017}. By Theorem \ref{th:dualityperco}, we establish the discrete counterpart of this result in random planar maps. Note that the results of \cite{miller_cle_2017} also call on a continuum analog of critical Bernoulli percolation. This question is closely related to a stronger form of the celebrated Knizhnik--Polyakov--Zamolodchikov (KPZ) formula \cite{knizhnik_fractal_1988}. Namely, it is believed that planar maps equipped with statistical mechanics models converge towards the so-called \textit{Liouville Quantum Gravity} (LQG) model \cite{duplantier_liouville_2011} coupled with a CLE of a certain parameter $\kappa \in (8/3,8)$ (see \cite{gwynne_convergence_2017} for an example in the case of the percolation model on quadrangulations). Moreover, discrete stable maps are known to be related to planar maps equipped with a $\mathcal{O}(n)$ loop model \cite{borot_recursive_2012} through the so-called \textit{gasket decomposition}. As a consequence, there is a conjectural relation between the parameter $\alpha \in (1,2]$ of Boltzmann maps and the parameter $\kappa$ of CLEs, given by the formula 
\[\alpha=\frac{1}{2}+\frac{4}{\kappa}.\] We can thus check that the duality relation of Theorem \ref{th:dualityperco} corresponds to the duality relation for SLEs and CLEs of \cite{miller_cle_2017}, that is $\kappa'=16/\kappa$, through this identity. 

Finally, it has been established in \cite[Theorem 12]{curien_peeling_2016} that $p_{\q}^{c}=1$ for $\q$-Boltzmann maps in the subcritical and dense regimes, which explains why we consider only discrete stable maps with parameter $\alpha\in(3/2,2]$ in Theorem \ref{th:dualityperco}.
\end{remark}

The recent results of \cite{richier_limits_2017} also allow to identify the scaling limit of the boundary of the percolation cluster, in the Gromov--Hausdorff sense (see \cite{burago_course_2001} for details on this topology).

\begin{corollary}{\textup{\cite[Theorem 1.1]{richier_limits_2017}}} Let $M_\q$ be a discrete stable map with parameter $\alpha\in(3/2,2]$, equipped with a Bernoulli bond percolation model of parameter $p=p_\q^c$. Let $\C_k$ be the percolation cluster of the origin in $M_\q$, conditioned to have perimeter $2k$ (equipped with its graph distance). Then there exists a slowly varying function $\Lambda$ such that in the Gromov-Hausdorff sense,
	\[\frac{\Lambda(k)}{k^{1/\beta}}\cdot \partial \C_k \underset{k\rightarrow \infty}{\overset{(d)}{\longrightarrow}} \mathscr{L}_{\beta}, \qquad \text{where} \qquad \beta:=\alpha-\frac{1}{2} \in(1,3/2]\]and $\mathscr{L}_{\beta}$ is the random stable looptree with parameter $\beta$, see \cite{curien_random_2014}.
\end{corollary}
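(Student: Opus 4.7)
The corollary should follow by chaining Theorem \ref{th:dualityperco} with the scaling limit result \cite[Theorem 1.1]{richier_limits_2017} for boundaries of dense discrete stable Boltzmann maps. The plan is simply to feed the output of the former into the hypotheses of the latter.

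By Theorem \ref{th:dualityperco}, at $p = p_\q^c$ the cluster $\C$ is itself a Boltzmann map that is discrete stable with parameter
\[\alpha' = \frac{2\alpha + 3}{4\alpha - 2} \in \left[\frac{7}{6}, \frac{3}{2}\right),\]
so it lies in the dense regime. Conditioning on $|\partial \C| = 2k$, the map $\C_k$ is a dense discrete stable Boltzmann map with boundary of length $2k$, which is exactly the input of \cite[Theorem 1.1]{richier_limits_2017}. Applying that theorem to $\C_k$ immediately yields Gromov--Hausdorff convergence of the rescaled boundary toward a stable looptree, for some explicit rescaling of the form $\Lambda(k)/k^\gamma$ with a slowly varying function $\Lambda$ inherited from the weight sequence of $\C$ (itself read off from the proof of Theorem \ref{th:dualityperco}).

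The only thing left to verify is that the parameter of the limiting looptree and the scaling exponent produced by \cite[Theorem 1.1]{richier_limits_2017} match those claimed in the corollary. This is purely an algebraic check of the duality relation: a direct computation gives
\[\alpha' - \frac{1}{2} = \frac{2\alpha + 3}{4\alpha - 2} - \frac{1}{2} = \frac{8}{8\alpha - 4} = \frac{1}{\alpha - 1/2} = \frac{1}{\beta},\]
so the looptree parameter $1/(\alpha' - 1/2)$ delivered by \cite[Theorem 1.1]{richier_limits_2017} is precisely $\beta$, and the corresponding boundary rescaling exponent $\alpha' - 1/2$ is precisely $1/\beta$. There is no significant obstacle: all of the substance is already absorbed into Theorem \ref{th:dualityperco}, and the remaining step is the duality identity above, which explains the appearance of the dense-regime parameter $\beta \in (1, 3/2]$ even though one starts from a map in the dilute or generic regime.
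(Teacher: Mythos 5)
Your proposal is correct and is exactly the approach the paper intends: the statement is presented as a corollary of Theorem \ref{th:dualityperco} together with the external result \cite[Theorem 1.1]{richier_limits_2017}, and the paper offers no further proof beyond that citation. The reduction you describe (Proposition \ref{prop:LawCluster} shows $\C$ is a $\qt$-Boltzmann map, Theorem \ref{th:dualityperco} shows $\qt$ is discrete stable with dense parameter $\alpha'$, conditioning on $\#\partial\C=2k$ puts $\C_k$ under $\P_{\qt}^{(k)}$, and then \cite[Theorem 1.1]{richier_limits_2017} applies) is what is meant, and the algebraic check $\alpha' - \tfrac12 = \tfrac{1}{\alpha - 1/2} = \tfrac{1}{\beta}$ is correct and confirms that the looptree parameter and the rescaling exponent match the claim.
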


We also prove the following results concerning the distribution of the size of the percolation cluster (see the end of this introduction for the meaning of the notation $\approx$).

\begin{proposition}\label{prop:Sharpness}
	Let $M_\q$ be a discrete stable map with parameter $\alpha\in(3/2,2]$, equipped with a Bernoulli bond percolation model of parameter $p\in[0,1]$. Let $\C$ be the percolation cluster of the origin in $M_\q$, and $\vert \C \vert $ be its total number of vertices. Then the following estimates hold.
	
\begin{itemize}
	\item Critical case. If $p=p_\q^c$, we have
\[\Pq(\vert \C \vert  = 2n)\underset{n \rightarrow \infty}{\approx} n^{-\frac{8\alpha+4}{2\alpha+3}}.\]

\item Supercritical case. If $p_\q^c<p\leq 1$, we have
\[\Pq(\vert \C \vert  = 2n)\underset{n \rightarrow \infty}{\approx} n^{-\frac{2\alpha+1}{\alpha}}.\]

\item Subcritical case. If $0\leq p<p_\q^c$, there exists $C_1,C_2>0$ such that for every $n \in \Z_{\geq 0}$,
\[\Pq(\vert \C \vert  = 2n)\leq C_1\exp(-C_2 n).\]

\end{itemize}
\end{proposition}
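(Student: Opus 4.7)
The plan is to apply Theorem \ref{th:dualityperco} to reduce the question to the tail of the number of vertices of a $\q'$-Boltzmann map, for an explicit weight sequence $\q'$ that depends on $p$. Specifically, by Theorem \ref{th:dualityperco}, conditionally on the event $\A$ that $\C$ is finite and contains at least one edge, $\C$ has the law of a $\q'$-Boltzmann map $M_{\q'}$, where $\q'$ is discrete stable with parameter $\alpha' = (2\alpha+3)/(4\alpha-2)$ when $p=p_\q^c$, discrete stable with parameter $\alpha$ when $p>p_\q^c$, and subcritical when $p<p_\q^c$. Since $\Pq(\A)>0$ in all three regimes (in the supercritical case this uses that the root edge can be isolated, so $\Pq(\C\text{ is finite})>0$), the asymptotics of $\Pq(|\C|=2n)$ and $\mathbb{P}_{\q'}(|V(M_{\q'})|=2n)$ differ only by a positive multiplicative constant, and it remains to estimate the latter in each of the three regimes.

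For the critical and supercritical cases, I would invoke the classical polynomial tail for the vertex count of a discrete stable Boltzmann map of parameter $\beta \in (1,2]$,
\[\mathbb{P}_{\q'}\bigl(|V(M_{\q'})|=n\bigr)\ \approx\ n^{-2-1/\beta},\]
up to a slowly varying factor. One clean derivation goes through the Bouttier--Di Francesco--Guitter bijection: the pointed $\q'$-Boltzmann measure is encoded by a two-type Galton--Watson tree whose offspring law is attracted to a $\beta$-stable distribution, so its total progeny has the classical tail $n^{-1-1/\beta}$, and converting from the pointed back to the unpointed measure introduces an extra factor of $1/n$, yielding the exponent $2+1/\beta$. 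Substituting $\beta = \alpha'$ gives the critical exponent $2+1/\alpha' = (8\alpha+4)/(2\alpha+3)$, while $\beta = \alpha$ gives the supercritical exponent $(2\alpha+1)/\alpha$, as required.

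In the subcritical regime, the sequence $\q'$ supplied by Theorem \ref{th:dualityperco} is subcritical, which (combined with admissibility) implies that the generating series of rooted $\q'$-Boltzmann maps extends analytically strictly past its critical radius. A standard Cauchy bound on coefficients then yields an exponential estimate $\mathbb{P}_{\q'}(|V(M_{\q'})|=n)\leq C_1'\exp(-C_2 n)$, which transfers to $\Pq(|\C|=2n)$ via the reduction of the first paragraph.

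The most delicate step in the whole argument is the polynomial tail at the level of the unpointed Boltzmann measure: existing forms of this estimate in the literature are typically stated for the number of edges or faces and under the pointed measure, so some bookkeeping is required to translate between the different notions of size and to track the slowly varying prefactor hidden in the $\approx$ notation.
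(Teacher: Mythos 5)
Your treatment of the critical and supercritical cases matches the paper's: apply Theorem~\ref{th:dualityperco} to identify the cluster as a $\q'$-Boltzmann map, then invoke the BDG/Janson--Stef\'ansson encoding and the tail for the total progeny of a stable Galton--Watson tree, followed by the factor $1/n$ to pass from the pointed to the unpointed measure. This is exactly Proposition~\ref{prop:degree} applied with the appropriate parameter, and the exponent arithmetic $2+1/\alpha'=(8\alpha+4)/(2\alpha+3)$ checks out.

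The subcritical case, however, contains a genuine gap. You assert that ``the sequence $\q'$ supplied by Theorem~\ref{th:dualityperco} is subcritical, which (combined with admissibility) implies that the generating series of rooted $\q'$-Boltzmann maps extends analytically strictly past its critical radius.'' This implication is false: subcriticality of $\qt$ only means $m_{\mu_{\qt}}<1$, which is compatible with $\mu_{\qt}$ (hence $\binom{2k-1}{k-1}\qt_k$) having a polynomial tail, in which case the radius of convergence of $f_{\qt}$ is exactly $Z_{\qt}$ and no analytic continuation is available. In such a situation $\qt_k W^{(k)}_{\qt}$, and hence $\mathbb{P}_{\q'}(\#\mathrm{V}(M_{\q'})=n)$, would decay only polynomially. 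The paper's remark after Proposition~\ref{prop:Sharpness} explicitly warns about this: the subcritical case does \emph{not} follow from Theorem~\ref{th:dualityperco} alone. What actually forces $f_{\qt}(Z_{\qt}+\varepsilon)<\infty$ is Lemma~\ref{lem:ExponentialTailPerimeter}, the exponential decay of the perimeter of $\mathcal{C}$, proved independently via the peeling exploration with a drift argument. Once that is in place, one tilts the weights $\qt(u)=(u^{k-1}\qt_k)$, applies the implicit function theorem near $u=1$ (this is where the analytic extension is actually used) to see $\qt(u)$ stays admissible for some $u>1$, and then Euler's formula translates this into $\mathbb{E}_{\q}[u^{|\mathcal{C}|}]<\infty$. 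Your Cauchy-bound idea is in the right spirit for the final step, but without the peeling input the key analyticity premise is unjustified. (A minor point: the map $M_{\q}$ is finite, so $\mathcal{C}$ is always finite; the conditioning on finiteness you introduce is vacuous and unnecessary.)
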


Note that in the previous statement, we implicitly restricted ourselves to the values of $n$ for which $\Pq(\vert \C \vert  = 2n)\neq 0$.

\begin{remark} In the wording of statistical mechanics, this result is known as the \textit{sharpness of the phase transition} of the Bernoulli bond percolation model, since the tail distribution of the volume of the percolation cluster is polynomial in the (super)critical regime, and suddenly becomes exponential in the subcritical regime. In the case of critical and supercritical percolation, Proposition \ref{prop:Sharpness} is a direct consequence of Theorem \ref{th:dualityperco} and the Bouttier--Di Francesco--Guitter bijection \cite{bouttier_planar_2004} (see Proposition \ref{prop:degree}). On the contrary, in the subcritical case, Proposition \ref{prop:Sharpness} does not follow from Theorem \ref{th:dualityperco}. There, we also need to prove the exponential decay for the perimeter of the percolation cluster (see Lemma \ref{lem:ExponentialTailPerimeter}), from which Proposition \ref{prop:Sharpness} and Theorem \ref{th:dualityperco} both stem.\end{remark}

\paragraph{Context.} This paper takes place in the framework of percolation on random planar maps, which has been the subject of extensive work. The first model to be considered was site percolation on the so-called \textit{Uniform Infinite Planar Triangulation} ($\textsf{UIPT}$), for which Angel determined the percolation threshold in \cite{angel_growth_2003}. Later on, he also dealt with the half-plane analog of the $\textsf{UIPT}$ \cite{angel_scaling_2004}. This work was then extended to bond and face percolation models and to quadrangulations of the half-plane \cite{angel_percolations_2015} and of the plane \cite{menard_percolation_2014}. The case of site percolation on quadrangulation, more delicate, has also been studied \cite{bjornberg_site_2015,richier_universal_2015}. Furthermore, many properties of the percolation models have been investigated, like critical exponents \cite{angel_percolations_2015,gorny_geometry_2017}, crossing probabilities \cite{richier_universal_2015} or the \textit{Incipient Infinite Cluster} \cite{richier_incipient_2017}. The common thread of these papers is the so-called \textit{peeling process}, that first appeared in the pioneer work of Watabiki \cite{watabiki_construction_1995} and was made rigorous by Angel in \cite{angel_growth_2003}. The peeling process proved to be very effective to study random planar maps, in particular the percolation models on them. When introduced by Angel, the rough idea of the peeling process was to reveal the map face by face. More recently, Budd introduced in \cite{budd_peeling_2016} a variant called the \textit{lazy peeling process}, that reveals instead the map \textit{edge by edge} and allows to treat all Boltzmann maps in a unified way \cite{curien_peeling_2016}. Note that percolation on random planar maps have been investigated by means of other techniques, like combinatorial decompositions \cite{curien_percolation_2014} and analytic combinatorics \cite{bernardi_boltzmann_2017}. 

\medskip Our approach in this paper builds upon and unifies those of \cite{angel_percolations_2015} and of \cite{bernardi_boltzmann_2017}. More precisely, as in   \cite{bernardi_boltzmann_2017} we use the Boltzmann point of view --in particular the notion of criticality for Boltzmann maps-- but replace the analytic combinatorics part by new probabilistic estimates using the peeling process. However, although our result encompasses a large class of maps, namely bond percolation on bipartite Boltzmann maps, it does not recover those of \cite{bernardi_boltzmann_2017} which were obtained in the case of (bond and site) percolation on triangulations (where the maps are non-bipartite). We believe that our methods extend to the non-bipartite case, at the cost of technical difficulties.

As mentioned above, the key tool is to use a new peeling process which is tailored to bond percolation exploration. This process is close in spirit to those of \cite{angel_growth_2003} or \cite{angel_percolations_2015} but slightly more delicate here. The strategy of our proof is based on the study of \textit{cut-edges} of percolation clusters, that disconnect the cluster when removed, see Figure~\ref{fig:Cut}. By computing the probability that the root edge is a cut-edge of the percolation cluster in two different ways, we are able to determine the partition function of the cluster, which is enough to characterize discrete stable maps as we shall prove. We then transpose the problem on \textit{Infinite Boltzmann Half-Planar Maps}, in which the exploration process is described in terms of a random walk with increments in the domain of attraction of a stable law, for which the probabilistic estimates are routine. \medskip 

The paper is organized as follows. In Section \ref{sec:DiscreteStableMaps}, we start with fundamental definitions around Boltzmann maps, set up some tools that we need, and then give a clear account of the possible classifications of Boltzmann maps (Propositions \ref{prop:TypeSequences} and \ref{prop:TypeSequences2})  that we prove to be all equivalent to each other. Section \ref{sec:BondPercolation} is devoted to background on the bond percolation model on Boltzmann maps, where we define from scratch the peeling processes that we use throughout the paper. We then turn in Section \ref{sec:CutEdges} to the technical core of the paper, which is to estimate the probability that the root edge is a cut-edge of the percolation cluster in the half-planar case. This allows to establish Theorem \ref{th:dualityperco} in the cases of critical and supercritical percolation. Finally, we deal with subcritical percolation in Section \ref{sec:Subcritical}, where we prove Theorem \ref{th:dualityperco} and Proposition \ref{prop:Sharpness} by using a peeling process defined on $\q$-Boltzmann maps instead of their half-planar version. 

\bigskip

\noindent\textbf{Notation.} Throughout the paper, given two sequences of real numbers $(x_n : n \in \N)$ and $(y_n : n \in \N)$, we write
\[x_n \underset{n \rightarrow \infty}{\approx} y_n\] if there exists an (eventually positive) slowly varying function $L$ such that $x_n=L(n)y_n$ for every $n \geq 0$. Recall that a function $L : \R_+ \rightarrow \R_+$ is slowly varying (at infinity) if for every $\lambda>0$ we have $L(\lambda x)/L(x) \rightarrow 1$ as $x \rightarrow \infty$.

We may also use the notation $x_n \propto y_n$ if there exists a constant $C>0$ (that does not depend on $n$) such that $x_n=C y_n$ for every $n\geq 0$. 

\bigskip

\noindent \textbf{Acknowledgements.} This work was supported by the grants ANR-15-CE40-0013 (ANR Liouville), ANR-14-CE25-0014 (ANR GRAAL) and the ERC GeoBrown. We warmly thank Grégory Miermont for interesting discussions.

\section{Boltzmann maps}\label{sec:DiscreteStableMaps}

This section is devoted to Boltzmann distributions and their properties. The goals are to set up tools for the remainder of the paper, as well as to connect the several possible definitions of the type of a Boltzmann map that are scattered over the literature. There are three commonly used classifications of weight sequences and Boltzmann maps: one uses the Bouttier--Di Francesco--Guitter bijection between planar maps and well-labeled trees \cite{bouttier_planar_2004}, another deals with the partition function of maps with a boundary, and the last one is based on the peeling process \cite{budd_peeling_2016}. We show that these definitions are equivalent in Proposition \ref{prop:TypeSequences} and \ref{prop:TypeSequences2}.

\smallskip

\subsection{Maps} A \textit{planar map} is a proper embedding of a finite connected graph in the two-dimensional sphere $\mathbb{S}^2$, viewed up to orientation-preserving homeomorphisms. We always consider rooted maps, i.e., with a distinguished oriented edge $e_*$ called the root edge. The faces of a map are the connected components of the complement of the embedding of the edges, and the degree $\deg(f)$ of the face $f$ is the number of oriented edges incident to this face. For technical reasons, we restrict ourselves to bipartite maps, in which all the faces have even degree. All the maps we consider are planar, rooted and bipartite so that we may simply call such an object a map. A generic map is usually denoted by $\m$, and we use the notation $\Vm$, $\Em$ and $\Fm$ for the sets of vertices, edges and faces of $\m$. The set of maps is denoted by $\M$. We may also consider \textit{pointed} maps, which have a marked vertex and whose set is denoted by $\M^\bullet$.

We will also deal with maps \textit{with a boundary}, meaning that the \textit{root face} $f_*$ that lies on the right of the root edge is interpreted as an \textit{external face}, whose incident edges and vertices form the \textit{boundary} of the map (while the other vertices, edges and faces are called \textit{internal}). The boundary $\partial\m$ of a map $\m$ is called \textit{simple} if it is a cycle without self-intersections. The degree $\#\partial\m$ of the external face is the \textit{perimeter} of $\m$. The set of maps with perimeter $2k$ is denoted by $\M_{k}$. By convention, the map $\dagger$ made of a single vertex is the only element of~$\M_0$.
\subsection{Boltzmann distributions}\label{sec:BoltzmannDistributions} For every \textit{weight sequence} $\q=(q_k : k\in \N)$ of nonnegative real numbers, we define the \textit{Boltzmann weight} of a map $\m\in\M$ by
\begin{equation}\label{eqn:BoltzmannWeightBoundary}
	\wq(\m):=\prod_{f \in \Fm}q_{\frac{\deg(f)}{2}},
\end{equation} with the convention $\wq(\dagger)=1$. We say that $\q$ is admissible if the \textit{partition function} $\wq(\M)$ is finite. Surprisingly, this is equivalent to say that $\wq\left(\M^\bullet\right)$ is finite (see \cite[Proposition 4.1]{bernardi_boltzmann_2017} for a proof). We denote by $\Pq$ the probability measure on $\M$ associated to $\wq$, and call~$\q$-Boltzmann map a map with this distribution. We will use the following function introduced in~\cite{marckert_invariance_2007}:
\begin{equation}\label{eqn:fq}
	\fq(x):= \sum_{k=1}^{\infty}\binom{2k-1}{k-1}q_k x^{k-1}, \quad x\geq 0.
\end{equation} By \cite[Proposition 1]{marckert_invariance_2007}, $\q$ is admissible if and only if the equation $\fq(x)=1-1/x$ has a solution in $(0,\infty)$, and the smallest such solution is then denoted by $\Zq$. This number has a fairly nice interpretation since $
	\Zq =(\wq\left(\M^\bullet\right)+1)/2$. The equation $\fq(\Zq)= 1 - \frac{1}{\Zq}$ enables us to define the following probability distribution.
	\begin{definition}\label{def:JSLaw}
If $\q$ is an admissible weight sequence, we put
\[ \mu_\q(0)=1-\fq(\Zq) \quad \text{and} \quad  \mu_\q(k)=(\Zq)^{k-1}\binom{2k-1}{k-1}q_{k}, \quad k\in \N.\]
\end{definition}

We let $m_{\mu_\q}$ be the mean of the probability measure $\mu_\q$, which yields a first classification of weight sequences essentially due to \cite{marckert_invariance_2007,le_gall_scaling_2011,borot_recursive_2012}. This distribution pops-up if one studies (pointed) Boltzmann maps using bijections with so-called \textit{well-labeled trees}. Indeed, through the Bouttier--Di Francesco--Guitter bijection \cite{bouttier_planar_2004} together with the Janson--Stef\'ansson bijection \cite[Section 3]{janson_scaling_2015}, one obtains a Galton--Watson tree whose offspring distribution is precisely $\mu_{\q}$, see \cite[Proposition 7]{marckert_invariance_2007}, \cite[Appendix A]{janson_scaling_2015} and \cite[Lemma 2.2]{richier_limits_2017} for details. In particular, we always have $m_{\mu_\q}\leq 1$.

\begin{definition}\label{def:Classification}
	Let $\q$ be an admissible weight sequence. The $\q$-Boltzmann map $M_\q$ is called critical if $\mu_\q$ has mean $m_{\mu_\q}=1$, and subcritical otherwise.
	
Moreover, we say that $M_\q$ is discrete stable of parameter $\alpha\in(1,2]$ if $\mu_\q$ is critical and in the domain of attraction of a stable distribution with parameter $\alpha$.
\end{definition}

\begin{remark} Recall that $\mu_\q$ is in the domain of attraction of a stable distribution with parameter $\alpha\in (1,2)$ if \[\mu_\q\left([k,\infty)\right) \underset{k \rightarrow \infty}{\approx} k^{-\alpha},\] while $\mu_\q$ is in the domain of attraction of a Gaussian distribution (stable with parameter $2$) if the truncated variance
	\[V_\q(k):=\sum_{j=0}^k{j^2\mu_\q(j)}\] is slowly varying at infinity. By Karamata's Theorem \cite[Theorem 8.1.6]{bingham_regular_1989}, in both cases, this is equivalent to the existence of a slowly varying function $\ell_\q$ such that 
	\begin{equation}\label{eqn:DefBassinAttraction}
		\varphi_{ \mu_\q}(t)=1-t+\ell_\q\left(1/t\right)t^{\alpha} + o(t^{\alpha} \ell_\q\left(1/t\right)) \quad \text{as } t\rightarrow 0^+ \quad (\alpha \in (1,2]),
	\end{equation} where $\varphi_{ \mu_\q}$ is the Laplace transform of the probability measure $\mu_\q$.
\end{remark}

We emphasize that Definition \ref{def:Classification} is slightly more general than those of \cite{marckert_invariance_2007,le_gall_scaling_2011,borot_recursive_2012,le_gall_uniqueness_2013,budd_geometry_2017,bertoin_martingales_2016,curien_peeling_2016}, because we allow slowly varying corrections. See \cite[Remark 2.5]{richier_limits_2017} for details on the interpretation of these definitions in terms of $\q$-Boltzmann maps. Before moving to another characterization of criticality, let us state a result, which combined with Theorem \ref{th:dualityperco} will imply the critical and supercritical cases of Proposition \ref{prop:Sharpness}.

\begin{proposition} \label{prop:degree} Let $M_\q$ be a discrete stable map of parameter $\alpha \in (1,2]$. Then, its total number of vertices $ \# \mathrm{V}(M_{\q})$ satisfies
\[ \Pq\left( \# \mathrm{V}(M_{\q}) = n\right) \underset{n \rightarrow \infty}{\approx} n^{- \frac{2\alpha +1}{\alpha}}.\] 
\end{proposition}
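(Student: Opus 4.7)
My plan is to translate $\#V(M_\q)$ into a standard functional of a critical Galton--Watson tree, via the pointing identity together with the Bouttier--Di Francesco--Guitter (BDG) and Janson--Stef\'ansson (JS) bijections, and then to conclude with a stable local limit theorem.

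First, since any rooted map $\m$ with $\#V(\m)=n$ admits exactly $n$ pointings and each pointing carries the same Boltzmann weight $\wq(\m)$, the rooted-pointed and rooted measures are related by
\[\Pq\bigl(\#V(M_\q)=n\bigr)\;=\;\frac{\wq(\M^\bullet)}{n\,\wq(\M)}\,\Pq^\bullet\bigl(\#V(M_\q)=n\bigr).\]
Both partition functions are finite by admissibility, so it suffices to establish
\[\Pq^\bullet\bigl(\#V(M_\q)=n\bigr)\underset{n\to\infty}{\approx}n^{-1-1/\alpha},\]
since multiplying by $n^{-1}$ yields the announced exponent $n^{-2-1/\alpha}=n^{-(2\alpha+1)/\alpha}$.

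Next, I would invoke the BDG--JS encoding recalled just after Definition~\ref{def:JSLaw}: a rooted-pointed $\q$-Boltzmann map is coded by a critical Galton--Watson tree $\tau$ with offspring distribution $\mu_\q$, enriched with a uniform integer labelling, in such a way that the leaves of $\tau$ are in bijection with the non-marked vertices of the map. Thus $\#V(M_\q)=1+L_\tau$, where $L_\tau$ is the number of leaves of $\tau$, and the problem reduces to
\[\P(L_\tau=n)\underset{n\to\infty}{\approx}n^{-1-1/\alpha}\]
for a critical GW tree whose offspring distribution lies in the domain of attraction of a stable law of parameter $\alpha\in(1,2]$.

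Finally, I would derive this last estimate by combining a two-parameter version of the Otter--Dwass cycle lemma with a stable local limit theorem. Let $(S_k)_{k\geq 0}$ denote the centred random walk with i.i.d.\ increments distributed as $\xi-1$ with $\xi\sim\mu_\q$, and let $N_n^{(0)}$ count its $(-1)$-steps up to time $n$. The refined cycle lemma reads
\[\P\bigl(|\tau|=n,\,L_\tau=\ell\bigr)=\frac{1}{n}\,\P\bigl(S_n=-1,\,N_n^{(0)}=\ell\bigr).\]
The scalar Gnedenko local limit theorem applied under \eqref{eqn:DefBassinAttraction} gives $\P(S_n=-1)\approx n^{-1-1/\alpha}$, while the law of large numbers $N_n^{(0)}/n\to\mu_\q(0)$, sharpened by a joint local limit theorem for $(S_n,N_n^{(0)})$, concentrates the sum over~$n$ around $n\sim \ell/\mu_\q(0)$. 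Summing over $n$ then delivers the tail $\P(L_\tau=\ell)\approx\ell^{-1-1/\alpha}$, and the pointing identity concludes the proof. The main difficulty I anticipate is the clean propagation of the slowly varying factor $\ell_\q$ from \eqref{eqn:DefBassinAttraction} through the joint local limit theorem, which must be performed sharply enough to be absorbed into the single slowly varying function implicit in the $\approx$ notation; this is however a standard piece of bookkeeping, of the same type already used in the stable-map analyses cited in the paper.
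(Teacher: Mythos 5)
Your proposal follows essentially the same route as the paper's proof: the pointing identity supplies the factor $1/n$, the Bouttier--Di Francesco--Guitter and Janson--Stef\'ansson bijections reduce the vertex count of the pointed map to the leaf count of a $\GWm$ tree with offspring $\mu_\q$, and the final estimate on the distribution of the number of leaves closes the argument. The only place you diverge is in that last step: the paper simply cites Kortchemski \cite{kortchemski_invariance_2012}, whereas you sketch a direct derivation via a refined Otter--Dwass cycle lemma and a joint local limit theorem for the pair $(S_n,N^{(0)}_n)$ — which is in fact the proof strategy used in that reference, so there is no real difference in substance, only in whether the leaf estimate is quoted or re-proved.

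One small slip worth flagging: the Gnedenko local limit theorem under \eqref{eqn:DefBassinAttraction} gives $\P(S_n=-1)\approx n^{-1/\alpha}$, not $n^{-1-1/\alpha}$; the extra factor $n^{-1}$ only appears after applying the cycle lemma $\P(|\tau|=n)=\frac{1}{n}\P(S_n=-1)$. Your subsequent bookkeeping — the Gaussian LLT width $\sqrt{n}$ for $N_n^{(0)}$ compensating the sum over $n$ near $n\sim\ell/\mu_\q(0)$ — does deliver the correct exponent $\ell^{-1-1/\alpha}$, so the final answer is unaffected, but the intermediate statement as written is off by one power of $n$.
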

 
In this result, we implicitly restrict ourselves to values of $n$ for which $\Pq\left( \# \mathrm{V}(M_{\q}) = n\right)\neq 0$.

\begin{proof} We invoke the Bouttier--Di Francesco--Guitter bijection \cite{bouttier_planar_2004}, together with the Janson--Stef\'ansson bijection \cite[Section 3]{janson_scaling_2015} that we both mentioned. These allow to represent the number of vertices of a \textit{pointed} $\q$-Boltzmann map $M^\bullet_{\q}$ (chosen in $\M^\bullet$ with probability $\P_{\q}^\bullet$ proportional to $w_{\q}$) as the total number of \textit{leaves} of a Galton--Watson tree whose offspring distribution $\mu_{\q}$ (given in Definition \ref{def:JSLaw}) is critical and falls within the domain of attraction of a stable law with parameter $\alpha$. Combined with the results of \cite{kortchemski_invariance_2012}, we obtain for those $n \in \N$ for which the probability is non-zero that
\[\P_{\q}^\bullet\left( \#\mathrm{V}(M^\bullet_{\q}) = n \right) \underset{n \rightarrow \infty}{\approx} n^{-\frac{1}{\alpha}-1}.\] But we also have 
\[\P_{\q}\left( \#\mathrm{V}(M_{\q}) = n \right) \propto \frac{1}{n} \P_{\q}^\bullet\left( \#\mathrm{V}(M^\bullet_{\q})= n \right),\] and the proof follows. \end{proof}

\subsection{Boltzmann maps with a boundary}\label{sec:BoltzmannMapsBoundary} Let us now introduce the partition functions for maps with a fixed perimeter,
\begin{equation}\label{eqn:PartitionFunctionFk}
	W_\q^{(k)}:=\frac{1}{q_k}\sum_{\m\in\M_{k}}\wq(\m), \quad k\in \Z_{\geq 0},
\end{equation} where the factor $1/q_k$ stands for the fact that the root face receives no weight. These quantities are all finite when $\q$ is admissible. The distribution of $\q$-Boltzmann maps with perimeter $2k$ is then defined by 

\begin{equation}\label{eqn:BoltzmannBoundary}
	\Pq^{(k)}(\m):=\frac{\mathbf{1}_{\{\m \in \M_k \}}\wq(\m)}{q_kW_\q^{(k)}}, \quad \m\in\M, \ k\in \Z_{\geq 0}.
\end{equation} We now recall the asymptotics of the partition function $W_\q^{(k)}$. Following \cite{borot_recursive_2012} (see also \cite[Section 5.1]{curien_peeling_2016} or \cite[Section 2.2]{richier_limits_2017}), the asymptotics of the partition function $W_\q^{(k)}$ are given by 

\begin{equation*}
\begin{array}{cccc}
	W_\q^{(k)} &\underset{k\rightarrow \infty}{\sim}&\displaystyle\frac{C_\q \rq^{-k}}{k^{\frac{3}{2}}} & (M_\q \ \textup{subcritical}) \\
	\\
W_\q^{(k)} &\underset{k\rightarrow \infty}{\approx}& \displaystyle\frac{\rq^{-k}}{k^{\alpha+\frac{1}{2}}} & (M_\q \ \textup{discrete stable with parameter }\alpha \in(1,2])
\end{array}
\end{equation*} where $\rq:=(4\Zq)^{-1}$ and $C_\q$ is a positive constant. Note that for $\alpha=2$, our more general definition requires a new proof that we provide in Proposition \ref{prop:TypeSequences2}. The exponent $a:=\alpha+1/2$ dictates the polynomial behaviour of the partition function. Thus, we borrow the notation of~\cite{curien_peeling_2016} for weight sequences.

\begin{notation}
An admissible weight sequence $\q$ is of type $a=3/2$ if the $\q$-Boltzmann map $M_\q$ is subcritical, and of type $a\in(3/2,5/2]$ if $M_\q$ is discrete stable with parameter $\alpha=a-1/2$.
\end{notation}

It turns out that the asymptotics of the partition function can be used as an equivalent definition for the type of a weight sequence, as we will see in Propositions \ref{prop:TypeSequences} and \ref{prop:TypeSequences2}.

\subsection{Peeling process of $\q$-Boltzmann maps}\label{sec:PeelingqBoltzmann}

We start by reviewing the lazy peeling process of $\q$-Boltzmann maps, first introduced in \cite[Section 3.1]{budd_peeling_2016}. We refer to \cite[Chapter 3]{curien_peeling_2016} for a detailed presentation. 

Let us consider a finite map $\m\in\M$. We call \textit{peeling exploration} of $\m$ an increasing sequence $\e_0 \subset \e_1 \subset \cdots \subset \m$ of sub-maps of $\m$ that contain the root edge. We also require that the map $\e_i$ has a marked simple face, called the \textit{hole}, such that when filled-in with the proper (and unique) map with a general boundary, we recover $\m$. A peeling exploration of the map $\m$ is driven by an \textit{algorithm} $\A$ that associates to every map $\e_i$ an edge on the boundary of its hole, or a cemetery state $\ddag$. The cemetery state is interpreted as the end of the peeling exploration (in particular, if the hole has perimeter zero, $\A$ must take the value~$\ddag$). We denote by $\theta$ be the lifetime of the peeling exploration. 

 The peeling exploration of $\m$ driven by $\A$ is the sequence of sub-maps $\e_0 \subset \cdots \subset \e_{\theta} \subset \m$ defined as follows. First, the map $\e_0$ is made of a simple face whose degree is that of the root face $f_*$ of $\m$. Then given $\e_i$, the map $\e_{i+1}$ is obtained by revealing the status of the face $f_i$ incident to the left of $\A(\e_i)$ in $\m$, which we call \textit{peeling} the edge $\A(\e_i)$. Two situations may occur, that are illustrated in Figure \ref{fig:PeelingFinite}:

\begin{enumerate}
	\item The face $f_i$ does not belong to $\e_i$, and has degree $2k$ $(k\in \N)$. In this case, $\e_{i+1}$ is obtained from $\e_i$ by adding this new face, without performing any identification of its edges (that is, we simply add a polygon of degree $2k$ incident to $\A(\e_i)$). We denote this event by $\Cs_k$.
	\item The face $f_i$ belongs to $\e_i$. In this case, $\e_{i+1}$ is obtained from $\e_i$ by identifying the two half-edges of the hole that correspond to $\A(\e_i)$. This creates two holes, whose perimeters are denoted by $2j$ and $2k$ $(j, k \in \Z_{\geq 0})$, from left to right. In this case, we fill-in one of the holes (determined by the algorithm $\mathcal{A}$) using the proper map with a boundary, so that $ \e_{i+1}$ has one hole. We denote this event by $\Gs_{j,k}$.
\end{enumerate}
This peeling process is called a \textit{filled-in} exploration in the terminology of \cite{budd_peeling_2016,curien_peeling_2016}.

\begin{figure}[ht]
	\centering
	\includegraphics[scale=1.1]{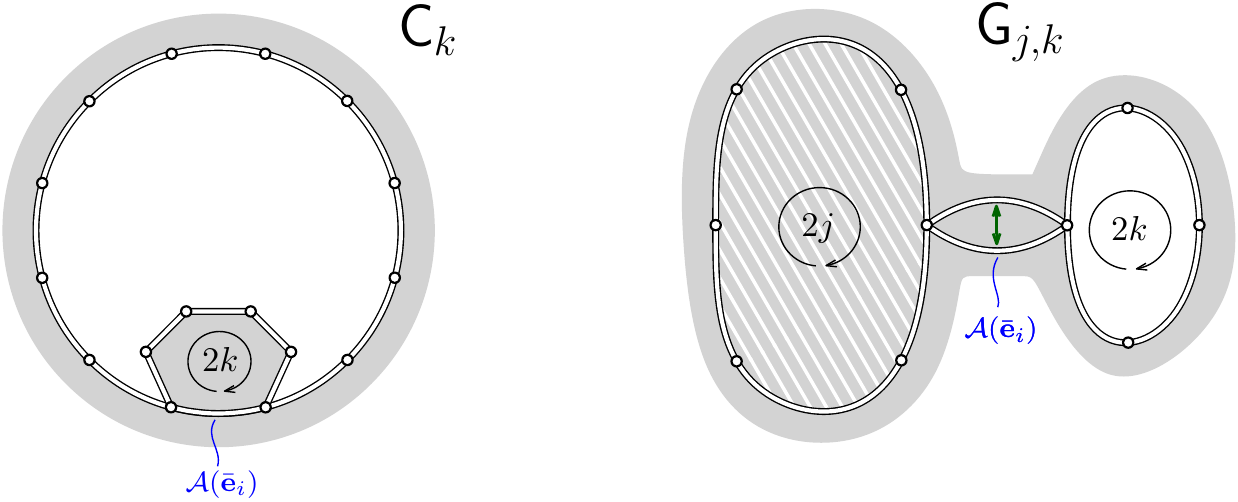}
	\caption{The peeling events $\Cs_k$ and $\Gs_{j,k}$. The explored part of the map is in gray and the unexplored part in white. The filled-in hole is the hatched area.}
	\label{fig:PeelingFinite}
	\end{figure}

\smallskip

We now let $\q$ be an admissible weight sequence, and give the distribution of the steps of the peeling process $(\e_i : 0 \leq i \leq \theta)$ of a $\q$-Boltzmann map (that is, under $\Pq$). This distribution does not depend on the peeling algorithm $\A$, that can even be random as long as it does not use information on the unrevealed parts of the map. For every $0\leq i \leq \theta$, we let $P_i$ be the half-perimeter of the hole of the map $\e_i$. Then conditionally on the map $\e_i$ (and on the event $\{\theta>i\}$) we have
\begin{equation}\label{eqn:LawPeelingFinite}
\Pq\left(\Cs_k \mid \e_i \right)=\p^{(P_i)}(k) \quad (k\geq 1) \qquad \text{and} \qquad \Pq\left(\Gs_{j,k} \mid \e_i \right)= \p^{(P_i)}(j,k) \quad(j,k\geq 0), 
\end{equation} where
\[\p^{(l)}(k):=q_k \displaystyle\frac{W_\q^{(l+k-1)}}{W_\q^{(l)}} \ \ (l\geq 1, \ k \geq 1) \quad \text{and} \quad \p^{(l)}(j,k):=\displaystyle\frac{W_\q^{(j)}W_\q^{(k)}}{W_\q^{(l)}}\mathbf{1}_{j+k+1=l} \ \ (l\geq 1, \ j,k \geq 0) \] Finally, at each time, the maps that fill in the holes are $\q$-Boltzmann maps with the proper perimeter, independent of the past exploration (this is sometimes referred to as the \textit{spatial Markov property}). We refer to \cite[Proposition 7]{curien_peeling_2016} for detailed proofs.

Note that the probabilities in \eqref{eqn:LawPeelingFinite} depend on $\e_i$ only through the perimeter of the hole. Thus, by \cite[Lemma 6]{curien_peeling_2016}, when the half-perimeter $P_{i}$ tends to infinity, these probabilities converge towards limiting transition probabilities $\p^{(\infty)}(k)= \nu_{\q}(k-1)$ and $\p^{(\infty)}(\infty,k) = \p^{(\infty)}(k,\infty) = \frac{1}{2} \nu_{\q}(-k-1)$, where 
\begin{equation}\label{eqn:DefNu}
\nu_\q(k)=
\left\lbrace
\begin{array}{cc}
q_{k+1}\rq^{-k}  & \mbox{ if } k\geq 0\\
2W_\q^{(-k-1)}\rq^{-k} & \mbox{ if } k\leq -1\\
\end{array}\right..
\end{equation} We refer to \cite[Lemma 9]{curien_peeling_2016} for the proof that this is indeed a probability measure on $ \mathbb{Z}$.  The quantities $ \p^{(\infty)}$ will be interpreted as the transition probabilities for the peeling process on the $\q$-Boltzmann map of the half-plane (see Section \ref{sec:IBHPM}). 

\subsection{Classification of weight sequences} In the last sections we have defined the offspring distribution $\mu_{\q}$ appearing when dealing with bijections with labeled trees, the partition functions $W_{\q}$ enumerating maps with a boundary and the probability measure $\nu_{\q}$ connected to the peeling process. We now give equivalent definitions of the type of a weight sequence $\q$ using those notions, starting with the criticality property. \medskip 

The following result is a direct consequence of \cite[Proposition 4]{budd_peeling_2016} and \cite[Proposition 4.3]{bernardi_boltzmann_2017}. Recall that a random walk $(X_i : i\geq 0)$ \textit{drifts} to $\infty$ (resp.\ to $-\infty$) if $P(X_i\geq X_0, \ \forall i\geq 0)>0$ (resp.\ if $P(X_i\leq X_0, \ \forall i\geq 0)>0$). Moreover, if a (non-constant) random walk drifts neither to $\infty$ nor to $-\infty$, it is said to \textit{oscillate}.

\begin{proposition}\label{prop:TypeSequences} Let $\q$ be an admissible weight sequence and $M_\q$ be a $\q$-Boltzmann map. Then the following statements are equivalent.
\begin{enumerate}
	\item The probability measure $\mu_\q$ has mean $m_{\mu_\q}<1$ ($M_\q$ is subcritical, $\q$ is of type $3/2$).
	\item There exists constants $C>0$ and $r>0$ such that $W_\q^{(k)}\sim C r^{-k}k^{-3/2}$ as $k\rightarrow \infty$.
	\item The random walk $X_\q$ with steps distributed as $\nu_\q$ drifts to $-\infty$.
	\item The quantity $\E_\q[\#\mathrm{V}(M_\q)^2]$ is finite.
\end{enumerate} Moreover, if none of these conditions is satisfied, $\mu_\q$ is critical and $X_\q$ oscillates.
\end{proposition}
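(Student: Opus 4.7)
The plan is to prove the three equivalences $(1)\Leftrightarrow(4)$, $(1)\Leftrightarrow(3)$ and $(1)\Leftrightarrow(2)$ separately, and then deduce the concluding dichotomy from the standing inequality $m_{\mu_\q}\leq 1$ recalled after Definition~\ref{def:JSLaw}. For $(1)\Leftrightarrow(4)$, I would invoke the Bouttier--Di Francesco--Guitter and Janson--Stef\'ansson bijections exactly as in the proof of Proposition~\ref{prop:degree}, which identify $\#\mathrm{V}(M_\q^\bullet)$ with the number of leaves of a Galton--Watson tree $T$ with offspring distribution $\mu_\q$. The pointed-unpointed relation $\Pq(\#\mathrm{V}(M_\q)=n)\propto n^{-1}\Pq^\bullet(\#\mathrm{V}(M_\q^\bullet)=n)$ then yields $\mathbb{E}_\q\bigl[\#\mathrm{V}(M_\q)^2\bigr]\propto\mathbb{E}_\q^\bullet\bigl[\#\mathrm{V}(M_\q^\bullet)\bigr]=\mu_\q(0)\,\mathbb{E}[|T|]$, which equals $\mu_\q(0)/(1-m_{\mu_\q})$ when $m_{\mu_\q}<1$ and is infinite when $m_{\mu_\q}=1$.

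For $(1)\Leftrightarrow(3)$, the key step is a direct computation of the mean of $\nu_\q$ from \eqref{eqn:DefNu}. Splitting $\sum_k k\,\nu_\q(k)$ according to the sign of $k$, the positive half is read off $\fq(\Zq)$ and $\fq'(\Zq)$ using the definition \eqref{eqn:fq}, while the negative half involves the partition functions $W_\q^{(k)}$ and is handled via a one-step Tutte/peeling recursion on the $W_\q^{(k)}$. After simplification using $\fq(\Zq)=1-1/\Zq$, these two contributions combine into a quantity whose sign is that of $m_{\mu_\q}-1$. The equivalence then follows from the classical dichotomy for one-dimensional random walks: mean strictly negative gives drift to $-\infty$, and mean zero gives oscillation. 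For $(1)\Leftrightarrow(2)$, I would use the Kemperman/Dwass cycle-lemma identity expressing $W_\q^{(k)}$ in terms of the first-passage probability $\mathbb{P}(X_\q\text{ first hits }-k\text{ at time }k)$; under subcriticality, $X_\q$ drifts to $-\infty$ with light enough positive tails (admissibility forces $\fq(\Zq)<\infty$), and a local limit theorem then yields the $k^{-3/2}$ correction. The converse direction is standard, as the $k^{-3/2}$ tail of the first-passage time at $-k$ is only compatible with negative drift.

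The main obstacle I anticipate is the mean computation in $(1)\Leftrightarrow(3)$: the positive and negative contributions to the mean of $\nu_\q$ are governed by different data (namely $\fq$ versus the full sequence $W_\q^{(k)}$), and their clean cancellation into a multiple of $m_{\mu_\q}-1$ relies on a Tutte-type identity that must be derived or imported. Once this computation is in place, the concluding sentence is immediate: failure of~(1) forces $m_{\mu_\q}=1$ (criticality of $\mu_\q$), while failure of~(3) combined with $\mathbb{E}[X_\q]\leq 0$ coming from the same computation rules out a drift to $+\infty$ and leaves oscillation as the only possibility.
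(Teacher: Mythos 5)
The paper offers no proof of this proposition: it is quoted directly from \cite[Proposition~4]{budd_peeling_2016} and \cite[Proposition~4.3]{bernardi_boltzmann_2017}, so I assess your proposal on its own terms. Your $(1)\Leftrightarrow(4)$ argument is sound and matches the identification used in the proof of Proposition~\ref{prop:degree}: $\#\mathrm{V}(M_\q^\bullet)$ is the number of leaves of a $\mu_\q$-Galton--Watson tree $T$, the pointing relation converts $\E_\q[\#\mathrm{V}(M_\q)^2]$ into $\E_\q^\bullet[\#\mathrm{V}(M_\q^\bullet)]$, and $\E[\text{leaves}]=\mu_\q(0)\,\E[|T|]$ is finite iff $m_{\mu_\q}<1$ (since $m_{\mu_\q}\leq1$ always).

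The gap is in $(1)\Leftrightarrow(3)$, and the remedy you propose does not close it. Your plan rests on the mean $\sum_k k\,\nu_\q(k)$ being a well-defined number whose sign can be read off. But $\nu_\q$ generically has no mean. On the negative side $\nu_\q(-j-1)=2W_\q^{(j)}\rq^{j+1}$, so granting the conclusion of item~(2), $\nu_\q(-j-1)\approx j^{-3/2}$ and $\sum_j j\,\nu_\q(-j-1)=\infty$ even for subcritical $\q$. On the positive side $\nu_\q(k)=\frac{4^k}{\binom{2k-1}{k-1}}\mu_\q(k+1)\propto \sqrt{k}\,\mu_\q(k+1)$, so $\E[\nu_\q^+]$ is comparable to $\sum_k k^{3/2}\mu_\q(k)$, a moment that $m_{\mu_\q}\leq1$ does not control; a subcritical $\mu_\q$ with tail $\approx k^{-\beta}$, $2<\beta<5/2$, has $\sum_k k^{3/2}\mu_\q(k)=\infty$. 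When both $\E[\Delta X^+]$ and $\E[\Delta X^-]$ are infinite, the drift--oscillation dichotomy is not governed by the sign of a mean: one needs an Erickson-type criterion or, as in Budd's actual argument, the fact that a specific function (in the spirit of $h^\uparrow(l)=2l\,4^{-l}\binom{2l}{l}$) is $\nu_\q$-harmonic on $\Z_{>0}$ in the critical case and strictly superharmonic otherwise. That is the ingredient your ``Tutte recursion'' step would have to produce, and it is genuinely more than a mean computation. The same issue undermines your concluding sentence, which again invokes $\E[X_\q]\leq0$. Finally, for $(1)\Leftrightarrow(2)$ the identity $\nu_\q(-k-1)=2W_\q^{(k)}\rq^{k+1}$ shows that item~(2) is literally a tail estimate for $\nu_\q$, not a first-passage probability of $X_\q$, so the cycle lemma does not apply in the form you describe; the route actually taken in this paper for the analogous critical statement (Proposition~\ref{prop:TypeSequences2}) is the integral representation~\eqref{eqn:IntegralFk} together with a Tauberian theorem, and the subcritical case should be handled the same way.
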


The next result deals with the characterizations of the type of a critical weight sequence.

\begin{proposition}\label{prop:TypeSequences2}Let $\q$ be an admissible weight sequence such that the $\q$-Boltzmann map $M_\q$ is critical, and fix $a \in(3/2,5/2]$. Then the following statements are equivalent.

\begin{enumerate}
	\item The probability measure $\mu_\q$ is in the domain of attraction of a stable law with parameter $a-1/2$ ($M_\q$ is discrete stable with parameter $a-1/2$, $\q$ is of type $a$).
	\item The partition function $W_\q$ satisfies $W_\q^{(k)} \approx \rq^{-k}k^{-a}$ as $k\rightarrow \infty$.
	\item The probability measure $\nu_\q$ satisfies $\nu_\q([k,\infty))\approx k^{1-a}$ as $k\rightarrow \infty$.
	\item The probability measure $\nu_\q$ satisfies $\nu_\q((-\infty,k])\approx k^{1-a}$ as $k\rightarrow -\infty$.
\end{enumerate}
\end{proposition}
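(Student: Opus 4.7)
The plan is to establish the four equivalences by splitting them into two elementary translations, (4) $\Leftrightarrow$ (2) and (1) $\Leftrightarrow$ (3), and then bridging the two halves via the Bouttier--Di Francesco--Guitter bijection.

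For (4) $\Leftrightarrow$ (2), formula \eqref{eqn:DefNu} gives $\nu_\q(-j-1)=2\rq\,(W_\q^{(j)}\rq^j)$ for $j\geq 0$, so that
$$\nu_\q\bigl((-\infty,-j-1]\bigr)=2\rq\sum_{i\geq j}W_\q^{(i)}\rq^i.$$
Karamata's Tauberian/Abelian correspondence identifies the pointwise asymptotic $W_\q^{(j)}\rq^j\approx j^{-a}$ with the tail asymptotic $\nu_\q((-\infty,-j-1])\approx j^{1-a}$, once one knows that $W_\q^{(i)}\rq^i$ is regularly decreasing, which can be read off the peeling recursion. Next, for (1) $\Leftrightarrow$ (3), Definition \ref{def:JSLaw} combined with Stirling's formula $\binom{2k-1}{k-1}\sim 2\cdot 4^{k-1}/\sqrt{\pi k}$ and $\rq=(4\Zq)^{-1}$ yields
$$\mu_\q(k)\sim\frac{2\,\nu_\q(k-1)}{\sqrt{\pi k}}\qquad(k\to\infty),$$
from which Karamata converts the tail condition (3) into $\mu_\q([k,\infty))\approx k^{-\alpha}$ with $\alpha=a-1/2$, the tail characterization of the stable-$\alpha$ domain of attraction associated with \eqref{eqn:DefBassinAttraction}.

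The core of the proof is the cross-implication (1) $\Leftrightarrow$ (2). For (1) $\Rightarrow$ (2), one follows the route of Proposition \ref{prop:degree}: via the Bouttier--Di Francesco--Guitter bijection composed with the Janson--Stef\'ansson bijection, $q_k W_\q^{(k)}$ is proportional to the partition function of Galton--Watson forests of $k$ trees with offspring distribution $\mu_\q$ and prescribed total size. Kemperman's formula rewrites this as a hitting probability for a centered random walk with step law derived from $\mu_\q$, and the stable local limit theorem (as used in \cite{kortchemski_invariance_2012}) converts the domain-of-attraction hypothesis into the precise asymptotic $W_\q^{(k)}\approx \rq^{-k}k^{-a}$. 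The converse (2) $\Rightarrow$ (1) is obtained either by inverting this identity or, equivalently, by combining the already-established (2) $\Leftrightarrow$ (4) with the Tutte-type peeling recursion
$$W_\q^{(l)}=\sum_{k\geq 1}q_k W_\q^{(l+k-1)}+\sum_{j+k=l-1}W_\q^{(j)}W_\q^{(k)}$$
(valid since $\nu_\q$ sums to one, cf.\ Section \ref{sec:PeelingqBoltzmann}) to extract the tail of $q_k$ from that of $W_\q^{(l)}$.

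The main obstacle is handling the boundary case $\alpha=2$ under only the hypothesis \eqref{eqn:DefBassinAttraction} of Gaussian domain of attraction with possibly slowly varying truncated variance, rather than the finite-variance setting in which the classical local limit theorem is usually stated. This requires a careful bookkeeping of slowly varying corrections throughout Kemperman's formula and the local limit theorem, and is precisely the ``new proof'' for $\alpha=2$ announced in the paragraph preceding the statement.
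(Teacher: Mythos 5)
Your decomposition matches the paper for the two easy translations: (4) $\Leftrightarrow$ (2) via the identity $\nu_\q(-j-1)=2\rq^{\,j+1}W_\q^{(j)}$ plus Karamata, and (1) $\Leftrightarrow$ (3) via $\mu_\q(k)\sim\frac{2}{\sqrt{\pi k}}\nu_\q(k-1)$ plus a summation by parts; these agree with the paper. The core implication (1) $\Leftrightarrow$ (2), however, is handled completely differently in the paper, and your proposed route has genuine gaps. The paper's engine is the integral representation \eqref{eqn:IntegralFk}--\eqref{eqn:IntegralFk2}, which expresses $W_\q^{(k)}\rq^k$ as $4^{-k}\binom{2k}{k}\int_0^\infty e^{-kt}\,U(\mathrm{d}t)$ with $U(t)=-\Zq e^{-t}+\Zq\,\varphi_{\mu_\q}(t)$. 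An Abelian--Tauberian theorem for Laplace transforms (Bingham--Goldie--Teugels Thm.\ 1.7.1$'$) then translates condition \eqref{eqn:DefBassinAttraction}, a statement about $\varphi_{\mu_\q}(t)$ as $t\to 0^+$, directly into the asymptotic for $W_\q^{(k)}$ \emph{in both directions at once} and with no extra work at $\alpha=2$, since \eqref{eqn:DefBassinAttraction} is itself phrased in terms of the Laplace transform.

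Your route for (1) $\Leftrightarrow$ (2) has two concrete problems. First, the Bouttier--Di Francesco--Guitter $+$ Janson--Stef\'ansson encoding is of \emph{pointed} maps; the boundary partition function that is genuinely a Galton--Watson forest object is the pointed one, $W_\q^{(k,\bullet)}=(4\rq)^{-k}\binom{2k}{k}$, which carries no $k^{-a}$ correction at all. The $k^{-a}$ lives in the un-pointed $W_\q^{(k)}$, which differs from $W_\q^{(k,\bullet)}$ by a $1/(\#\text{vertices})$ weight per map — precisely what the $u$-integral in \eqref{eqn:IntegralFk} packages, and what your appeal to Kemperman's formula and a local limit theorem silently ignores. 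Second, your converse (2) $\Rightarrow$ (1) is a hand-wave: the peeling recursion does not isolate the tail of $q_k$ in any obvious way, and at $a=5/2$ (i.e.\ $\alpha=2$) there may be no power-law tail to isolate, since the Gaussian domain of attraction for $\mu_\q$ is characterized by slow variation of the truncated variance $V_\q$ and admits, e.g., geometrically small $q_k$. This is exactly the obstacle you flag at the end, but your proposal does not surmount it; the paper's Tauberian argument does, because it never requires a tail condition on $q_k$.
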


\begin{proof} The implication $\textit{1}$$\Rightarrow$$\textit{2}$ has already been established in \cite[Equation (15)]{richier_limits_2017}, except for the case $a=5/2$ where we use here a more general definition. The weight sequence $\q$ being admissible, we have the following expression for the partition function \cite{borot_recursive_2012,curien_peeling_2016}
\begin{equation}\label{eqn:IntegralFk}
	W_\q^{(k)}=\binom{2k}{k} \int_0^1 \left(uZ_{\q(u)}\right)^k \mathrm{d}u, \quad k\in \Z_{\geq 0},
\end{equation} where $\q(u):=(u^{k-1}\q_k : k\in \N)$. Following \cite[Section 2.2]{richier_limits_2017}, we can rewrite the integral as \begin{equation}\label{eqn:IntegralFk2}\int_0^1 \left(uZ_{\q(u)}\right)^k \mathrm{d}u=(\Zq)^k \int_0^\infty e^{-kt}U(\mathrm{d}t),\end{equation} where \[U(t):=-\Zq e^{-t} + \Zq \varphi_{\mu_{\q}}(t) , \quad t\geq 0,\] and $\varphi_{\mu_\q}$ is the Laplace transform of the probability measure $\mu_\q$. By applying \cite[Theorem 1.7.1']{bingham_regular_1989} to the function $U$ in \eqref{eqn:IntegralFk2} together with the remark \eqref{eqn:DefBassinAttraction} on the domain of attraction of stable laws, this ensures that $\textit{1}$$\Leftrightarrow$$\textit{2}$.

By the definition of $\mu_\q$ in Lemma \ref{def:JSLaw} and that of $\nu_\q$ in \eqref{eqn:DefNu}, we obtain that
\[\nu_\q(k)=\frac{4^k}{{2k-1\choose k-1}}\mu_\q(k+1), \quad k\in\Z_{\geq 0}.\] A summation by parts then establishes that $\textit{1}$$\Leftrightarrow$$\textit{3}$. Finally, by \eqref{eqn:DefNu}, we have $\nu_\q(k)=2W_\q^{(-k-1)}\rq^{-k}$ if $k\leq -1$ which shows that $\textit{2}$$\Leftrightarrow$$\textit{4}$ and concludes the proof.\end{proof}

\section{Bond percolation and explorations}\label{sec:BondPercolation}

In this section, we define the bond percolation model and the associated peeling exploration. The bond percolation model on a map $\m$ is defined by declaring each edge open (or black) with probability $p\in[0,1]$ and closed (or white) otherwise, independently for all edges. For the sake of clarity, we usually hide the parameter $p$ in the notation. When considering this percolation model, we implicitly work conditionally on the event that the root edge of the map is black (but the states of the other edges remain unknown). We will be interested in the black connected component of the source of the root edge, called the black percolation cluster of the origin. In what follows, we may simply speak of percolation cluster since there is no risk of confusion. Naturally, the root edge of the percolation cluster is that of the underlying map. 

We first prove the easy fact that the percolation cluster of a Boltzmann map is still Boltzmann distributed. We then turn to the description of our peeling exploration both in a finite and infinite setup.

\bigskip

\subsection{Percolation clusters are Boltzmann maps} Let $\q$ be an admissible weight sequence, and consider the percolation model with parameter $p\in[0,1]$ on a $\q$-Boltzmann map. We first establish that the percolation cluster is itself a Boltzmann map. We rely on a decomposition inspired by \cite{bernardi_boltzmann_2017}. This decomposition is based on \textit{islands}, that are maps with a simple boundary in which the boundary edges are black, and all the internal edges incident to the boundary are white. For every $k\in\N$, we let $\I_k$ be the set of islands with perimeter $2k$ and introduce the partition function

\begin{equation}\label{eqn:PartFctIslands}
	I_k(p):=\frac{1}{q_k}\sum_{\m\in\I_k}{p^{\#\mathrm{E}_\bullet(\m)}(1-p)^{\#\mathrm{E}_\circ(\m)}\wq(\m)},
\end{equation} where $\mathrm{E}_\bullet(\m)$ (resp.\ $\mathrm{E}_\circ(\m)$) stands for the set of black (resp.\ white) internal edges of $\m$. 

The fundamental observation is that a percolated map $\m$ can be uniquely decomposed into its percolation cluster $\c$ and a collection of islands $(\m_f : f\in\mathrm{F}(\c))$ associated to the faces of the cluster, and such that the perimeter of $\m_f$ equals $\deg(f)$ for every $f\in\mathrm{F}(\c)$. These islands are obtained by cutting the map $\m$ along the edges of the cluster, see \cite[Section 2.3]{bernardi_boltzmann_2017} for details and Figure \ref{fig:DecompositionB} for an illustration. For this decomposition to be unique, a rooting convention of the islands has to be chosen (any deterministic procedure is suitable). We obtain the following result.

\begin{proposition}\label{prop:LawCluster}
	The percolation cluster $\C$ of $M_\q$ is a $\qt$-Boltzmann map (conditioned to have at least one edge), where $\qt$ is defined by
	\begin{equation}\label{eqn:DefQt}
		\qt_k=p^kI_k(p), \quad k\in\N.
	\end{equation} In particular, the weight sequence $\qt$ is admissible.
\end{proposition}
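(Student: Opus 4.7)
The plan is to compute $\Pq(\C = \c \mid e_* \text{ open})$ explicitly via the island decomposition sketched just before the statement, and to verify that the resulting weight is proportional to $w_{\qt}(\c)$.

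First I would fix a map $\c$ with at least one edge (inheriting the root edge of $\m$) and regroup the percolated maps $(\m,\omega)$ whose cluster equals $\c$ according to the tuple of islands $(\m_f)_{f \in \mathrm{F}(\c)}$ attached to each face of $\c$. Once a deterministic rooting rule for islands is fixed, the map $(\m,\omega) \leftrightarrow (\c, (\m_f)_{f \in \mathrm{F}(\c)})$ is a bijection, and the weight
\[
\wq(\m)\, p^{\#\mathrm{E}_\bullet(\m,\omega)}(1-p)^{\#\mathrm{E}_\circ(\m,\omega)}
\]
splits cleanly along it. Indeed, the faces of $\m$ are precisely the internal faces of the islands, which yields
\[
\wq(\m) \;=\; \prod_{f \in \mathrm{F}(\c)} \frac{\wq(\m_f)}{q_{\deg(f)/2}};
\]
the non-cluster edges of $\m$ are precisely the internal edges of the islands, contributing $\prod_{f} p^{\#\mathrm{E}_\bullet(\m_f)}(1-p)^{\#\mathrm{E}_\circ(\m_f)}$; and the cluster edges, all declared open, contribute
\[
p^{\#\mathrm{E}(\c)} \;=\; \prod_{f \in \mathrm{F}(\c)} p^{\deg(f)/2}
\]
by the handshake relation $2\#\mathrm{E}(\c) = \sum_{f \in \mathrm{F}(\c)}\deg(f)$.

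Summing over the islands is then immediate: each face $f$ with $\deg(f) = 2k$ contributes the factor $p^{k} I_{k}(p)$, the $1/q_k$ normalization in the definition \eqref{eqn:PartFctIslands} of $I_k(p)$ absorbing exactly the $q_k$ left in the denominator of the split of $\wq(\m)$. Thus
\[
\sum_{(\m,\omega):\,\C = \c} \wq(\m)\, p^{\#\mathrm{E}_\bullet}(1-p)^{\#\mathrm{E}_\circ} \;=\; \prod_{f \in \mathrm{F}(\c)} p^{\deg(f)/2} I_{\deg(f)/2}(p) \;=\; w_{\qt}(\c),
\]
with $\qt_k = p^k I_k(p)$ as in \eqref{eqn:DefQt}. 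Dividing by $\Pq(e_* \text{ open})$ yields the claimed $\qt$-Boltzmann form of $\C$, conditional on having at least one edge. Admissibility of $\qt$ then drops out for free: summing the identity above over all $\c$ with at least one edge gives a left-hand side bounded by $\wq(\M) < \infty$, so $w_{\qt}(\M) < \infty$.

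The only real subtlety, and the one I expect to require actual care, is fixing the island rooting convention so that the correspondence $(\m,\omega) \leftrightarrow (\c, (\m_f)_{f \in \mathrm{F}(\c)})$ is genuinely a bijection with no hidden symmetry factor creeping into the counting; once this is in place, the remainder of the proof is pure bookkeeping on edges and faces.
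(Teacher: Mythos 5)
Your proof is correct and takes essentially the same route as the paper's: both rely on the island decomposition of the percolated map and the handshake identity $\#\mathrm{E}(\c)=\sum_{f}\tfrac{1}{2}\deg(f)$ to absorb the cluster-edge weights into the face weights $\qt_k=p^kI_k(p)$. The paper states the resulting proportionality directly, whereas you spell out the factorization of $\wq(\m)$ and the Bernoulli weights across islands; the bijection subtlety you flag is exactly the rooting-convention remark made in the paragraph preceding the proposition.
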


\begin{proof}The proof follows from the above decomposition, which yields \[\Pq(\C=\c)\propto p^{\#\mathrm{E}(\c)}\prod_{f\in \mathrm{F}(\c)}I_{\frac{\deg(f)}{2}}(p),\quad \c\in\M\backslash\{\dagger\}.\] By the identity $\#\mathrm{E}(\c)=\sum_{f\in \mathrm{F}(\c)}{\tfrac{1}{2}\deg(f)}$, this gives
	\[\Pq(\C=\c)\propto w_{\qt}(\c),\quad \c\in\M\backslash\{\dagger\},\] and thus the expected result. At the same time, this shows that $\qt$ is admissible.\end{proof}

\begin{figure}[ht]
	\centering
	\includegraphics[scale=1.4]{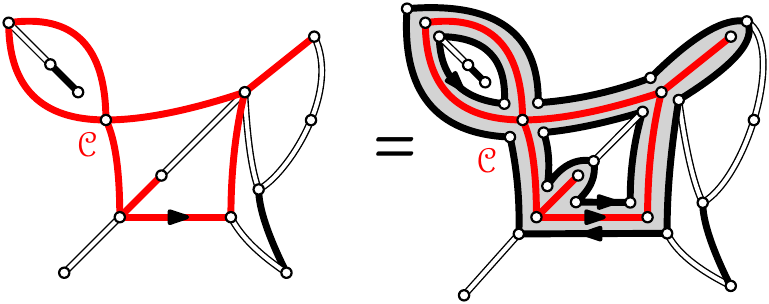}
	\caption{The decomposition of a map with perimeter $12$ into its percolation cluster $\C$ with perimeter $8$ (in red) and three islands.}
	\label{fig:DecompositionB}
	\end{figure}

	We now prove a result that will be useful in Section \ref{sec:RelateTh1Cut} to relate percolation on finite $\q$-Boltzmann maps to percolation on $\q$-Boltzmann maps of the half-plane.

When the map $\m$ and its percolation cluster $\c$ both have fixed perimeter, say $2l$ and $2m$ respectively, the above decomposition can still be defined. As before, every internal face $f$ of $\c$ is filled in with an island of perimeter $\deg(f)$. However, the root face of $\c$ plays a special role: it is filled in with an island of perimeter $2m$ such that the face incident to the left of the root edge has degree $2l$ (because of the prescribed perimeter of $\m$, see Figure \ref{fig:DecompositionB}). For every $l,m\in\N$, we let $\I_m^{(l)}$ be the set of such islands and denote by $I_m^{(l)}(p)$ the associated partition function, defined as in \eqref{eqn:PartFctIslands}. The following result extends Proposition \ref{prop:LawCluster} to Boltzmann maps with a boundary.

\begin{proposition}\label{prop:LawClusterB} Let $l,m\in\N$, and consider the percolation model with parameter $p\in[0,1]$ on a $\q$-Boltzmann map $M_\q^{(l)}$ with perimeter $2l$ (i.e., with law $\Pql$). Then under $\Pql(\cdot \mid \#\partial\C=2m)$, the percolation cluster $\C$ of $M_\q^{(l)}$ has law $\P_{\qt}^{(m)}$, where $\qt$ is defined by \eqref{eqn:DefQt}.
\end{proposition}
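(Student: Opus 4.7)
The plan is to mimic the island decomposition used in Proposition~\ref{prop:LawCluster}, now with both perimeters prescribed. First I would check that any $\m \in \M_l$ whose percolation cluster equals a given $\c \in \M_m$ is uniquely encoded by the triple $(\c, \m_{f_*}, (\m_f)_f)$, in which $\m_{f_*} \in \I_m^{(l)}$ fills the root face of $\c$ (its distinguished internal face of degree $2l$ being by construction the root face of $\m$), while each internal face $f$ of $\c$ of degree $2k_f$ is filled by an ordinary island $\m_f \in \I_{k_f}$ exactly as in Proposition~\ref{prop:LawCluster}.

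Next I would split the joint weight along this bijection. Partitioning the faces of $\m$ according to which island they lie in yields
\[\wq(\m) \;=\; \frac{\wq(\m_{f_*})}{q_m} \prod_{f \text{ internal of } \c} \frac{\wq(\m_f)}{q_{k_f}},\]
the factors $1/q_m$ and $1/q_{k_f}$ removing the spurious weight coming from each island's own root face. Since the edges of $\c$ are all open while the states of the remaining edges live independently inside their hosting island, the percolation weight factorises as $p^{\#\mathrm{E}(\c)}$ times one factor per island.

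Summing over all admissible islands and their internal percolation configurations then produces the partition function $I_m^{(l)}(p)$ from the special island and $I_{k_f}(p)$ from each ordinary island, by the very definitions of these quantities. Using the identity $\#\mathrm{E}(\c) = m + \sum_{f \text{ internal}} k_f$ together with $\qt_k = p^k I_k(p)$, this gives
\[\Pql(\C = \c) \;\propto\; w_{\qt}(\c) \cdot \frac{I_m^{(l)}(p)}{I_m(p)}, \qquad \c \in \M_m.\]
The second factor is the same for every $\c \in \M_m$, so conditioning on $\{\#\partial\C = 2m\}$ cancels it together with the overall normalisation of $\Pql$, and recovers exactly $\Pqt^{(m)}$ as defined in~\eqref{eqn:BoltzmannBoundary}.

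The only real obstacle is the careful bookkeeping of root-face weights: one must verify that the distinguished internal face of $\m_{f_*}$ of degree $2l$ really is the root face of $\m$ (so that the factor $q_l$ survives in $\wq(\m)$), and that the spurious $q_m$ attached to the root face of $\m_{f_*}$ is precisely the one cancelled in the definition of $I_m^{(l)}(p)$. Once this is in order, the computation is essentially a rerun of Proposition~\ref{prop:LawCluster}, with the extra external-perimeter constraint absorbed into $I_m^{(l)}(p)$.
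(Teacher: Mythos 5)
Your proposal is correct and follows essentially the same route as the paper: the same island decomposition with the root face of $\C$ filled by an element of $\I_m^{(l)}$, the same factorization of the Boltzmann and percolation weights into $w_{\qt}(\c)$ times a $\c$-independent constant, and the same final conditioning step to cancel that constant. The paper merely states the proportionality $\Pql(\C=\c)\propto I_m^{(l)}(p)\, p^{\#\mathrm{E}(\c)}\prod_{f\neq f_*}I_{\deg(f)/2}(p)$ and sums over $\M_m$ rather than isolating the ratio $I_m^{(l)}(p)/I_m(p)$ as you do, but these are the same computation.
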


\begin{proof}By the aforementioned decomposition of maps with fixed perimeter, for every $\c\in\M_m$,
\[\Pql(\C=\c)\propto I_m^{(l)}(p) p^{\#\mathrm{E}(\c)}\prod_{\substack{f\in \mathrm{F}(\c) \\ f \neq f_*(\c)}}I_{\frac{\deg(f)}{2}}(p),\] so that there exists a constant $\kappa$ (that depends on $p$, $\q$, $m$ and $l$ but not on $\c$) such that $\Pql(\C=\c)=\kappa \cdot w_{\qt}(\c)$. By summing over all maps $\c\in\M_m$, we have
\[\Pql(\#\partial\C=2m)=\sum_{\c\in\M_m}\Pql(\C=\c)=\kappa \cdot \qt_m W_{\qt}^{(m)},\] so that $\Pql(\C=\c\mid \#\partial\C=2m)=\P_{\qt}^{(m)}(\c)$ for every $\c\in\M_m$, as expected.\end{proof}

\subsection{Peeling exploration of bond percolation}
We now introduce the peeling algorithm that we will use in order to study the bond percolation model on $\q$-Boltzmann maps.
\subsubsection{Percolation exploration on finite deterministic maps}

We start by defining this algorithm in a \textit{deterministic} context. Let us consider a finite map $\m\in\M$, and assume that every edge $e$ of $\m$ carries an nonnegative integer number $n_e$ of ``marks" (represented by red crosses on our figures). We turn these marks into a coloring of the edges by declaring that the edge $e$ is black if and only if $n_e>0$. The exploration of $\m$ will be a sequence of decorated sub-maps $\e_0 \subset \cdots \subset \e_\theta \subset \m$, i.e.~sub-maps carrying marks on their edges, including the boundary edges of their hole (we however keep the same notation $\e$ for a sub-map). Here, $\e_i$ is a sub-map of $\m$ means that we can recover $\m$ by gluing inside the only hole of $\e_i$ the proper map with a general boundary, this map carrying itself marks on the edges, and we simply add-up the marks in the gluing operation (see Figure \ref{fig:GlueCross}).

\begin{figure}[!h]
 \begin{center}
 \includegraphics[width=0.75\linewidth]{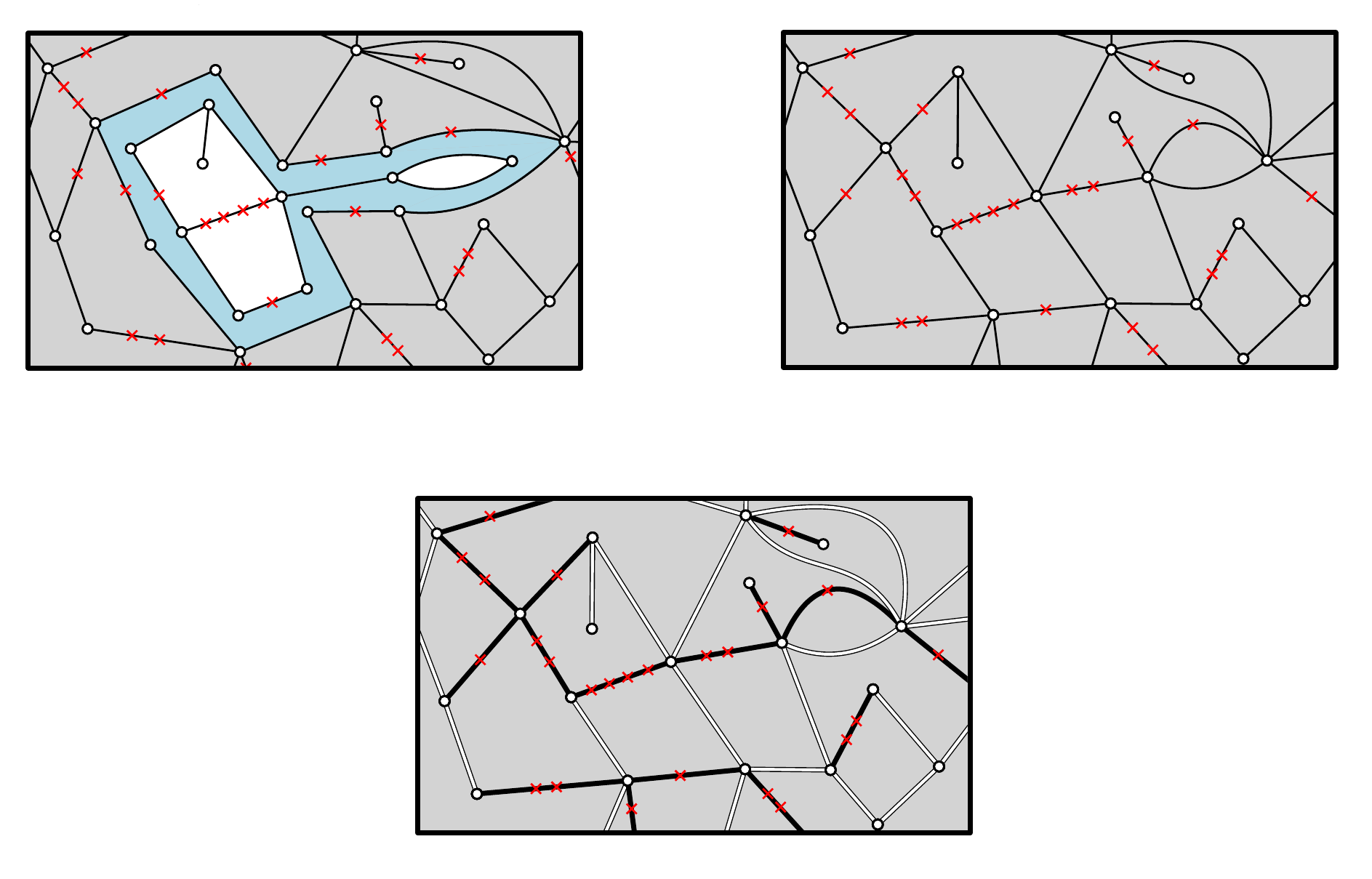}
 \caption{Illustration of the gluing operation in the presence of marks. Notice in particular that the marks carried by the half-edges on both sides of the ``isthmus-edge'' of the inside map add up in the gluing process. The bottom picture shows the interpretation of marks in terms of percolation.}
 \label{fig:GlueCross}
 \end{center}
 \end{figure}

In the next part, we call \textit{boundary condition} of $\e_i$ the marks on the boundary of its hole. We will say that $\e_i \subset \m$ has a ``black-free'' boundary condition if the boundary of its unique hole is made of two finite connected segments, one made of edges carrying a single mark (black edges)  and the other one of edges carrying no mark (free edges). When $\e_i \subset \m$ has a ``black-free'' boundary condition, then the edge to peel $ \mathcal{A}( \e_i)$ is the edge immediately on the left of the black segment (when turning counterclockwise inside the hole). Furthermore, if the boundary condition is totally black, then we set $\A(\e_i) = \ddag$ and the algorithm ends.

The basic principle of the peeling process $(\e_i : 0 \leq i \leq \theta)$ of $\m$ is to reveal if $n_e=0$ or not, without revealing the exact value $n_e$ itself. Precisely, the algorithm works as follows: we start by defining $\e_0$ as the map made of a simple face whose degree is that of the root face of $\m$, and carrying a single mark on the edge that corresponds to the root edge (recall that we assumed that the root edge carries at least one mark). The sub-map $\e_{0} \subset \m$ thus has a black-free boundary condition, and the edge $\A(\e_0)$ is the edge on the left of the root edge.

\begin{algorithm}\label{alg:PeelingpercoFinite} Let $0\leq i < \theta$ and assume that the map $\e_i$ has ``black-free" boundary condition. Recall that $\A(\e_i)$ is the edge on the left of the black segment on the boundary of the hole. We consider the edge $\epsilon$ of the map that fills in the hole of $\e_i$ and that is opposite to $\A(\e_i)$.

\begin{enumerate}

\item If $\epsilon$ carries at least one mark, then remove one mark from $\epsilon$ and add this mark on $\A(\e_i)$ to form $\e_{i+1}$ (we just move one mark, the total number of marks being unchanged).
\item If $\epsilon$ carries no mark, then we trigger a standard peeling step and reveal the face inside the hole that is incident to $\A(\e_i)$. If we discover a new face (event of type $ \mathsf{C}_{k}$) the new edges on the boundary of the hole carry no mark. If two half-edges corresponding to $\A(\e_i)$ are identified (event of type $ \mathsf{G}_{j,k}$), their marks add up and two holes are created. Then, we fill-in the hole that has a \textit{totally monochromatic boundary}.	
	\end{enumerate}
\end{algorithm}

Note that the ``black-free" boundary condition is preserved, so that Algorithm \ref{alg:PeelingpercoFinite} is well defined.  In case 2, for an event of type $\Cs_k$, both sides of the peeled edge have been discovered and no mark has been encountered: this edge is white, see Figure~\ref{fig:PeelingPercoFinite}.  For an event of type $\Gs_{j,k}$, the peeled edge may be identified to a black edge (hence become black) or to a free edge (hence become white). The two cases are illustrated in Figure~\ref{fig:PeelingPercoFinite}.

\begin{figure}[ht]
	\centering
	\includegraphics[scale=.8]{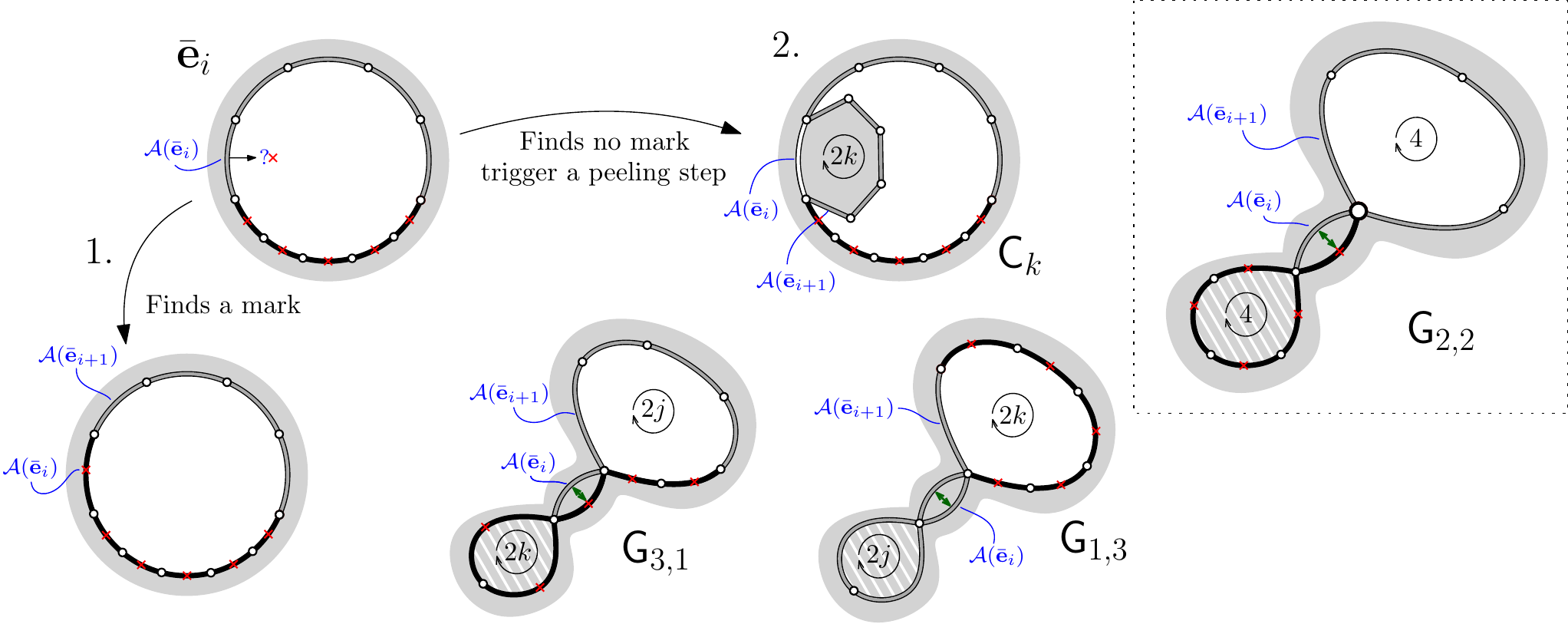}
	\caption{The peeling Algorithm \ref{alg:PeelingpercoFinite}. Free edges are represented in darkgray, the explored regions in lightgray  and the filled-in regions are hatched. In the top right drawing, the large vertex serves as a black segment of length zero on the boundary of the hole.}
	\label{fig:PeelingPercoFinite}
	\end{figure}

We now make important remarks. Some care is needed when in case 2, an event of type $ \mathsf{G}_{j,k}$ identifies the peeled edge with the right-most edge of the black boundary, or to the free edge on the right of the black boundary. In the first case, the convention is that the endpoint of the peeled edge belonging to the hole with free boundary serves as a black boundary of length zero. So we fill in the other hole, and continue the exploration in the free hole as shown in the framed case of Figure~\ref{fig:PeelingPercoFinite}. In the second case, the convention is to fill-in the free hole. This is one of the two ways the algorithm can stop, both being illustrated in Figure~\ref{fig:PeelingPercoFiniteStop}.

	\begin{figure}[h!]
	\centering
	\includegraphics[scale=.82]{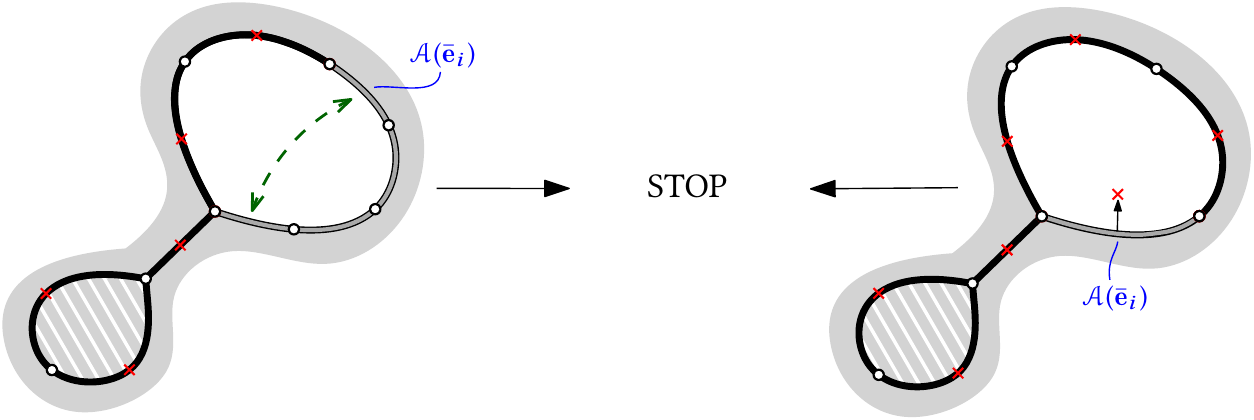}
	\caption{The two ways Algorithm \ref{alg:PeelingpercoFinite} may stop. (Left) The $\theta$-th peeling step identifies $ \mathcal{A}( \e_{\theta-1})$ with the edge adjacent immediately on the right of the black segment. Then, we fill-in the ``free'' hole created. (Right) At time $\theta-1$ the boundary of $\e_{\theta-1}$ contains a single free edge which is then given a mark at step $\theta$ and becomes black. In both cases, the boundary of $\e_{\theta}$ is completely black, and the algorithm stops. 
	\label{fig:PeelingPercoFiniteStop}}
	\end{figure}

When running, the exploration driven by Algorithm \ref{alg:PeelingpercoFinite} goes clockwise around the boundary $\partial\mathcal{C}$ of the percolation cluster, starting from the root edge (see Figure \ref{fig:examplecomplete}). In particular, at time $\theta$, the boundary $\partial\C$ of the percolation cluster has been completely revealed by the exploration process, as shown in Figure \ref{fig:PeelingPercoFiniteStop}. Since at every step of the peeling exploration, at most one half-edge of $\partial\C$ is discovered, we have the crude bound
\begin{equation}\label{eqn:BoundPerimeterCLuster}
	\#\partial\C\leq \theta+1,
\end{equation} that will be useful later on (where the extra factor one accounts for the half root-edge). Before moving to the stochastic properties of this process when run on percolated $\q$-Boltzmann maps, we let the reader get accustomed to this exploration by performing the full exploration of the cluster given in Figure~\ref{fig:examplecomplete} below.

\begin{figure}[!h]
 \begin{center}
 \includegraphics[width=0.9\linewidth]{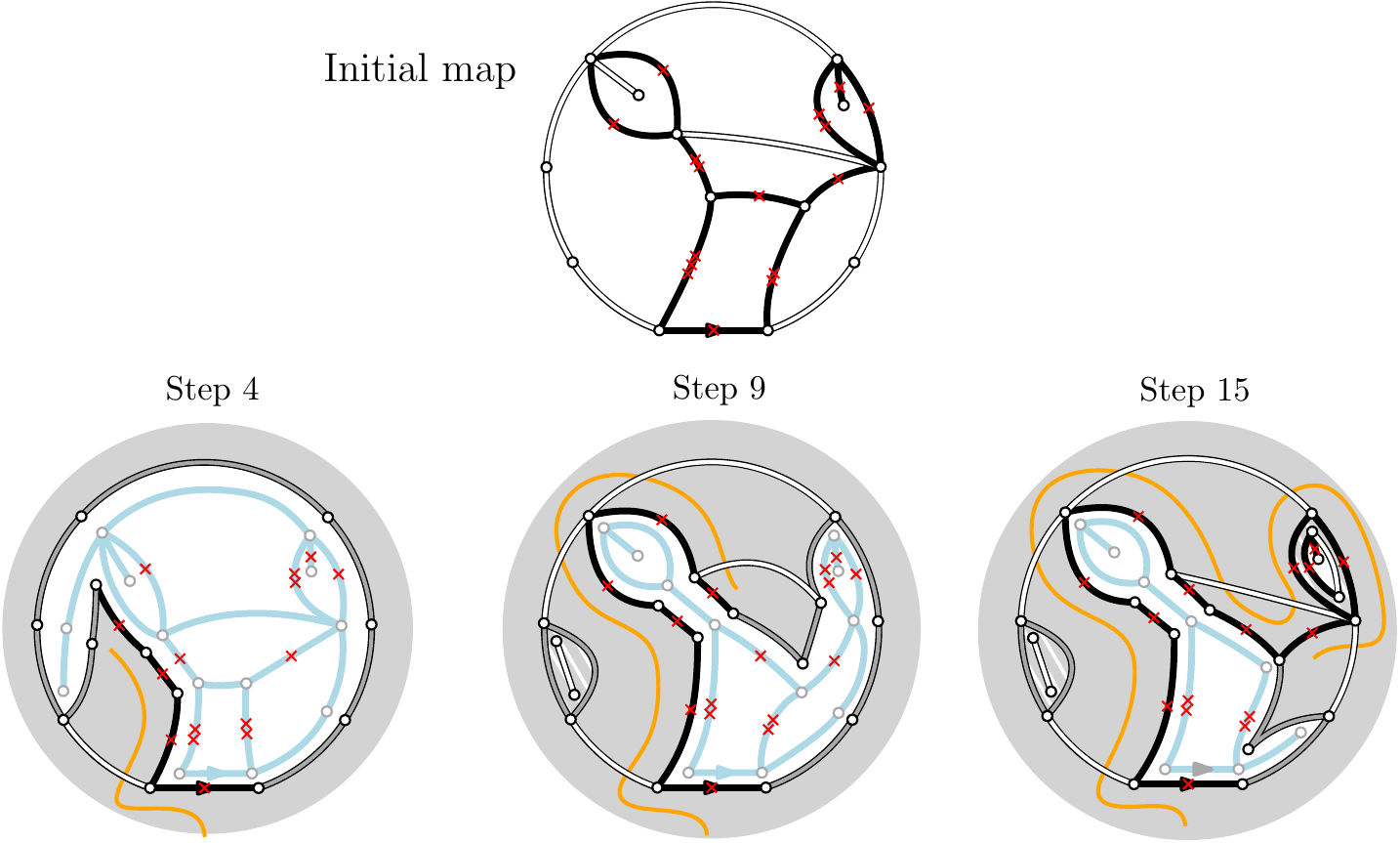}
 \caption{A step by step exploration of a percolated map. On top, the decorated map to explore as well as its interpretation as a bond percolation model. Below, the exploration frozen at step 4, step 9 and step 15. The explored region is in lightgray with filled-in parts hashed. The region that remains to be explored is in white and we drew the map filling-it in light blue. In orange is the path followed by the exploration: it turns clockwise around $ \mathcal{C}$. Notice the difference between the two ``isthmus" edges visited at steps 9 and 15: in the first case, this edge carries more than one mark, so the peeling exploration will discover its two half-edges as black and continue without identifying them. In the second case, it carries a single cross so that when the second half-edge is visited, a peeling step is performed and the two half-edges are glued together.}
 \label{fig:examplecomplete}
 \end{center}
 \end{figure}
 
 \subsubsection{Percolation exploration on $ \q$-Boltzmann maps} \label{sec:peelingBoltfinite}

 We now let $\q$ be an admissible weight sequence, consider a $\q$-Boltzmann map $M_\q$. We assume that every edge $e$ of $M_\q$ carries an independent random number $N_e$ of marks, distributed as a geometric variable with parameter $p\in[0,1]$, that is, $P(N_e=k)=(1-p)p^k$ for every $k\in\Z_{\geq 0}$. As above, these marks are interpreted as a bond percolation model by declaring that the edge $e$ is black if and only if $N_e>0$. If we further give an additional mark to the root edge (in order to force it to be black), this precisely corresponds to the bond percolation model we are working with in this paper. We keep the notation $\Pq$ for the resulting probability measure, hiding the dependence in the marks.

We now consider the peeling exploration $(\e_i : 0 \leq i \leq \theta)$ of $M_\q$ driven by Algorithm \ref{alg:PeelingpercoFinite}. We claim that during such an exploration, the following statement holds.

\begin{lemma}[Decorated spatial Markov property]\label{lem:DecoratedSMP} Let $0 \leq i < \theta$, and recall that $P_i$ is the half-perimeter of the unique hole of $\e_{i} \subset \m$. Then, conditionally on $\e_i$, the map filling-in its hole has law $ \mathbb{P}^{(P_i)}_{\q}$, and its edges are equipped with i.i.d.\ numbers of marks with geometric distribution of parameter $p$.
\end{lemma}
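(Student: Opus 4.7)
The statement is established by induction on the peeling step $i$, the two key ingredients being the memoryless property of the geometric distribution and the spatial Markov property of the standard (unmarked) peeling process recalled at the end of Section \ref{sec:PeelingqBoltzmann}. The inductive hypothesis I propagate is precisely the assertion of the lemma: conditionally on $\e_i$ (and on $\{\theta>i\}$), the map filling-in the unique hole of $\e_i$ has law $\mathbb{P}^{(P_i)}_{\q}$ and its edges carry i.i.d.\ numbers of marks of geometric distribution with parameter $p$, independently of $\e_i$.

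For the base case $i=0$, the map $\e_0$ is the polygon of degree $2P_0=\deg(f_*)$ bearing a single mark on the boundary edge identified with $e_*$, and its hole is filled by $M_{\q}$ amputated from its root face. The Boltzmann weight \eqref{eqn:BoltzmannWeightBoundary} and the definition \eqref{eqn:BoltzmannBoundary} show that, conditionally on $\deg(f_*)=2P_0$, this inside map has law $\mathbb{P}^{(P_0)}_{\q}$. The non-root edges of $M_{\q}$ carry independent geometric-$p$ marks by construction, and those marks lie entirely inside the hole. As for the root edge, its total mark count in $M_{\q}$ is $1+$(geometric-$p$); once the single mark on $e_*$ has been allocated to $\e_0$, the residual count on the corresponding boundary edge of the inside map is geometric of parameter $p$, independent of the rest.

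For the inductive step, assume the property at time $i<\theta$. By hypothesis, the boundary edge $\epsilon$ of the inside map that is identified with $\A(\e_i)$ under the gluing carries a number of marks $N_\epsilon$ which is geometric of parameter $p$, independent of the remaining marks and of $\e_i$. Condition on the value of $N_\epsilon$. In case 1 of Algorithm \ref{alg:PeelingpercoFinite} we are on $\{N_\epsilon\geq 1\}$, an event of probability $p$; the underlying map is unchanged and one mark is transferred from $\epsilon$ to $\A(\e_i)$. The memoryless property yields that $N_\epsilon-1$ is again geometric of parameter $p$, independent of the other marks, so the inductive claim at time $i+1$ follows immediately with $P_{i+1}=P_i$. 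In case 2 we are on $\{N_\epsilon=0\}$, of probability $1-p$; no further information is revealed on the other marks, which remain i.i.d.\ geometric. The algorithm then performs a standard peeling step on $\A(\e_i)$ applied to a $\mathbb{P}^{(P_i)}_{\q}$-distributed map, so by the transition probabilities \eqref{eqn:LawPeelingFinite} and the spatial Markov property of Section \ref{sec:PeelingqBoltzmann}, conditionally on the realized event $\Cs_k$ or $\Gs_{j,k}$ the map(s) filling the newly created hole(s) are independent $\q$-Boltzmann maps of the correct perimeter, each still decorated with i.i.d.\ geometric marks inherited from the inside map of $\e_i$. On $\Cs_k$ the propagation is immediate; on $\Gs_{j,k}$ the algorithm fills-in one of the two holes with its independent marked content and retains the other, which carries the required distribution for $\e_{i+1}$.

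The principal technical point is the bookkeeping in the $\Gs_{j,k}$ case, where the inside map of $\e_i$ must be factored across the identification into two independent decorated sub-maps. This however follows directly from the standard spatial Markov property combined with the independence of geometric marks on disjoint edge sets, so the whole argument reduces to a careful marriage of memorylessness and the peeling spatial Markov property.
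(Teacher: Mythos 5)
Your induction is exactly the argument the paper uses: the base case from the definition of $\e_0$ and $\P_\q^{(P_0)}$, case 1 from the memorylessness of the geometric law, and case 2 from the spatial Markov property of the unmarked peeling process together with the independence of marks across disjoint edge sets. The paper states this in three sentences with a citation to \cite[Proposition 7]{curien_peeling_2016}; your write-up fills in the bookkeeping (mark transfer at the root edge in the base case, the $\Gs_{j,k}$ factorization) but is the same proof.
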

\proof The claim is proved by induction. First, it clearly holds in $\e_0$. Then, at step $i<\theta$, the distribution of the map filling-in the hole of $\e_{i}$ follows from the spatial Markov property of the peeling process, see \cite[Proposition 7]{curien_peeling_2016}. Moreover, in case $1$ of Algorithm \ref{alg:PeelingpercoFinite}, the induction hypothesis is checked by the memorylessness property of geometric variables: if $X$ is a geometric variable, conditionally on $X\geq 1$, we have that $X-1$ has the same law as $X$.\endproof

The above spatial Markov property of the exploration process given by Algorithm \ref{alg:PeelingpercoFinite} can be used to describe its transition probabilities only in terms of the lengths of the black and free boundaries. We first introduce some notation. For every $0 \leq i \leq \theta$, we let $B_i$ (resp.\ $F_i$) be the number of black (resp.\ free) edges on the boundary of the hole of $\e_i$ (at step $i$ of the peeling algorithm). By definition of Algorithm \ref{alg:PeelingpercoFinite}, the lifetime $\theta$ of the peeling process then reads
\begin{equation}\label{eqn:EqnTauStarFinite}
	\theta:=\inf\left\lbrace i> 0 : \A(\e_i) = \ddag\right\rbrace = \inf\{i> 0 : F_i = 0 \}.
\end{equation} Recall the subtlety that when a peeling step swallows the whole black segment except one vertex (i.e., when $B_i=0$), we agree that the black segment on the boundary is reduced to this vertex in the definition of Algorithm \ref{alg:PeelingpercoFinite}. Note that $F_0=\deg(f_*)-1$ is given by the (random) degree of the root face $f_*$ of $M_\q$, while $B_0=1$. Finally, recall that $P_i=\tfrac{1}{2}(B_i+F_i)$ stands for the half-perimeter of the hole of the map $\e_i$. 

We now examine the possible cases of Algorithm~\ref{alg:PeelingpercoFinite}. In case 1, we see that $B_{i+1}=B_i+1$ and $F_{i+1}=F_i-1$ (a free edge is turned into a black edge). In case 2, if $\Cs_k$ is realized, then $F_{i+1}=F_i+2k-1$, while if $\Gs_{j,k}$ is realized, then $F_{i+1}=F_i-2j-2$ if free edges are swallowed, or $B_{i+1}=B_i-2k-1$ and $F_{i+1}=F_i-1$ if black edges are swallowed.

Now, using the law of the peeling steps of a $\q$-Boltzmann map recalled in \eqref{eqn:LawPeelingFinite} together with the spatial Markov property of Lemma \ref{lem:DecoratedSMP}, we obtain the following result.

\begin{lemma}\label{lem:LawBFinite} Let $(\e_i : 0 \leq i \leq \theta)$ be the peeling exploration of $M_\q$ driven by Algorithm \ref{alg:PeelingpercoFinite}. Then the process $((B_i,F_i) : 0 \leq i \leq \theta)$ is a Markov chain (killed at the first time $F_i$ takes value zero) whose transition probabilities are given conditionally on $(B_{i},F_{i})$ and $\{F_i>0\}$ by
\begin{equation*}
	(B_{i+1},F_{i+1})=(B_{i},F_{i})+\left\lbrace \begin{array}{lcll}
		(1,-1) & \mbox{\footnotesize proba.} & p &\\
		(0,2k-1) & \mbox{\footnotesize proba.} & (1-p) \p^{(P_i)}(k) & (k\geq 1) \\
		(-2k-1,-1) & \mbox{\footnotesize proba.} & (1-p) \p^{(P_i)}(P_i-k-1,k) & (0 \leq k \leq \tfrac{1}{2}(B_i-1))\\
		(0,-2j-2) & \mbox{\footnotesize proba.} & (1-p) \p^{(P_i)}(j,P_i-j-1) & (0 \leq j \leq \tfrac{1}{2}F_i-1)\\
	\end{array} \right..
\end{equation*}
\end{lemma}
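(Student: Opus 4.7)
The plan is to deduce Lemma \ref{lem:LawBFinite} directly from two ingredients already at our disposal: the decorated spatial Markov property of Lemma \ref{lem:DecoratedSMP}, together with the standard peeling transitions of a $\q$-Boltzmann map recalled in \eqref{eqn:LawPeelingFinite}. The Markov property of $((B_i,F_i))$ is immediate from the former: conditionally on $\e_i$, and hence on $(B_i,F_i)$, the map filling in the hole is $\mathbb{P}_\q^{(P_i)}$-distributed with i.i.d.\ Geom$(p)$ marks on its edges, so the next step of Algorithm~\ref{alg:PeelingpercoFinite} depends on $\e_i$ only through the half-perimeter $P_i=(B_i+F_i)/2$ and through the known position of $\A(\e_i)$, immediately to the left of the black segment.

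For the transitions I would split along the two branches of Algorithm~\ref{alg:PeelingpercoFinite}. The first branch occurs when the opposite edge $\epsilon$ of the filling map carries at least one mark; by the i.i.d.\ Geom$(p)$ property this has probability $p$, and by memorylessness after transferring one mark to $\A(\e_i)$ the rest of the filling map retains the same distribution. This gives the first line of the lemma with update $(B,F)\mapsto(B+1,F-1)$. In the complementary branch, of probability $1-p$, a standard peeling step is performed, so events $\Cs_k$ and $\Gs_{j',k'}$ occur with the probabilities $\p^{(P_i)}(k)$ and $\p^{(P_i)}(j',k')$ given by \eqref{eqn:LawPeelingFinite}, and it only remains to read off how $(B,F)$ is affected in each case.

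The last step is pure bookkeeping. In $\Cs_k$ a $2k$-gon is glued along $\A(\e_i)$ and contributes the announced free-edge increment. In $\Gs_{j',k'}$ the peeled edge is identified with another boundary edge, lying either in the black segment (of length $B_i$) or in the free segment (of length $F_i-1$, once $\A(\e_i)$ is excluded), and the convention of Algorithm~\ref{alg:PeelingpercoFinite} is to fill in the resulting monochromatic hole. A direct count of the edges removed from the boundary then produces the third and fourth lines together with the stated ranges of $j$ and $k$. I expect the only delicate point to be the degenerate subcases where the identification swallows the entire black segment: the convention recalled just after Algorithm~\ref{alg:PeelingpercoFinite} and illustrated by the framed case of Figure~\ref{fig:PeelingPercoFinite} -- that a vanished black segment is represented by a single vertex -- is precisely what is needed for the chain to be well defined up to the killing time $\theta$ given by \eqref{eqn:EqnTauStarFinite}.
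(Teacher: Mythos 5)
Your proposal is correct and follows essentially the same route as the paper: invoking Lemma~\ref{lem:DecoratedSMP} for the Markov property, then splitting on the two branches of Algorithm~\ref{alg:PeelingpercoFinite} (probability $p$ for a discovered mark, probability $1-p$ for a peeling step) and reading off the $(B,F)$ increments from the case analysis combined with \eqref{eqn:LawPeelingFinite}. The paper handles the bookkeeping in the paragraph immediately preceding the lemma rather than in a formal proof environment, but the content matches yours, including the attention to the degenerate case where the black segment is entirely swallowed.
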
 

\subsubsection{Bond percolation on Infinite Boltzmann Half-Planar Maps}\label{sec:IBHPM}

The proof of our main theorem is based on a careful analysis of cut-edges and of the tail distribution of the perimeter $\#\partial \mathcal{C}$ of the cluster $\C$ under $ \mathbb{P}_{\q}$. Although such an analysis should in principle be performed on finite $\q$-Boltzmann maps, it is easier to carry out in the half-planar setting, where the peeling steps lose their dependency in the perimeter and become i.i.d.. In this section, we describe the adaptation of our Algorithm \ref{alg:PeelingpercoFinite} in the half-planar case (the main difference being that the algorithm is stopped as soon as the percolation cluster $ \mathcal{C}$ is separated from infinity). \medskip

\paragraph{The $\q$-$\IBHPM$ and its peeling process.} Let us briefly review the construction of the Infinite Boltzmann Half-Planar Map and of its peeling process. We refer to  \cite[Section 4.1]{curien_peeling_2016} for details. Fix $\q$ an admissible weight sequence and recall that  $\Pq^{(k)}$ is the law of a $\q$-Boltzmann map with perimeter $2k$. Then we have the following convergence
\begin{equation}\label{eqn:qIBHPM}
	\Pq^{(k)} \underset{k\rightarrow \infty}{\Longrightarrow} \Pqinf,
\end{equation} in distribution for the  \textit{local topology}. A map with distribution $\Pqinf$ is usually denoted by $\Minf$, and called the Infinite Boltzmann Half-Planar Map with weight sequence $\q$ (for short, $\q$-$\IBHPM$). It is a.s.~a map of the half-plane, meaning that it is one-ended with a unique infinite face (the root face), that we think of as an infinite boundary. The $\q$-$\IBHPM$ is also amenable to a peeling process, see \cite[Section 4.1]{curien_peeling_2016}. The description is very similar to that of the peeling process of a $\q$-Boltzmann map and we only highlight the differences. A peeling process of a half-planar map $\m_\infty$ is an increasing sequence $(\e_i : i\geq 0)$ of sub-maps of $\m_\infty$ that contain the root edge and such that for every $i\in \Z_{\geq 0}$, the map $\e_i$ has a unique hole with infinite perimeter. Here, the map $\e_0$ is the embedding of the graph of $\Z$ in the plane (defining two infinite faces, one of which is the root face) and given $\e_i$, the map $\e_{i+1}$ is obtained by revealing the status of the face $f_i$ incident to the left of $\A(\e_i)$ in $\m_\infty$. Two situations may occur, corresponding to peeling events that we denote by $\Cs_k$ and $\Gs_{k,\infty}$ (or $\Gs_{\infty,k}$) and that are illustrated in Figure \ref{fig:Peeling}. The main difference with the finite setting is that when two holes are created, we fill-in the \textit{finite hole}.

\begin{figure}[ht]
	\centering
	\includegraphics[scale=1.1]{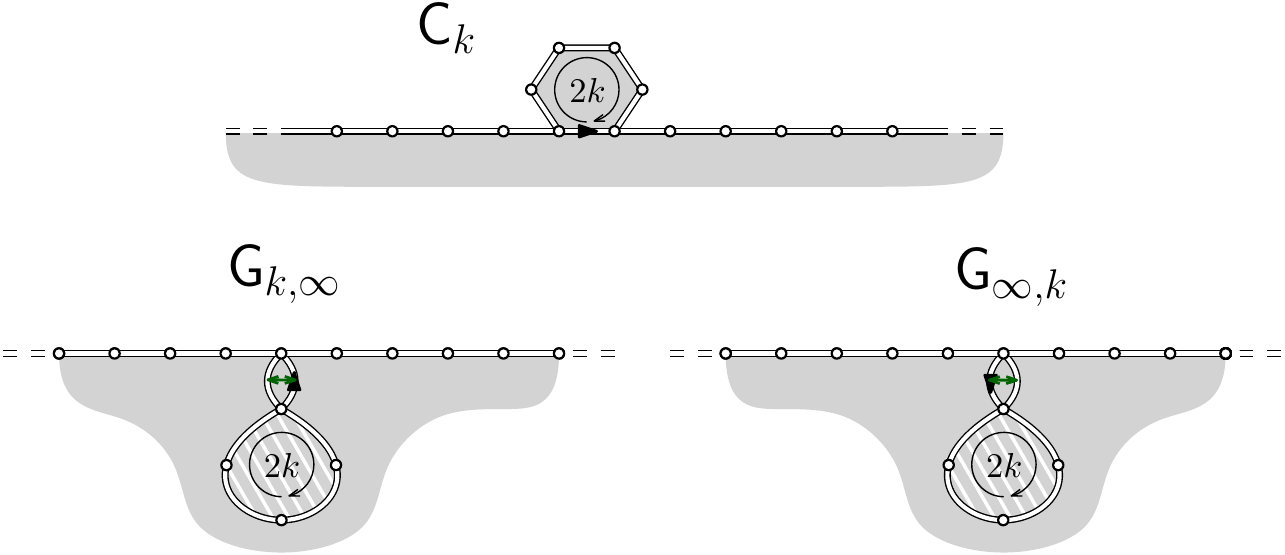}
	\caption{The peeling events $\Cs_k$, $\Gs_{k,\infty}$ and $\Gs_{k,\infty}$. On the event $\Cs_{k}$, the face $f_i$ does not belong to $\e_i$, and has degree $2k$ $(k\in \N)$. In this case, $\e_{i+1}$ is obtained from $\e_i$ by adding this new face, without performing any identification of its edges. Otherwise, $f_i$ belongs to $\e_i$ and  $\e_{i+1}$ is obtained from $\e_i$ by identifying the two half-edges of the hole that correspond to $\A(\e_i)$. This creates two holes, one of which is infinite. We denote this event by $\Gs_{k,\infty}$ (if we identify $\A(\e_i)$ to an edge on its left) or $\Gs_{\infty,k}$ (in the other case), where $2k$ $(k\in\Z_{\geq 0})$ stands for the perimeter of the finite hole. }
	\label{fig:Peeling}
	\end{figure}

\medskip

We now consider the peeling process on the $\q$-$\IBHPM$ by choosing $\m_{\infty} = \Minf$. Then, the conditional distribution of the events $\Cs_k$ and $\Gs_{k,\infty}$ does not depend on $\e_{i}$ (as long as the peeling algorithm $\mathcal{A}$ does not use information outside of $\e_{i}$) and are given by 
\begin{equation}\label{eqn:LawPeeling}
\begin{array}{ccc}
\Pqinf\left(\Cs_k \mid \e_i \right)=\nu_\q(k-1) &(k\geq 1).\\
\\
\Pqinf\left(\Gs_{k,\infty} \mid \e_i \right)=\Pqinf\left(\Gs_{\infty,k} \mid \e_i \right)= \nu_\q(-k-1)/2 &(k\geq 0),\\
\end{array}
\end{equation} where $\nu_\q$ has been defined in \eqref{eqn:DefNu}.

\paragraph{Percolation exploration in the $\q$-$\IBHPM$.} We resume with a framework similar to that of Section \ref{sec:peelingBoltfinite}, and suppose that the edges of $\Minf$ are decorated by i.i.d.~numbers of marks with geometric law of parameter $p$,  and that the root edge is given an extra mark to force it to be black. We again keep the notation $ \mathbb{P}_{ \mathbf{q}}^{(\infty)}$ for the law of the resulting map. We say that a sub-map $\e_i$ of $\Minf$ has ``free-black-free" boundary condition if the boundary of its infinite hole contains a finite segment of black edges carrying a single mark (and the other edges are free). We then explore the (left) bond percolation interface starting from the root edge through the following analogue of Algorithm \ref{alg:PeelingpercoFinite}.

\begin{algorithm}\label{alg:Peelingperco} Let $i \in \Z_{\geq 0}$ and assume that the map $\e_{i}$ has ``free-black-free" boundary condition.  Let $ \mathcal{A}( \e_{i})$  be the edge on the left of the black segment on the boundary of the hole. We consider the edge $\epsilon$ of the map that fills in the hole of $\e_i$ and that is opposite to $\A(\e_i)$.
\begin{enumerate}
\item If $\epsilon$ carries at least one mark, then remove one mark from $\epsilon$ and add this mark on $\A(\e_i)$ to form $\e_{i+1}$.
\item If $\epsilon$ carries no mark, then we trigger a standard peeling step and reveal the face inside the hole that is incident to $\A(\e_i)$. If we discover a new face (event of type $ \mathsf{C}_{k}$) the new edges on the boundary of the hole carry no mark. If two half-edges corresponding to $\A(\e_i)$ are identified (event of type $ \mathsf{G}_{k,\infty}$ or $ \mathsf{G}_{\infty,k}$), their marks add up and two holes are created. Then, we fill-in the \emph{finite} hole.\end{enumerate}
\end{algorithm}

As announced, the main difference with Algorithm \ref{alg:PeelingpercoFinite} is that we always fill-in the finite hole, and so we may (this happens actually in most cases) do not complete a full turn around the boundary $ \partial \mathcal{C}$ of the cluster before stopping the exploration. More precisely, when an event of type $ \mathsf{G}_{\infty,k}$ happens with $2k$ strictly larger than the length of the black boundary, then the latter is completely swallowed in the finite hole and the exploration stops. The lifetime of this peeling exploration is denoted by $\tau$.

\begin{figure}[h!]
	\centering
	\includegraphics[width=0.9\linewidth]{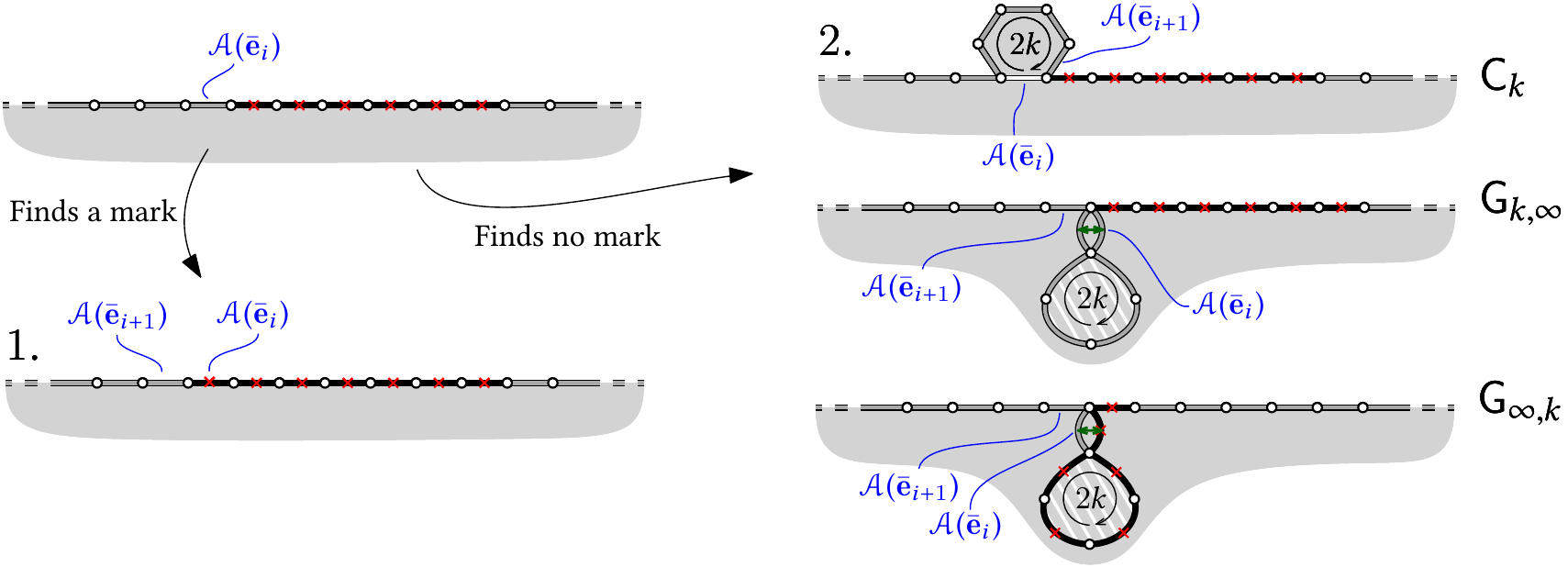}
	\caption{The peeling Algorithm \ref{alg:Peelingperco}. Free edges are represented in darkgray, the explored region in lightgray, and the filled-in regions are hatched. Note that in the case $\Gs_{k,\infty}$, after the gluing operation the resulting edge $\A(\e_i)$ will be white (it carries no mark), while in the case $\Gs_{\infty,k}$ it will be black.}
	\label{fig:PeelingPerco}
	\end{figure}
	
	Here also, some care is needed after an event of type $ \mathsf{G}_{\infty,k}$ where the length of the black boundary is exactly $2k+1$: as in the finite setting, the extremity of the peeled edge then serves as a black boundary of length zero in order to continue the exploration.

In order to describe the peeling exploration of Algorithm \ref{alg:Peelingperco} run on $\Minf$ quantitatively, it is convenient to introduce for every $0\leq i \leq \tau$ the length $B_i$ of the black boundary of the hole of $\e_{i}$ (in terms of number of edges). By definition we have $B_{0}=1$, and $(B_i : 0\leq i \leq \tau)$ evolves as follows: if the exploration of $\A(\e_i)$ reveals a mark, then $ B_{i+1}= B_{i}+1$. If we discover no mark and trigger a peeling step, the only non-zero variation is produced by events of type $ \mathsf{G}_{\infty,k}$ where we then have $B_{i+1} = B_{i} -2k-1$. We choose the convention  that the last quantity may be negative, in which case $\tau=i+1$ and the process stops (moreover, the value $B_\tau$ has no meaning in terms of length of black boundary anymore). Combining the last observation with \eqref{eqn:LawPeeling} we arrive at the following result.

\begin{lemma}\label{lem:LawB} The process $(B_{i} : 0\leq i \leq \tau) $ is a random walk killed at time $\tau = \inf\{ i \geq 0 : B_{i}<0\}$, whose increments are distributed as 
	
	\begin{equation}\label{eqn:LawBstar}
\Delta B:= B_1-B_0= \left\lbrace
\begin{array}{cccc}
1  & \mbox{\footnotesize proba. } & p & \\
-(2k+1) & \mbox{\footnotesize proba. } & \frac{1-p}{2}\nu_\q(-k-1) & (k\geq 0) \\
0 & \mbox{\footnotesize otherwise }
\end{array}\right..
\end{equation}\end{lemma}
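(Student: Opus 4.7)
The plan is to combine a decorated spatial Markov property in the half-planar setting with a case analysis of the peeling step. First, I would establish that conditionally on $\e_i$, the map filling-in the infinite hole of $\e_i$ has law $\Pqinf$ and its edges carry i.i.d.\ geometric marks with parameter $p$. This is the half-planar analogue of Lemma \ref{lem:DecoratedSMP} and is proved by the same induction: the spatial Markov property of the peeling process on the $\q$-$\IBHPM$ (see \cite[Section 4.1]{curien_peeling_2016}) handles the underlying map, while the memorylessness of the geometric distribution (if $X\sim\mathrm{Geom}(p)$, then conditionally on $X\geq 1$, $X-1$ has the same law as $X$) handles the marks in case $1$ of Algorithm \ref{alg:Peelingperco}.

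Then, at each step $i<\tau$, I would enumerate the possible variations of $B_i$ and read off their probabilities from the above property and \eqref{eqn:LawPeeling}. With probability $P(\mathrm{Geom}(p)\geq 1)=p$, the peeled edge carries at least one mark and case $1$ moves one mark onto $\A(\e_i)$, giving $B_{i+1}=B_i+1$. With probability $1-p$ we trigger a genuine peeling step: on $\Cs_k$ the new face contributes only free edges; on $\Gs_{k,\infty}$ the finite hole (perimeter $2k$) is carved out of the free segment on the left of $\A(\e_i)$, so that $\A(\e_i)$ is identified with a free edge; in both cases $B_i$ is unchanged. Only $\Gs_{\infty,k}$ affects $B_i$: the finite hole is cut out of the black segment, swallowing $2k$ of its edges, and $\A(\e_i)$ is identified with the next edge on its right (which is black as long as $B_i\geq 2k+1$), yielding $B_{i+1}=B_i-(2k+1)$. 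Combining with the probabilities in \eqref{eqn:LawPeeling} gives exactly \eqref{eqn:LawBstar}.

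Finally, since none of these transition probabilities depends on $\e_i$ (in particular not on $B_i$), the increments are i.i.d.\ and $(B_i)$ is the random walk with distribution \eqref{eqn:LawBstar}, killed precisely when a $\Gs_{\infty,k}$ event with $2k+1>B_i$ swallows the black segment entirely, which is the event $\tau=\inf\{i\geq 0:B_i<0\}$. The only real subtlety, and main (mild) obstacle, is to treat correctly the boundary conventions at $B_i=0$ (the extremity of the previously peeled edge serves as a black segment of length zero) and when $\A(\e_i)$ is identified with the right-most black edge (the case $2k+1=B_i$), analogously to the finite setting of Algorithm \ref{alg:PeelingpercoFinite}; once these conventions are in place, the description above holds throughout the exploration up to the killing time $\tau$.
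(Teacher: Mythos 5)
Your argument is correct and matches the route the paper takes: the paper treats this lemma as an immediate consequence of (i) the case analysis on Algorithm \ref{alg:Peelingperco} (mark discovered gives $+1$; $\Cs_k$ and $\Gs_{k,\infty}$ leave $B$ unchanged; $\Gs_{\infty,k}$ gives $-(2k+1)$) and (ii) the half-planar transition probabilities \eqref{eqn:LawPeeling}, exactly as you lay out, with the half-planar decorated spatial Markov property supplying the geometric marks on the filling-in map and the memorylessness of the geometric law justifying that the probability of discovering a mark is $p$ and is independent of the peeling-step transitions. Your explicit invocation of the half-planar analogue of Lemma \ref{lem:DecoratedSMP} and the discussion of the boundary conventions ($B_i=0$ and the case $2k+1=B_i$) make explicit what the paper leaves implicit, but the proof is the same in substance.
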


An important consequence of these observations is the following. Recall that $\vert \C \vert $ stands for the total number of vertices of the percolation cluster of the origin in $\m_\infty$. Then we have
\begin{equation}\label{eqn:RelationSizeCTau}
	\{\vert \C \vert=\infty\}=\{\tau=\infty\}.
\end{equation} Indeed, while $i<\tau$, the black edges discovered by the peeling algorithm belong to $\C$. Moreover, if $\tau<\infty$, the edge peeled at time $\tau$ is a white edge that encloses $\C$ into a finite region of the map. We can use this observation to compute, as in \cite[Theorem 12]{curien_peeling_2016}, the bond percolation threshold in the $\q$-$\IBHPM$, which is then defined by

\label{sec:half-planepeeling}
\[p_\q^c:=\inf\left\lbrace p\in [0,1] : \Pqinf(\vert \C \vert = \infty)>0\right\rbrace.\]  With this definition, the percolation threshold is annealed, however it is also equal to the quenched threshold, see \cite[Proposition 31]{curien_peeling_2016}. Using Lemma \ref{lem:LawB}, we see that $\tau$ has a positive probability to be infinite if and only if the random variable $\Delta B$ has positive mean, which amounts to 
$$ p -  \frac{1-p}{2}\underbrace{\sum_{k\geq 0} (2k+1) \nu_{ \mathbf{q}}(-k-1)}_{:= \lambda} > 0 \quad \iff \quad p > \frac{\lambda}{\lambda+2}.$$
This shows that $p_{\q}^{c}= \frac{\lambda}{\lambda+2}$, which is equivalent to the expression of \cite[Theorem 12]{curien_peeling_2016} that uses the so-called mean gulp and exposure.

\section{Cut edges in percolation clusters}\label{sec:CutEdges}

The goal is now to prove Theorem \ref{th:dualityperco} in the critical and supercritical cases. Throughout this section, we fix a weight sequence $ \q$ of type $a \in (2,5/2]$, as well as $p\in[p^c_\q, 1]$. We consider the map $\Minf$ and assume as before that its edges are decorated by i.i.d.~numbers of marks with geometric law, so that it corresponds to the bond percolation model with parameter~$p$.\medskip 

  The main idea is to relate (via Proposition \ref{prop:TypeSequences2}) the critical and supercritical cases of Theorem \ref{th:dualityperco} to the probability of the event that the root edge is a cut-edge of the large percolation cluster $\C$, in a sense that we now make precise. For every map $\m$, an edge $e\in\Em$ is a \textit{cut-edge} of $\m$ if and only if $\m\backslash\{e\}$ is not connected. When removing the root edge $e_*$ from the cluster $\C$, we denote by $\C^-$ (resp. $\C^+$) the connected component containing the source (resp. the target) of $\C$ (rooted at the corner defined by $e_*$). We will use the more precise event \begin{equation}\label{eqn:DefCut}
	\cut_k:=\left\lbrace e_* \text{ is a cut-edge of } \C  \text{ and } \#\partial\C^-=2k \right\rbrace, \quad k\in \Z_{\geq 0},
\end{equation} that is illustrated in Figure \ref{fig:Cut}. 

\begin{figure}[h!]
	\centering
	\includegraphics[scale=1.6]{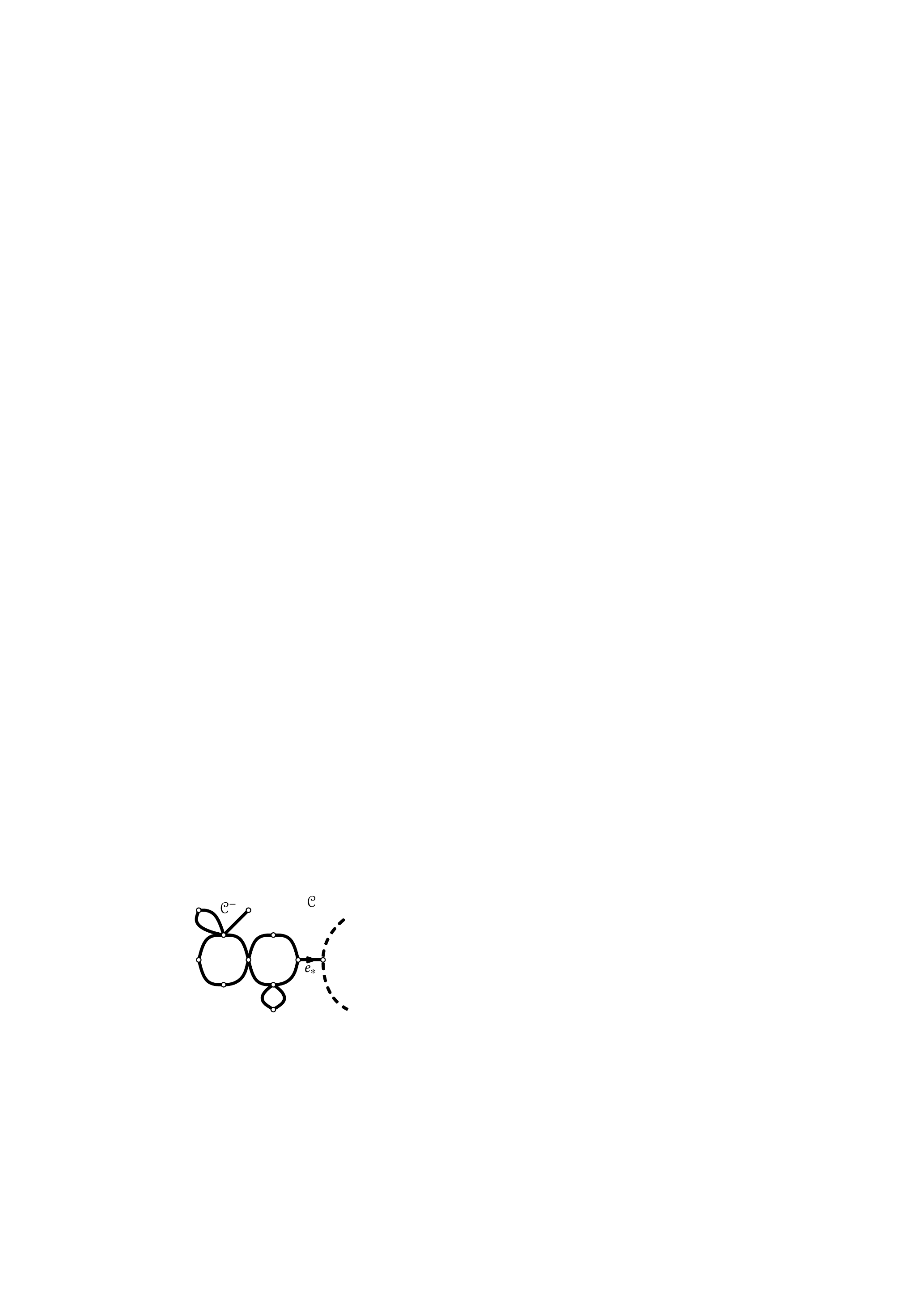}
	\caption{The event $\cut_7$.}
	\label{fig:Cut}
	\end{figure}

\subsection{Relating $ \cut_{k}$ to Theorem \ref{th:dualityperco}}\label{sec:RelateTh1Cut}
From now on, we let $\qt$ stand for the weight sequence defined in \eqref{eqn:DefQt}. The connection between cut-edges and Theorem \ref{th:dualityperco} is established by the following proposition.

\begin{proposition}\label{prop:CutEdgeBoltzmann}
		For every $k\in\Z_{\geq 0}$,
	\[\Pq^{(\infty)}\left(\cut_k \mid \#\partial\C\geq 2m\right) \underset{m\rightarrow \infty}{\longrightarrow} r_{\qt}^{k+1} W_{\qt}^{(k)}.\] 
\end{proposition}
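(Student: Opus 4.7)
My plan is a three-step reduction. First, I would establish a combinatorial identity for $\P_{\qt}^{(m)}(\cut_k)$ via a bijective decomposition of cut-edge configurations; next, I would extract its $m\to\infty$ asymptotic via Proposition \ref{prop:TypeSequences2}; finally, I would transfer the identity to $\Pqinf$ using Proposition \ref{prop:LawClusterB} together with the local convergence \eqref{eqn:qIBHPM}, and conclude by averaging.

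For the combinatorial step, I would argue that any $\c\in\M_m$ realizing $\cut_k$ splits uniquely, upon removal of the bridge $e_*$, into an ordered pair $(\C^-,\C^+)\in\M_k\times\M_{m-k-1}$ rooted canonically from $\c$. The perimeter identity $2m = 2k+2(m-k-1)+2$ reflects the fact that both sides of the bridge $e_*$ contribute to the boundary of the root face of $\c$, and this splitting is a bijection onto $\M_k\times\M_{m-k-1}$. Since the root face of $\c$ (contributing $\qt_m$) replaces the root faces of $\C^\pm$ while all internal faces are preserved, the Boltzmann weights factor as
\[ w_{\qt}(\c) \;=\; \qt_m\,\frac{w_{\qt}(\C^-)\,w_{\qt}(\C^+)}{\qt_k\,\qt_{m-k-1}}. \]
Summing over $(\C^-,\C^+)$ and using \eqref{eqn:PartitionFunctionFk} gives
\[ \P_{\qt}^{(m)}(\cut_k) \;=\; \frac{W_{\qt}^{(k)}\,W_{\qt}^{(m-k-1)}}{W_{\qt}^{(m)}}. \]

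For the asymptotic and transfer, $\qt$ is admissible by Proposition \ref{prop:LawCluster}, and granting that it is of some type $a'\in(3/2,5/2]$ (a fact that is itself part of Theorem \ref{th:dualityperco} and may be obtained a priori from the peeling estimates of Section \ref{sec:CutEdges}), Proposition \ref{prop:TypeSequences2} yields $W_{\qt}^{(m)}\approx r_{\qt}^{-m}m^{-a'}$; hence $W_{\qt}^{(m-k-1)}/W_{\qt}^{(m)}\to r_{\qt}^{k+1}$ as $m\to\infty$ for fixed $k$. To lift the identity from $\P_{\qt}^{(m)}$ to the conditional law under $\Pqinf$, I would observe that for any fixed $\c\in\M_m$ the event $\{\C=\c\}$ is local (it only prescribes that the edges of $\c$ be black and that the edges of the ambient map adjacent to $\c$ be white). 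By \eqref{eqn:qIBHPM} and Proposition \ref{prop:LawClusterB}, $\Pql(\C=\c)\to\Pqinf(\C=\c)$ for each such $\c$; taking ratios between any two $\c,\c'\in\M_m$ then shows that the conditional distribution of $\C$ given $\{\#\partial\C=2m\}$ under $\Pqinf$ is still $\P_{\qt}^{(m)}$. Therefore $\Pqinf(\cut_k\mid\#\partial\C=2m) = W_{\qt}^{(k)}W_{\qt}^{(m-k-1)}/W_{\qt}^{(m)}$, and an averaging over $m'\geq m$ with dominated convergence (these ratios being bounded uniformly in $m'$) yields the claim.

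The main obstacle I anticipate is the supercritical case, where $\Pqinf(\#\partial\C=\infty)>0$ owing to the possibility of an infinite cluster. In the critical regime this issue disappears because $\C$ is finite a.s.\ (the walk $B_i$ of Lemma \ref{lem:LawB} is then recurrent), but for $p>p_\q^c$ one must handle the event $\{\#\partial\C=\infty\}$ separately. A natural route is a direct peeling analysis: reinterpret $\cut_k$ as the initial excursion of $B_i$ that fully explores $\partial\C^-$, and verify that on $\{\#\partial\C=\infty\}$ the asymptotic probability of $\cut_k$ is still $r_{\qt}^{k+1}W_{\qt}^{(k)}$, so that the same limit emerges from the averaging.
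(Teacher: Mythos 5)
Your decomposition of $\P_{\qt}^{(m)}(\cut_k)$ into the ratio $W_{\qt}^{(k)}W_{\qt}^{(m-k-1)}/W_{\qt}^{(m)}$ is the same combinatorial identity the paper uses, and the overall structure (compute under the cluster Boltzmann law, take the $m\to\infty$ limit of the ratio, transfer to $\Pqinf$, average) mirrors the paper's proof. However, there are two genuine gaps.

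First, there is a circularity. You extract the limit $W_{\qt}^{(m-k-1)}/W_{\qt}^{(m)}\to r_{\qt}^{k+1}$ from Proposition~\ref{prop:TypeSequences2}, which requires knowing in advance that $\qt$ is of some type $a'\in(3/2,5/2]$; but this is precisely what Proposition~\ref{prop:CutEdgeBoltzmann} is used to establish (via Theorem~\ref{thm:CutEdgeIBHPM} it pins down the asymptotics of $r_{\qt}^{k}W_{\qt}^{(k)}$, and then Propositions~\ref{prop:TypeSequences}--\ref{prop:TypeSequences2} read off the type). Your parenthetical hedge that the type could be ``obtained a priori from the peeling estimates of Section~\ref{sec:CutEdges}'' does not help, since the proof of Theorem~\ref{thm:CutEdgeIBHPM} explicitly invokes the present proposition to know that $\lim_m\Pqinf(\cut_k\mid\#\partial\C\geq 2m)$ exists. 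The correct ingredient is \cite[Lemma~6]{curien_peeling_2016}, which gives $W_{\qt}^{(j+k)}/W_{\qt}^{(j)}\to r_{\qt}^{-k}$ for \emph{any} admissible weight sequence, with no reference to its type; admissibility of $\qt$ is already supplied by Proposition~\ref{prop:LawCluster}.

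Second, your final averaging step is not secure in the supercritical regime. Under $\Pqinf$ the event $\{\#\partial\C=\infty\}$ has positive probability when $p>p_\q^c$, so when you write $\Pqinf(\cut_k\mid\#\partial\C\geq 2m)$ as an average of $\Pqinf(\cut_k\mid\#\partial\C=2m')$ over $m'\geq m$, a residual term $\Pqinf(\cut_k\mid\#\partial\C=\infty)\cdot\Pqinf(\#\partial\C=\infty\mid\#\partial\C\geq 2m)$ remains, and this latter conditional probability does not vanish as $m\to\infty$. You acknowledge the issue but only sketch a ``direct peeling analysis'' that is not carried out, and which would anyway be delicate since at this stage one does not yet know the asymptotics of $W_{\qt}^{(k)}$. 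The paper sidesteps this entirely by first working under the finite-perimeter laws $\Pq^{(l)}$, where $\#\partial\C$ is automatically finite, showing that $\Pq^{(l)}(\cut_k\mid\#\partial\C\geq 2m)\to r_{\qt}^{k+1}W_{\qt}^{(k)}$ by Cesàro summation \emph{uniformly in $l$} (the conditional probabilities $\Pq^{(l)}(\cut_k\mid\#\partial\C=2j)$ do not depend on $l$), and only then letting $l\to\infty$ via local convergence of $\Pq^{(l)}$ to $\Pqinf$ for the relevant local events. Reorganizing your argument to take the limits in this order, and replacing Proposition~\ref{prop:TypeSequences2} by \cite[Lemma~6]{curien_peeling_2016}, would close both gaps.
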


\begin{proof}Let us fix $k\in\Z_{\geq 0}$. We first argue under $\Pq^{(l)}\left(\cdot \mid \#\partial\C\geq 2m\right)$ for fixed $l,m\in\N$ and write
\begin{equation}\label{eqn:SumCut}
	\Pq^{(l)}\left(\cut_k \mid \#\partial\C\geq 2m\right)=\frac{\sum_{j\geq m} \Pq^{(l)}\left(\cut_k \mid \#\partial\C=2j\right)\Pq^{(l)}\left(\#\partial\C=2j\right)}{\sum_{j\geq m} \Pq^{(l)}\left(\#\partial\C=2j\right)}.
\end{equation} Then observe that for every $l\in\N$, by Proposition \ref{prop:LawClusterB},
	\[\Pq^{(l)}\left(\cut_k \mid \#\partial\C=2j\right)=\P^{(j)}_{\qt}(\cut_k).\] In the right-hand side, the event $\cut_k$ is to be interpreted as the fact that the root edge separates the map into two maps with a boundary of respective perimeter $2k$ and $2(j-k-1)$ (formally, all the edges are black under $\P^{(j)}_{\qt}$). By definition of the Boltzmann measure, this yields
	\[\Pq^{(l)}\left(\cut_k \mid \#\partial\C=2j\right)=\frac{W_{\qt}^{(k)}W_{\qt}^{(j-k-1)}}{W_{\qt}^{(j)}}.\] By \cite[Lemma 6]{curien_peeling_2016}, since $\qt$ is an admissible weight sequence, we get
	\begin{equation}\label{eqn:CvgFt}
		\Pq^{(l)}\left(\cut_k \mid \#\partial\C=2j\right)\underset{j \rightarrow \infty}{\longrightarrow} W_{\qt}^{(k)} r_{\qt}^{k+1}.
	\end{equation} Back to \eqref{eqn:SumCut}, we find by Cesàro summation
	\begin{equation}\label{eqn:CvgCutPl}
		\Pq^{(l)}\left(\cut_k \mid \#\partial\C\geq 2m\right)\underset{m \rightarrow \infty}{\longrightarrow} W_{\qt}^{(k)} r_{\qt}^{k+1},
	\end{equation} and the convergence holds uniformly for $l\in\N$ because $\Pq^{(l)}\left(\cut_k \mid \#\partial\C=2j\right)$ does not depend on $l$. Now, the events $\{\#\partial\C < 2m\}$ and $\cut_k$ are both measurable with respect to the ball of radius $2k\vee 2m +1$, so that by the local convergence \eqref{eqn:qIBHPM}, for every $m\in\N$, 
	\[\Pq^{(l)}\left(\cut_k \mid \#\partial\C\geq 2m\right)\underset{l \rightarrow \infty}{\longrightarrow} \Pqinf\left(\cut_k \mid \#\partial\C\geq 2m\right).\] Since \eqref{eqn:CvgCutPl} holds uniformly for $l\in\N$, this concludes the proof.\end{proof}

The second step of the proof of Theorem \ref{th:dualityperco} is to estimate directly $\Pqinf(\cut_k \mid \#\partial\C\geq 2m)$ when $m$ goes to infinity and $k$ is large using the peeling exploration introduced in Section~\ref{sec:half-planepeeling}. In order to avoid meaningless complications, the trivial case $p=1$ is excluded. The main result of this section is the following.

\begin{theorem}\label{thm:CutEdgeIBHPM} Let $ \q$ be a weight sequence of type $a \in (2,5/2]$.
\begin{itemize}
	\item Critical case. If $p=p_\q^c$, we have
\[\lim_{m\rightarrow\infty}\Pqinf(\cut_k\mid \#\partial\C \geq 2m)\underset{k \rightarrow \infty}{\approx} k^{-\frac{a}{a-1}}.\]

\item Supercritical case. If $p_\q^c<p<1$, we have
\[\lim_{m\rightarrow\infty}\Pqinf(\cut_k\mid \#\partial\C \geq 2m)\underset{k \rightarrow \infty}{\approx} k^{-a}.\]
\end{itemize}
\end{theorem}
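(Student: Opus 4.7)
My plan is to work directly inside the $\q$-$\IBHPM$ using the bond-percolation peeling of Algorithm~\ref{alg:Peelingperco} and its black-boundary random walk $B$ from Lemma~\ref{lem:LawB}. The first step is to read off $\cut_k$ as an event on $B$: geometrically, when $e_*$ is a cut-edge with $\#\partial\C^-=2k$, the exploration traces around $\C^-$ and closes it off by a $\Gs_{\infty,k}$-event that identifies the peeled edge with $e_*$, after which the exploration of $\C^+$ restarts from a zero-length black boundary. Translating this, $\cut_k$ should coincide (modulo a filling condition handled by the spatial Markov property of Lemma~\ref{lem:DecoratedSMP}) with the event that $B$ first visits $0$ at some time $\sigma^*<\infty$ via a jump of size $-(2k+1)$, i.e.\ $B_{\sigma^*-1}=2k+1$ and $\Delta B_{\sigma^*}=-(2k+1)$.

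Applying the spatial Markov property at $\sigma^*$ decomposes $\cut_k\cap\{\#\partial\C\geq 2m\}$ into two independent pieces: a pre-$\sigma^*$ piece producing $\C^-$ of perimeter $2k$, and a post-$\sigma^*$ piece producing $\C^+$ with the law of a fresh percolation peeling of the $\q$-$\IBHPM$ started from a zero-length black boundary. Since $\#\partial\C=\#\partial\C^-+\#\partial\C^++2$, this yields
\[ \Pqinf(\cut_k,\,\#\partial\C\geq 2m) \;=\; q_k \cdot \P^{\circ}\!\big(\#\partial\C\geq 2(m-k-1)\big), \]
where $q_k=\Pqinf(B_{\sigma^*}=0,\,B_{\sigma^*-1}=2k+1)$ and $\P^\circ$ denotes the peeling law started from a zero-length black boundary. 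Using independence of the killing jump from the past, $q_k=\tfrac{1-p}{2}\nu_\q(-k-1)\cdot U(2k+1)$, where $U(h)$ is the expected number of visits of $B$ to level $h$ while staying positive. Since $B$ is spectrally negative (its positive jumps are only $+1$), standard scale-function estimates bound $U$ uniformly in $h$.

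The final step is to extract the $k$-asymptotics via random-walk fluctuation theory for $B$. By Proposition~\ref{prop:TypeSequences2}(4), $\Pqinf(\Delta B\leq -x)\approx x^{1-a}$, so $B$ lies in the domain of attraction of a spectrally negative $\alpha_B$-stable Lévy process with $\alpha_B=a-1$. In the supercritical regime $p_\q^c<p<1$, the walk has positive drift, and both $\Pqinf(\#\partial\C\geq 2m)$ and $\P^\circ(\#\partial\C\geq 2(m-k-1))$ converge to positive constants (essentially the survival probability $\Pqinf(\vert\C\vert=\infty)$); the conditional limit is then a positive multiple of $q_k\approx\nu_\q(-k-1)\approx k^{-a}$, giving the supercritical asymptotic. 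In the critical regime $p=p_\q^c$, the walk oscillates and both perimeter tails decay to $0$; the convergence of $B$ to the spectrally negative stable Lévy process and its excursion theory yield the precise scaling of the two tails, and their ratio produces an explicit power correction to $q_k$ that shifts the exponent from $a$ to $a/(a-1)=1+1/\alpha_B$. The main obstacle is this critical case: the heavy-tailed down-jump and the large-perimeter conditioning interact non-trivially and must be controlled jointly, which requires sharp local-limit and overshoot estimates for random walks in the domain of attraction of spectrally negative stable laws.
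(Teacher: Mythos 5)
Your translation of $\cut_k$ into a random-walk event is incorrect, and this is a substantive error that changes the asymptotics. You claim that $\#\partial\C^-=2k$ corresponds to the walk $B$ first hitting $0$ by a downward jump of size $-(2k+1)$ from level $2k+1$. But the perimeter of $\C^-$ is \emph{not} encoded in the size of the final jump; it is encoded in the \emph{number of non-trivial peeling steps}. Each time $B_{i+1}-B_i$ is non-zero, the exploration of Algorithm~\ref{alg:Peelingperco} discovers exactly one half-edge of $\partial\C$, so on the relevant event one has $\#\partial\C^-=T^*-1$ where $T^*=\inf\{i>0: B^*_i\leq 0\}$. Thus $\#\partial\C^-=2k$ forces $T^*=2k+1$, while the level $B^*_{T^*-1}$ from which the walk drops to $0$ can be any positive odd integer and is in no way constrained to equal $2k+1$. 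The correct peeling event is $\{T^*=2k+1,\ B^*_{T^*}=0\}$, and the natural tool to estimate it is Feller's cyclic lemma, giving $\Pqinf(T^*=2k+1,\ B^*_{T^*}=0)=\tfrac{1}{2k+1}\Pqinf(B^*_{2k+1}=0)$ followed by a local limit theorem — not a Green's function $U(2k+1)$ attached to a prescribed landing level. Your formula $q_k=\tfrac{1-p}{2}\nu_\q(-k-1)\cdot U(2k+1)$ computes the probability of a genuinely different event.

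Two further points are glossed over. First, the peeling exploration only follows the \emph{left} interface of $\C$, so $\cut_k$ is not directly observable along the exploration: $\C^-$ may wrap around $\C^+$ and meet the right boundary of $\Minf$, in which case the explored jump to $0$ need not occur. One must work with $\Hcut_k=\cut_k\cap\{\C^-\cap\Br=\emptyset\}$, establish the exact identity $\Pqinf(\Hcut_k)=\tfrac{1}{1-p}\Pqinf(T^*=2k+1,B^*_{T^*}=0)$ (the factor $1/(1-p)$ comes from conditioning $N_{e_*}=1$, which is itself easy to miss), and then prove separately that $\cut_k$ and $\Hcut_k$ are asymptotically equivalent under the conditioning $\{\#\partial\C\geq 2m\}$; this last step is not a consequence of the spatial Markov property alone but requires the tail estimates on $\#\partial\C$ and the bound $\sum_{j\geq m}\Pqinf(\Hcut_j)\ll\Pqinf(\#\partial\C\geq 2m)$. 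Second, when you apply the Markov property at the exploration's stopping time, the remaining half-plane carries a \emph{totally free} boundary condition, not the "free-black-free" one defining $\Pqinf$; the paper has to prove (Lemma~\ref{lem:BdaryCondition}) that the ratio of the two perimeter tails converges to a constant $c\in(0,1)$, and this constant enters the final answer. Your sketch implicitly identifies the two laws, which is not justified.
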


Note that the limits in Theorem \ref{thm:CutEdgeIBHPM} exist thanks to Proposition \ref{prop:CutEdgeBoltzmann}. The proof of Theorem~\ref{thm:CutEdgeIBHPM} occupies the next sections. Before that, let us show how the proof of Theorem \ref{th:dualityperco} (in the critical and supercritical cases) stems from these two results.

\begin{proof}[Proof of Theorem \ref{th:dualityperco} (Critical and supercritical cases)] We first note that for $p=1$, the result is trivial because the cluster $\C$ is the whole map. Then we apply Proposition \ref{prop:CutEdgeBoltzmann} and Theorem~\ref{thm:CutEdgeIBHPM} to get that \begin{equation*}
	\begin{array}{cccl} 
	r_{\qt}^k W_{\qt}^{(k)} & \underset{k\rightarrow \infty}{\approx} & k^{-\frac{a}{a-1}}  & \mbox{if} \quad p=p_\q^c,  \\
	\\
	r_{\qt}^k W_{\qt}^{(k)}  &\underset{k\rightarrow \infty}{\approx} & k^{-a}  & \mbox{if}  \quad p_\q^c<p<1.  \\
\end{array}\end{equation*} The criteria of Propositions \ref{prop:TypeSequences} and \ref{prop:TypeSequences2} give the expected result, recalling that $a=\alpha+1/2$.\end{proof}

\subsection{$\Hcut_{k}$ and the peeling process}  
In order to prove Theorem \ref{thm:CutEdgeIBHPM}, the main idea is to relate $ \cut_{k}$ to yet another event $\Hcut_{k}$ where we require furthermore than $ \mathcal{C}^{-}$ does not surround $ \mathcal{C}^{+}$. The last event is convenient because it can be evaluated in terms of the peeling process via Algorithm \ref{alg:Peelingperco} (see Lemma \ref{lem:LinkHcutPeel}) and we will show in the next section that for large values of $k$, the events $ \cut_{k}$ and $\Hcut_{k}$ are almost equivalent (see Lemma \ref{lem:CutVSCutP}). 
 
\medskip

Let us first introduce some notation regarding the process introduced in Section \ref{sec:BondPercolation}. Recall that $(B_{i} : 0\leq i \leq \tau)$ measures the black boundary length in the exploration of $\Minf$ driven by Algorithm~\ref{alg:Peelingperco}. Since we have $\Delta B= 0$ with positive probability, it is more convenient to work with the subordinated process $(B^*_i : 0\leq i \leq \tau^*)$ which has non-zero steps. More precisely, we put $\sb_0=0$ and 
\begin{equation}\label{eqn:DefSig}
	\sb_{i+1}=\inf\left\lbrace j> \sb_i : B_{j}\neq B_{j-1} \right\rbrace, \quad \text{ as well as } \quad B^*_{i}=B_{\sb_i},
\end{equation} for every $0\leq i \leq \tau^*:=\inf\{i> 0 : B^*_i < 0\}$. Note that $(B^*_i : 0\leq i \leq \tau^*)$ is still a (killed) random walk, and we let $\Delta B^*:=B^*_1-B^*_0$. In particular, $\tau^*$ can also be considered as the lifetime of the peeling process (in the sense that $\sb_{\tau^*}=\tau$). The first exit time of $\Z_{>0}$ by the process $(B^*_i : 0\leq i \leq \tau^*)$ is denoted by \begin{equation}\label{eqn:DefT}
	T^*:=\inf\{i> 0 : B^*_i \leq 0\},
\end{equation} and plays an important role in the analysis of cut-edges.

We now introduce the event
\begin{equation}\label{eqn:DefCutP}
	\Hcut_k:= \cut_k \cap \left\lbrace \C^-\cap \Br = \emptyset \right\rbrace,\quad k\in \Z_{\geq 0},
\end{equation} where we recall that $\C^-$ is the connected component of $\C\backslash\{e_*\}$ containing the origin vertex, and $\Br$ is the boundary of $\Minf$ on the right of the root edge. The interest of the event $\Hcut$ is that it can be related to the peeling Algorithm \ref{alg:Peelingperco} through the following lemma. 
\begin{lemma}\label{lem:LinkHcutPeel}
	
	For every $k \in \Z_{\geq 0}$, we have
	\[\Pqinf(\Hcut_k)=\frac{1}{1-p}\Pqinf(T^*=2k+1, \ B^*_{T^*}=0).\]
\end{lemma}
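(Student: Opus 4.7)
The plan is to analyze the peeling exploration of $\Minf$ driven by Algorithm \ref{alg:Peelingperco} and match the geometric event $\Hcut_k$ with the trajectory event $\{T^{*}=2k+1,\, B^{*}_{T^{*}}=0\}$ of the subordinated walk $(B^{*}_i)$, up to a mark–distribution factor $(1-p)$.

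First, I would run Algorithm \ref{alg:Peelingperco} starting from $\e_0$ (the embedded $\mathbb{Z}$-line) with $e_*$ as the unique black edge, so that $B_0 = 1$. On the event $\Hcut_k$, the cluster $\mathcal{C}^-$ is finite and, thanks to the condition $\mathcal{C}^{-}\cap\Br=\emptyset$, lies entirely on the left of $e_*$. The exploration therefore encircles $\mathcal{C}^-$ before touching $\Br$: each of the $2k$ edges of $\partial\mathcal{C}^-$ is visited once and incorporated into the black segment via an \emph{event~1} step of Algorithm \ref{alg:Peelingperco} (the peeled edge carries at least one mark, being an edge of $\mathcal{C}^{-}$), contributing a $+1$ step to $(B^{*}_i)$. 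After these $2k$ positive steps the black segment has length $2k+1$ (counting $e_*$), and the exploration is then closed by a single event of type $\mathsf{G}_{\infty,k}$ identifying the current peeled edge with the edge at the other extremity of the black segment. This last step is a $-(2k+1)$ step of $B^{*}$ bringing it down to $0$, so that $T^{*}=2k+1$ and $B^{*}_{T^{*}}=0$ hold.

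Conversely, on $\{T^{*}=2k+1,\, B^{*}_{T^{*}}=0\}$ the peeling encloses a finite region of outer black perimeter $2k+1$, which then contains a cluster component of the source of $e_*$ with boundary of length $2k$; the condition $\mathcal{C}^{-}\cap\Br=\emptyset$ is automatic since the exploration closes before reaching $\Br$. The two events differ by one extra ingredient: for the closing step to actually be a peeling step (event~2 of Algorithm \ref{alg:Peelingperco}) and not a mark-transferring event~1, the peeled edge at time $T^{*}$ must carry no mark on its unexplored side. By the decorated spatial Markov property of Lemma \ref{lem:DecoratedSMP}, this number of marks is an independent $\mathrm{Geom}(p)$ random variable, so the ``no mark'' event has probability $(1-p)$. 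This yields
\[
\Pqinf(T^{*}=2k+1,\, B^{*}_{T^{*}}=0)=(1-p)\,\Pqinf(\Hcut_k),
\]
which rearranges to the claimed identity.

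The main subtlety is in rigorously justifying that on $\Hcut_k$ the peeling trajectory is exactly as described, with no premature intermediate $\mathsf{G}_{\infty,j}$ event before the closing, and conversely that any trajectory satisfying $T^{*}=2k+1,\, B^{*}_{T^{*}}=0$ encloses precisely the component $\mathcal{C}^{-}$ of interest. The fill-in convention for $\mathsf{G}_{\infty,\cdot}$ events (together with the spatial Markov property, which handles the independent Boltzmann filling of swallowed regions) and the topological constraint $\mathcal{C}^{-}\cap\Br=\emptyset$ (which keeps the exploration on the correct side of $e_*$) are the key ingredients needed to make this correspondence rigorous.
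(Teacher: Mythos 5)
Your overall framework coincides with the paper's: translate $\Hcut_k$ into the trajectory event $\{T^*=2k+1,\ B^*_{T^*}=0\}$, with the factor $(1-p)$ coming from the conditional distribution of the number of marks on $e_*$ given that it is black. However, the central intermediate claim --- that on $\Hcut_k$ all $2k$ steps of $B^*$ before closing are $+1$ steps, because ``the peeled edge carries at least one mark, being an edge of $\C^-$'' --- is false, and the ``subtlety'' you flag at the end (ruling out premature $\Gs_{\infty,j}$ events before the closing) cannot be resolved, because such events genuinely occur. The same issue affects your converse direction: $B^*_{T^*-1}$ need not equal $2k+1$, and the closing event need not be $\Gs_{\infty,k}$.

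The reason is isthmus (cut) edges of $\C^-$ carrying exactly one mark. When the exploration first visits such an edge, the unique mark is transferred to the boundary (event 1, $+1$ step). When the exploration later reaches the opposite half-edge, the internal side then carries no mark, so a genuine peeling step of type $\Gs_{\infty,j}$ with $2j+1 < B_i$ is triggered; this contributes a negative increment to $B^*$ while still discovering one new half-edge of $\partial\C^-$. (The paper makes exactly this point in the caption of Figure \ref{fig:examplecomplete}, contrasting isthmus edges with one mark and with several marks.) Hence the trajectory of $B^*$ up to $T^*$ is not monotone. The correct counting, as in the paper's proof, is weaker and more robust: every non-zero increment of $B$ for $i<\tau^*$, \emph{irrespective of its sign}, corresponds to the discovery of exactly one half-edge of $\partial\C$, so that $\#\partial\C^-=T^*-1$ with no assumption on the trajectory's shape. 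If you replace your step-by-step trajectory description with this observation, the rest of your argument (the identification $\Hcut_k\cap\{N_{e_*}=1\}=\{T^*=2k+1,\ B^*_{T^*}=0\}$ and the $(1-p)$ factor) goes through as intended.
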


\begin{proof} Recall from the Section \ref{sec:IBHPM} that the peeling exploration driven by Algorithm \ref{alg:Peelingperco} follows the left boundary of the percolation cluster $ \mathcal{C}$ until the later intersects the right boundary of the map (at that time, the cluster is swallowed in a finite part of the map and the exploration stops). In this interpretation, each step of the process $(B^{*}_i : 0\leq i \leq \tau^*)$ (i.e.~each time $B_{i+1}-B_i$ is non-zero) corresponds to the exploration of exactly one \emph{half-edge} on the boundary of $ \mathcal{C}$, see Figure~\ref{fig:CutT} for an example. This holds only for $0\leq i < \tau^*$, because the last step of the exploration does not correspond to the discovery of a half-edge on the boundary of $\mathcal{C}$ (but rather on the right boundary of $\Minf$).

Let us examine the situation if $B^{*}_{T^*}=0$.  In that case, the peeling exploration identifies a half-edge to the root edge at time $T^*$, which enforces $N_{e_*}=1$ (otherwise, no peeling step is performed when visiting the half-edge corresponding to the root, since a mark is discovered). This also imposes that $\C^-\cap \Br = \emptyset$, and by the above observation $\#\partial \mathcal{C}^{-} = T^*-1$ so that \[ \left\lbrace T^*=2k+1, \ B^*_{T^*}=0 \right\rbrace \subset \Hcut_k \cap \left\lbrace N_{e_*}=1 \right\rbrace.\] This situation is depicted in Figure \ref{fig:CutT}. Conversely, on the event $\Hcut_k$, the peeling exploration visits a half-edge that is identified to the root edge in $\Minf$. If additionally $N_{e_*}=1$, this half-edge carries no mark, so that a peeling step is performed on it, which enforces $B^*_{T^*}=0$ (and thus $ \#\partial \mathcal{C}^{-} = T^*-1$). This gives
\[\left\lbrace T^*=2k+1, \ B^*_{T^*}=0 \right\rbrace = \Hcut_k \cap \left\lbrace N_{e_*}=1 \right\rbrace.\] Finally, since the event $\Hcut_k$ depends only on the percolated map (and not on the exact number of marks carried by the edges) and since the number of marks $N_{e_*}$ of the root edge is a geometric variable conditioned to be larger than one, we have
\[\Pqinf(\Hcut_k, \ N_{e_*}=1)=(1-p)\Pqinf(\Hcut_k),\] which concludes the proof.  \end{proof}

\begin{figure}[h!]
	\centering
	\includegraphics[scale=1.4]{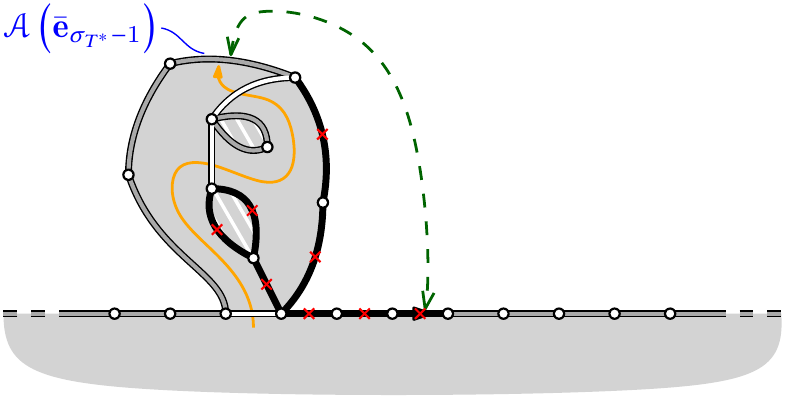}
	\caption{Illustration of the proof of Lemma \ref{lem:LinkHcutPeel}. The peeling process at time $T^*-1$ on the event $\Hcut_k\cap\{N_{e_*}=1\} = \{T^*=2k+1, \ B^*_{T^*}=0\}$. In orange, the percolation interface. In green, the identification of edges performed at time $T^*$ between the root edge and the peeled edge (that carries no mark).}
	\label{fig:CutT}
	\end{figure}

We now move to the estimation of the probability of the event $\Hcut_k$.

\begin{lemma}\label{lem:CutEdgePrimeIBHPM} Let $ \q$ be a weight sequence of type $a \in (2,5/2]$.
\begin{itemize}
	\item Critical case. If $p=p_\q^c$, we have
\[\Pqinf(\Hcut_k)\underset{k \rightarrow \infty}{\approx}k^{-\frac{a}{a-1}}.\]

\item Supercritical case. If $p_\q^c<p<1$, we have
\[\Pqinf(\Hcut_k)\underset{k \rightarrow \infty}{\approx} k^{-a}.\]
\end{itemize}

\end{lemma}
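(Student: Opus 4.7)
The starting point is Lemma~\ref{lem:LinkHcutPeel}, which reduces $\Pqinf(\Hcut_{k})$ to the computation of $\Pqinf(T^{*}=2k+1,\,B^{*}_{T^{*}}=0)$. The key observation is that the random walk $(B^{*}_{i})$ has increments supported in $\{1\}\cup\{-(2j+1):j\geq 0\}$, so its positive jumps are all bounded by $+1$. Shifting the walk by $-1$ so that it starts at $0$, Kemperman's cyclic lemma (the ballot / Dwass formula, which applies precisely when the maximal jump equals $+1$) yields, for every $n\geq 1$,
\[
\Pqinf\bigl(T^{*}=n,\,B^{*}_{n}=0\bigr) \;=\; \frac{1}{n}\,\Pqinf\bigl(B^{*}_{n}=0\bigr),
\]
which combined with Lemma~\ref{lem:LinkHcutPeel} gives the master identity
\[
\Pqinf(\Hcut_{k}) \;=\; \frac{1}{(1-p)(2k+1)}\,\Pqinf\bigl(B^{*}_{2k+1}=0\bigr).
\]

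It therefore remains to estimate the local probability $\Pqinf(B^{*}_{2k+1}=0)$, i.e.~the probability that a sum of $n=2k+1$ i.i.d.~copies of $\Delta B^{*}$ equals $-1$. By Proposition~\ref{prop:TypeSequences2} applied to the type-$a$ weight sequence $\q$, one has $\nu_{\q}((-\infty,-k])\approx k^{1-a}$, which via \eqref{eqn:LawBstar} translates into $\mathbb{P}(\Delta B^{*}\leq -x)\approx x^{1-a}$. Thus $\Delta B^{*}$ lies in the domain of attraction of a spectrally negative stable law of index $\beta:=a-1\in(1,3/2]$, and the asymptotics of the two cases then correspond to two different regimes for this walk.

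In the critical case $p=p_{\q}^{c}$, the walk is centered by the very definition of $p_{\q}^{c}$ recalled at the end of Section~\ref{sec:half-planepeeling}. Gnedenko's local limit theorem for lattice walks in the stable domain of attraction, with norming sequence of order $n^{1/\beta}$, then gives $\Pqinf(B^{*}_{n}=0)\approx n^{-1/\beta}=n^{-1/(a-1)}$ (the parity constraint $B^{*}_{n}\in 2\mathbb{Z}$ being automatically met by the choice $n=2k+1$), whence $\Pqinf(\Hcut_{k})\approx k^{-1-1/(a-1)}=k^{-a/(a-1)}$. In the supercritical case $p_{\q}^{c}<p<1$, one has $\mu:=\mathbb{E}[\Delta B^{*}]>0$, so that reaching $0$ from $1$ at time $n$ is a large deviation event. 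The one-big-jump principle for random walks with regularly varying negative tail yields $\Pqinf(B^{*}_{n}=0)\approx n\cdot\mathbb{P}(\Delta B^{*}\approx -n\mu)\approx n\cdot n^{-a}=n^{1-a}$, and therefore $\Pqinf(\Hcut_{k})\approx k^{-a}$.

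The main technical obstacle is to pin down the slowly varying corrections absorbed in $\approx$ within both the local limit theorem (critical) and the one-big-jump estimate (supercritical); both are classical consequences of the regular variation of the tail of $\Delta B^{*}$ established via Proposition~\ref{prop:TypeSequences2}, so the heart of the proof lies entirely in the combinatorial reduction of the first paragraph, the probabilistic inputs being standard heavy-tailed random walk results.
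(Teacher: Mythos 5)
Your argument matches the paper's proof step for step: reduce via Lemma~\ref{lem:LinkHcutPeel}, apply Feller's cyclic (Kemperman/Dwass) lemma to the upward-skip-free walk $B^*$ to get $\Pqinf(T^*=2k+1,\,B^*_{T^*}=0)=\tfrac{1}{2k+1}\Pqinf(B^*_{2k+1}=0)$, and then invoke a local limit theorem in the critical case and a local one-big-jump estimate in the supercritical case. The only point you leave implicit, and which the paper makes precise, is the supercritical local estimate: rather than a heuristic $\P(\Delta B^*\approx -n\mu)$, the paper centres the walk and cites the $(0,1]$-subexponentiality theorem of Denisov--Dieker--Shneer, which is exactly the rigorous local form of the one-big-jump principle you gesture at.
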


\begin{remark} Later on, we will use the following consequence of Lemma \ref{lem:CutEdgePrimeIBHPM}, that is obtained by Karamata's theorem \cite[Proposition 1.5.10]{bingham_regular_1989}: when $p=p_\q^c$, we have
\begin{equation}\label{eqn:SumCut2}
	\sum_{k\geq m}\Pqinf(\Hcut_k)\underset{m \rightarrow \infty}{\approx} m^{-\frac{1}{a-1}}.
\end{equation}	
\end{remark}

\begin{proof} By Lemma \ref{lem:LinkHcutPeel}, it suffices to estimate $\Pqinf(T^*=2k+1, \ B^*_{T^*}=0)$. For convenience, we consider the random walk $(B^*_i : i\geq 0)$ without killing. Using Lemma \ref{lem:LawB}, we have that $(B^*_i : i\geq 0)$ is centered if $p=p_\q^c$, and has negative (resp. positive) drift if $p<p_\q^c$ (resp. $p>p_\q^c$). We now use Feller's combinatorial (or ``cyclic") lemma \cite[Chapter XII.6]{feller_introduction_1971} to get that
\begin{equation}\label{eqn:CyclicLemma}
	\Pqinf\left(T^*=2k+1, \ B^*_{T^*}=0\right)=\frac{1}{2k+1}\Pqinf\left(B^*_{2k+1}=0\right).
\end{equation} The cases of critical and supercritical percolation are now treated separately.

\smallskip

\noindent\textit{Critical case $(p=p_\q^c)$.} The random walk $(B^*_i : i\geq 0)$ is centered. We apply the local limit theorem (\cite[Theorem 4.2.1]{ibragimov_independent_1971}) to $(B^*_i :  i \geq 0)$, whose steps are in the domain of attraction of a stable distribution with parameter $a-1$. We obtain that
\[\Pqinf(B^*_{k}=0) \underset{k \rightarrow \infty}{\approx} k^{-\frac{1}{a-1}},\] which gives the expected result by \eqref{eqn:CyclicLemma}.
	
\smallskip

\noindent\textit{Supercritical case $(p_\q^c<p<1)$.} The random walk $(B^*_i : i\geq 0)$ has positive drift $d>0$. We introduce the centered version $(\bar{B}_i : i\geq 0)$ of the random walk $(B^*_i : i\geq 0)$  by setting $\bar{B}_i=-B^*_i+di$ for every $i\in\Z_{\geq 0}$ (notice the minus sign). Clearly, we have 
\[\Pqinf\left(B^*_{2k+1}=0\right)=\Pqinf\left(\bar{B}_{2k+1}=d(2k+1)\right).\] Recall that $\Delta B^*$ is in the domain of attraction of a (spectrally negative) stable distribution with parameter $a-1$. In fact, by Lemma \ref{lem:LawB}, the definition \eqref{eqn:DefNu} of $\nu_\q$ and Proposition \ref{prop:TypeSequences2}, we have the more precise identity \begin{equation}\label{eqn:DomainAttrStrong}
	\P\left(\Delta B^*=-(2k+1)\right)\underset{k \rightarrow \infty}{\approx}k^{-a}.
\end{equation} By \cite[Theorem 9.1]{denisov_large_2008}, this ensures that the centered variable $\bar{B}_1$ is $(0,1]$-subexponential, which means that
\[\Pqinf\left(\bar{B}_{k}\in (x,x+1]\right)\underset{k \rightarrow \infty}{\sim} k \cdot \Pqinf\left(\bar{B}_{1}\in (x,x+1]\right),\] uniformly for $x\geq \varepsilon k$, for every $\varepsilon>0$. Thus, we get
\[\Pqinf\left(B^*_{2k+1}=0\right)=\Pqinf\left(\bar{B}_{2k+1}=d(2k+1)\right)\underset{k \rightarrow \infty}{\approx}k^{1-a}.\] The identity \eqref{eqn:CyclicLemma} concludes the proof. \end{proof}

\subsection{$\cut_{k}$, $\Hcut$ and the proof of Theorem \ref{thm:CutEdgeIBHPM}}
We finally prove Theorem \ref{thm:CutEdgeIBHPM} from Lemma \ref{lem:CutEdgePrimeIBHPM}. This will complete the proof of Theorem \ref{th:dualityperco} in the supercritical and critical cases. For this we will show that $\cut_{k}$ and $\Hcut_{k}$ are almost equivalent for large values of $k$.
The first step is to establish some bounds for the tail distribution of the perimeter of the percolation cluster under $\Pqinf$.

\begin{lemma}\label{lem:TailSizeCluster}Let $ \q$ be a weight sequence of type $a \in (2,5/2]$.\begin{itemize}
	\item Critical case. If $p=p_\q^c$, there exists slowly varying functions $L_1$ and $L_2$ such that for every $m\in\N$,
\[L_1(m) m^{-\frac{a-2}{a-1}}\leq \Pqinf\left(\#\partial\C \geq 2m \right)\leq L_2(m) m^{-\frac{a-2}{a-1}}.\]

\item Supercritical case. If $p_\q^c<p<1$, we have
\[\Pqinf\left(\#\partial\C \geq 2m \right)\underset{m \rightarrow \infty}{\longrightarrow} \Pqinf\left(\#\partial\C = \infty \right) > 0.\]\end{itemize}

\end{lemma}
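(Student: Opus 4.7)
We treat the two regimes separately.

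\textit{Supercritical case.} The events $\{\#\partial\C \geq 2m\}$ are decreasing in $m$, so by monotone convergence $\Pqinf(\#\partial\C \geq 2m) \to \Pqinf(\#\partial\C = \infty)$. Positivity of the limit follows from $\Pqinf(|\C|=\infty)>0$ (which holds by definition of $p_\q^c$ since $p>p_\q^c$) combined with \eqref{eqn:RelationSizeCTau} and Lemma \ref{lem:LawB}: on $\{|\C|=\infty\}=\{\tau=\infty\}$ the walk $B^*$ has strictly positive drift and therefore drifts to $+\infty$, producing infinitely many positive steps, each of which reveals a new half-edge of $\partial\C$.

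\textit{Critical case.} The idea is to compare the tail of $\#\partial\C$ to that of the exit time $T^*$. At $p=p_\q^c$, Lemma \ref{lem:LawB} together with the definition \eqref{eqn:DefNu} of $\nu_\q$ and Proposition \ref{prop:TypeSequences2} (criterion \textit{4}) show that $B^*$ is a centered random walk whose increments lie in the domain of attraction of a spectrally negative stable law of index $\beta:=a-1\in(1,3/2]$. Standard fluctuation theory (Rogozin's formula and Wiener--Hopf factorization, already invoked in the proof of Lemma \ref{lem:CutEdgePrimeIBHPM}) then yields
\[ \Pqinf(T^* > n) \underset{n\to\infty}{\approx} n^{-\rho}, \qquad \rho := 1-\tfrac{1}{\beta} = \tfrac{a-2}{a-1}. \]
Next, the geometric interpretation used in the proof of Lemma \ref{lem:LinkHcutPeel} --- each non-zero step of $B$ reveals exactly one half-edge of $\partial\C^-$ --- yields $\#\partial\C^- \asymp T^*$ up to additive corrections controlled by the overshoot $|\Delta B^*_{\tau^*}|$, whose tail $\approx n^{1-a}$ (by \eqref{eqn:DomainAttrStrong}) is negligible at the scale $n^{-\rho}$.

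For the \textit{lower} bound, $\#\partial\C \geq \#\partial\C^-$, and a law-of-large-numbers argument shows that on the event $\{T^*>Cm\}$ the number of positive steps of $B^*$ (each occurring with probability $q:=\Pqinf(\Delta B^*=1)>0$, Lemma \ref{lem:LawB}) is at least $qCm/2 \geq 2m$ for $C$ large enough, whence $\#\partial\C^- \geq 2m$. For the \textit{upper} bound, we invoke the reflection symmetry at the root edge --- flipping the orientation of $e_*$ interchanges $\C^-$ and $\C^+$ without changing the law of the decorated map, so $\#\partial\C^+$ and $\#\partial\C^-$ are equidistributed --- and use $\#\partial\C \leq \#\partial\C^-+\#\partial\C^+ + O(1)$ together with a union bound to conclude $\Pqinf(\#\partial\C \geq 2m) \lesssim \Pqinf(\#\partial\C^- \geq m - O(1)) \approx m^{-\rho}$, which gives the desired slowly-varying upper bound.

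The main obstacle is the sharpening of the comparison $\#\partial\C^- \asymp T^*$: while the correspondence between boundary half-edges and nonzero steps of $B$ is transparent during the exploration itself, the treatment of the final peeling step (where the walk may overshoot into the negatives and thereby swallow several extra half-edges at once) requires a careful case analysis, feasible thanks to the light tail $\approx n^{1-a}$ of the overshoot relative to the target scale $n^{-\rho}$.
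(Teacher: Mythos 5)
Your supercritical argument is correct and essentially identical to the paper's. Your critical argument has the right big idea (relate $\#\partial\C$ to the exit time of the spectrally negative walk $B^*$, apply fluctuation theory to get the exponent $-(a-2)/(a-1)$), but the way you pass from the exit time to the perimeter has two genuine gaps.

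First, you assert that each non-zero step of $B$ reveals exactly one half-edge of $\partial\C^-$; the correct statement (used in the proof of Lemma \ref{lem:LinkHcutPeel}) is that each such step reveals a half-edge of $\partial\C$. The identity $\#\partial\C^- = T^*-1$ holds only on the special event $\{T^*=2k+1,\, B^*_{T^*}=0\}$, i.e.\ when the exploration re-identifies the root edge; in general $\partial\C^-$ and the explored segment of $\partial\C$ are different objects (in particular when $e_*$ is not a cut-edge, $\C^-=\C\setminus\{e_*\}$ and $\partial\C^-$ is essentially all of $\partial\C$, not its ``left'' part). So the comparison $\#\partial\C^-\asymp T^*$ is not established, and the lower bound $\#\partial\C\geq\#\partial\C^-$ is not clearly true either (removing an edge from a map can increase the perimeter).

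Second, and more fundamentally, bounding the overshoot $|B^*_{\tau^*}|$ does not control the unexplored part of $\partial\C$. When the left exploration stops, the cluster is swallowed into a finite hole which is then filled in by a fresh Boltzmann map; the right portion of $\partial\C$ lies inside that filled-in region and its length is not bounded by the overshoot (it is a cluster boundary inside an independent Boltzmann map of given perimeter, hence of unbounded size). The paper's resolution is different and avoids both issues: one defines a second, symmetric \emph{right} peeling exploration (peeling the free edge to the right of the black segment) with associated walk $\tilde{B}$ and lifetime $\tilde\tau$, distributed as $\tau^*$ (but not independent of it). Because the left exploration reveals, step by step, the left half-edges of $\partial\C$, and the right exploration the remaining ones, one gets the clean sandwich $\tau^*\leq\#\partial\C\leq\tau^*+\tilde\tau$, hence $\Pqinf(\tau^*\geq 2m)\leq\Pqinf(\#\partial\C\geq 2m)\leq 2\,\Pqinf(\tau^*\geq m)$ by a union bound. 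The tail of $\tau^*$ is then computed from \cite[Theorem~1]{doney_exact_1982} (with \cite[Theorem~8.2.18]{borovkov_asymptotic_2008} to handle $B^*_0=1$), giving the exponent you found. I recommend replacing the $\C^-$/$\C^+$ decomposition and the overshoot heuristic by this two-sided exploration argument.
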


\begin{proof} The proof strategy closely follows that of \cite[Theorem 2]{angel_percolations_2015}. The main issue is that the peeling Algorithm \ref{alg:Peelingperco} does not provide the perimeter $\#\partial\C$ of the cluster. Indeed, the finite hole of the map $\e_{\tau}$ is not visited by the peeling exploration and possibly contains part of~$\partial\C$ (see Figure \ref{fig:PeelingT} for an example). We circumvent this problem by defining a \textit{right peeling exploration}, symmetric to that of Algorithm \ref{alg:Peelingperco} in the sense that it always explores the free edge on the right of the black boundary of the hole. We denote by $(\tilde{B}_i : i \geq 0)$ the process defined as $(B^*_i : i\geq 0)$ for the right peeling exploration (both unkilled for convenience), and let $\tilde{\tau}:=\inf\{i\geq 0 : \tilde{B}_i< 0\}$. An argument similar to the proof of Lemma \ref{lem:LinkHcutPeel} then shows that  \[\tau^* \leq\#\partial\C \leq \tau^* + \tilde{\tau}.\]
Since $\tau^*$ and $\tilde{\tau}$ have the same distribution (though not independent) we get
	\[\Pqinf\left(\tau^* \geq 2m \right) \leq \Pqinf\left(\#\partial\C \geq 2m \right)\leq 2\Pqinf\left(\tau^* \geq m \right).\] We now treat the cases of critical and supercritical percolation separately.
	
\smallskip

\noindent\textit{Critical case $(p=p_\q^c)$.} The random walk $(B^*_i : i\geq 0)$ is centered. We use \cite[Theorem 1]{doney_exact_1982}, which ensures that \[\Pqinf\left(\tau^* \geq m \right)\underset{m \rightarrow \infty}{\approx} m^{\frac{1}{a-1}-1},\] since $\Delta B^*$ is in the domain of attraction of a stable distribution with parameter $a-1$. (To be complete, we also need \cite[Theorem 8.2.18]{borovkov_asymptotic_2008} because $(B^*_i : i\geq 0)$ starts at $B^*_0=1$.)

\smallskip

\noindent\textit{Supercritical case $(p_\q^c<p<1)$.} This case immediately follows by the definition of $p^c_\q$ and the continuity of probability along a monotone sequence of events.\end{proof}

\begin{figure}[ht]
	\centering	
	\includegraphics[scale=1.4]{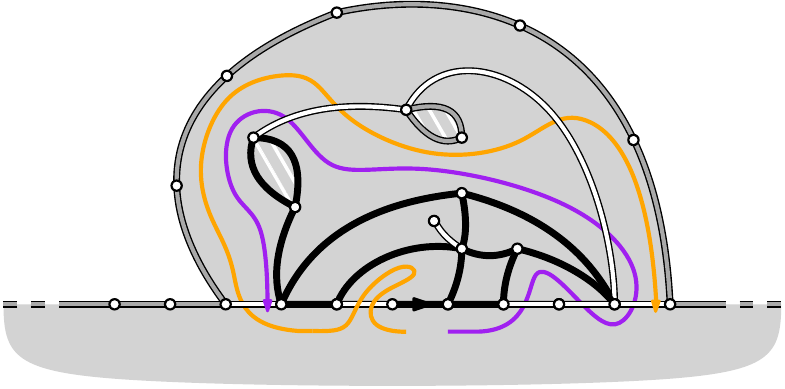}
	\caption{The left percolation interface of the cluster $\C$ until $\tau^*$ (in orange), and its right percolation interface until $\tilde{\tau}$ (in purple). }
	\label{fig:PeelingT}
	\end{figure}

The next lemma establishes a sensitivity property of $\Pqinf(\#\partial\C\geq 2m)$ with respect to the boundary condition. Recall that $\Pqinf$ is the law of the $\q$-$\IBHPM$ equipped with marks corresponding to a bond percolation model with ``free-black-free" boundary condition. We denote by $\Ptinf$ the law of the same model with totally free boundary condition, i.e.~where all the edges carry i.i.d.\ numbers of marks with geometric law of parameter $p$. Recall that the percolation cluster $\C$ is the black connected component of the \textit{origin} (the source of the root edge). 

\begin{lemma}\label{lem:BdaryCondition}Let $ \q$ be a weight sequence of type $a \in (2,5/2]$. Then, we have
	\[\frac{\Pt_\q^{(\infty)}(\#\partial\C\geq 2m)}{\Pqinf(\#\partial\C\geq 2m)}\underset{m \rightarrow \infty}{\longrightarrow} c\in(0,1).\]
\end{lemma}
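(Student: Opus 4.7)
The plan is to express $\Pt_\q^{(\infty)}$ as a sum of terms each asymptotically proportional to $\Pqinf(\#\partial \C \geq 2m)$, using a memoryless decomposition of the root-edge mark together with the translation invariance of the $\IBHPM$ along its boundary. The only distributional difference between the two measures is the extra mark forced on $e_*$ under $\Pqinf$; by memorylessness of the geometric marks, $\Pqinf = \Pt_\q^{(\infty)}(\cdot \mid N_{e_*} \geq 1)$, hence
\[
\Pt_\q^{(\infty)}(\#\partial \C \geq 2m) \;=\; p\,\Pqinf(\#\partial \C \geq 2m) \;+\; \Pt_\q^{(\infty)}(\#\partial \C \geq 2m,\, N_{e_*} = 0),
\]
so the task reduces to showing that the second summand is equivalent to some constant times $\Pqinf(\#\partial \C \geq 2m)$ as $m \to \infty$.

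To treat this residual, I decompose according to the length $L \in \Z_{\geq 0}$ of the maximal run of black boundary edges strictly to the left of the origin. For $L = \ell \geq 1$ the cluster $\C$ contains the $\ell+1$ vertices of this run, so re-rooting the $\IBHPM$ at the edge $e_{-1}$ (oriented from vertex $-1$ to vertex $0$) leaves $\C$ unchanged and preserves the law of the map by translation invariance. Under the new rooting, the conditioning $\{N_{e_*} = 0, L = \ell\}$ becomes $\{N_{\text{new } e_*} \geq 1,\, R^{\#} = 1,\, L^{\#} = \ell-1\}$, where $R^{\#}$ and $L^{\#}$ denote the analogous right- and left-maximal black boundary runs at the new root. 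Comparing the explicit geometric densities of the boundary marks then yields
\[
\Pt_\q^{(\infty)}(\#\partial \C \geq 2m,\, N_{e_*} = 0,\, L = \ell) \;=\; p\,\Pqinf(\#\partial \C \geq 2m,\, R^{\#} = 1,\, L^{\#} = \ell-1) \qquad (\ell \geq 1),
\]
and summing over $\ell \geq 1$ telescopes to $p\,\Pqinf(\#\partial \C \geq 2m,\, R^{\#} = 1)$. The remaining piece $\{L = 0\}$ (both $e_*$ and $e_{-1}$ white) is handled either by iterating the rerooting at $e_{-2}, e_{-3}, \ldots$, or by running a symmetric ``rightward'' peeling exploration going into the interior from the origin; it contributes an additional term of the form $c'(p)\,\Pqinf(\#\partial \C \geq 2m)\,(1+o(1))$ with $c'(p) \geq 0$.

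The proof is then completed by the asymptotic independence statement: for each fixed $k \geq 1$,
\[
\Pqinf\!\left(R^{\#} = k \,\bigm|\, \#\partial \C \geq 2m\right) \;\xrightarrow[m \to \infty]{}\; \lambda_k \in (0,1).
\]
In the supercritical regime this is straightforward, since $\Pqinf(\#\partial \C = \infty) > 0$ by definition of $p_\q^c$ and $R^{\#}$ depends on finitely many marks, hence is asymptotically independent of the tail event $\{\#\partial \C = \infty\}$. In the critical regime, where the perimeter tail is only polynomial by Lemma~\ref{lem:TailSizeCluster}, this is the main obstacle: it requires a uniform decoupling estimate for the peeling random walk of Section~\ref{sec:half-planepeeling} conditioned to survive for many steps, in the same spirit as the arguments proving Theorem~\ref{thm:CutEdgeIBHPM}. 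Granted this, $c$ is an explicit convex combination of the $\lambda_k$'s and $p$; strict positivity is immediate from $p > 0$, while $c < 1$ follows from a strict comparison expressing that conditioning on $N_{e_*} \geq 1$ asymptotically favours large-perimeter clusters.
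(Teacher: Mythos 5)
You share the paper's opening move --- use memorylessness of the geometric marks to write $\Pt_\q^{(\infty)}(\#\partial\C\geq 2m)=p\,\Pqinf(\#\partial\C\geq 2m)+\Pt_\q^{(\infty)}(\#\partial\C\geq 2m,\ N_{e_*}=0)$ --- but then diverge in how you treat the residual term, and this is where the argument has a genuine gap. Your re-rooting decomposition replaces the residual by quantities of the form $\Pqinf(\#\partial\C\geq 2m,\ R^\#=1)$ (plus a vaguely-handled $\{L=0\}$ piece), and to close the argument you need the asymptotic-independence statement $\Pqinf(R^\#=k\mid \#\partial\C\geq 2m)\to\lambda_k$. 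You acknowledge yourself that in the critical case this is ``the main obstacle'' and leave it unproven; it is a genuinely hard estimate, since the conditioning event decays only polynomially and does bias local observables near the root, and obtaining the required uniformity would be comparable in difficulty to the estimates the lemma is meant to feed into. The final assertion $c<1$ is also stated rather than derived.

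The paper's proof avoids any asymptotic-independence input by producing a \emph{self-referential} decomposition instead. It peels the free root edge under $\Pt_\q^{(\infty)}$: with probability $p$ a mark is revealed and the conditional law is exactly $\Pqinf$; otherwise a peeling step is triggered, and except on the event $\Gs^{(l)}_\infty$ (root edge white and identified to an edge on its left, confining $\C$ in a finite hole) the cluster is governed by the infinite hole, which again has law $\Pt_\q^{(\infty)}$ by the spatial Markov property. This yields a \emph{linear} equation in the single unknown $\Pt_\q^{(\infty)}(\#\partial\C\geq 2m)$, which solves to an explicit constant $c\in(0,1)$ up to the correction $\Pt_\q^{(\infty)}(\#\partial\C\geq 2m,\ \Gs^{(l)}_\infty)$; the latter is shown negligible through the inclusion $\{\#\partial\C\geq 2m\}\cap\Gs^{(l)}_\infty\subset\bigcup_{j\geq m}\Hcut_j$ and the already-established Lemmas \ref{lem:CutEdgePrimeIBHPM} and \ref{lem:TailSizeCluster}. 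That self-reference is the idea you are missing; without it, the decoupling estimate you would have to supply is where the real difficulty sits, so the proposal as written is incomplete.
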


\begin{remark}At first glance, one could expect the constant $c$ in the statement of Lemma \ref{lem:BdaryCondition} to be equal to one, meaning that the asymptotics of $\Pqinf(\#\partial\C\geq 2m)$ do not depend on the boundary condition. However, as the proof should reveal, the black root edge allows two ``seeds" for the percolation cluster, which changes the asymptotics of $\Pqinf(\#\partial\C\geq 2m)$ by a constant factor.
\end{remark}

\begin{proof} We argue under $\Pt_\q^{(\infty)}$, and reveal the status of the free root edge in the sense of Algorithm \ref{alg:Peelingperco}. Note that under our assumptions, this root (half-)edge is black if and only if it carries a cross, because there is no other black edge on the boundary it can be identified to. As usual, two cases may to happen.

\begin{itemize}
	\item The root edge carries a mark. Then the map has ``free-black-free" boundary condition as for $\Pqinf$.
	\item The root edge carries no mark, and a peeling step is performed. If the peeling step is of type $\Cs$ or $\Gs_{\infty,\cdot}$, the event $\{\#\partial\C \geq 2m\}$ is realized if and only if it is realized in the infinite hole of the map, that has law $\Pt_\q^{(\infty)}$ by the spatial Markov property.
\end{itemize} Finally, we let $\Gs^{(l)}_\infty$ be the event that the root edge is white and identified to an edge on its left. Then we have
\begin{multline*}
	\Pt_\q^{(\infty)}(\#\partial\C\geq 2m)=p\Pqinf(\#\partial\C\geq 2m)+(1-p)\left( \sum_{k\geq 1} \frac{\nu_\q(-k)}{2} \right) \Pt_\q^{(\infty)}(\#\partial\C\geq 2m) \\
	+ \Pt_\q^{(\infty)}\left(\#\partial\C\geq 2m, \ \Gs^{(l)}_\infty\right),
\end{multline*} from where we obtain
\begin{equation}\label{eqn:DecompositionFirstPeeling}
	\frac{\Pt_\q^{(\infty)}(\#\partial\C\geq 2m)}{\Pqinf(\#\partial\C\geq 2m)}=p+(1-p) \left( \sum_{k\geq 1} \frac{\nu_\q(-k)}{2} \right) \frac{\Pt_\q^{(\infty)}(\#\partial\C\geq 2m)}{\Pqinf(\#\partial\C\geq 2m)}+\frac{\Pt_\q^{(\infty)}\left(\#\partial\C\geq 2m,\ \Gs^{(l)}_\infty\right)}{\Pqinf(\#\partial\C\geq 2m)}.
\end{equation} To conclude, we will show that
\begin{equation}\label{eqn:CvgZeroLastTerm}
	\frac{\Pt_\q^{(\infty)}\left(\#\partial\C\geq 2m, \ \Gs^{(l)}_\infty\right)}{\Pqinf(\#\partial\C\geq 2m)}\underset{m \rightarrow \infty}{\longrightarrow} 0.
\end{equation} Intuitively, this comes from the fact that on the event $\Gs^{(l)}_\infty$, the percolation cluster $\C$ is confined in a finite hole of the map. More precisely, the key observation is the following almost sure inclusion of events:
\begin{equation}\label{eqn:InclusionEvents}
	\left\lbrace\#\partial\C\geq 2m, \ \Gs^{(l)}_\infty\right\rbrace \subset \bigcup_{j\geq m}{\Hcut_j}.
\end{equation} Indeed, on the event $\Gs^{(l)}_\infty$, the white root edge separates the map into a finite hole containing $\C$ and an infinite hole (see Figure \ref{fig:CutClosed}). As seen in the proof of Lemma \ref{lem:LinkHcutPeel}, since the root edge carries no mark, in such situation we have $\#\partial\C^-=T^*-1$, and moreover $\partial\C=\partial\C^-$ which yields \eqref{eqn:InclusionEvents} by definition of $\Hcut_j$. (Note that since $\#\partial\C^-=T^*-1$, the stopping time $T^*$ is necessarily odd because the map is bipartite.) We can now prove assertion \eqref{eqn:CvgZeroLastTerm}.

\smallskip

\noindent\textit{Critical case $(p=p_\q^c)$.} By Lemma \ref{lem:CutEdgePrimeIBHPM} and \eqref{eqn:SumCut2}, we get that \[\Pt_\q^{(\infty)}\left(\#\partial\C\geq 2m, \ \Gs^{(l)}_\infty\right)\leq \sum_{j\geq m}\Pqinf(\Hcut_j)\underset{m \rightarrow \infty}{\approx} m^{-\frac{1}{a-1}},\] while by Lemma \ref{lem:TailSizeCluster}, we have $\Pqinf(\#\partial\C\geq 2m)\geq L_1(m)m^{-\frac{a-2}{a-1}}$. This yields the expected result, since $a\in(2,5/2]$.

\smallskip

\noindent\textit{Supercritical case $(p_\q^c<p<1)$.} By Lemma \ref{lem:CutEdgePrimeIBHPM} and since $a>2$, we know that  
\[\Pt_\q^{(\infty)}\left(\#\partial\C\geq 2m, \ \Gs^{(l)}_\infty\right) \leq \sum_{j\geq m}\Pqinf(\Hcut_j)\underset{m \rightarrow \infty}{\longrightarrow} 0\] as the rest of a convergent series. This  is enough to conclude because $\Pqinf(\#\partial\C\geq 2m) \rightarrow \Pqinf(\#\partial\C = \infty)>0$ as $m$ goes to infinity. 

\end{proof}

\begin{figure}[ht]
	\centering
	\includegraphics[scale=1.6]{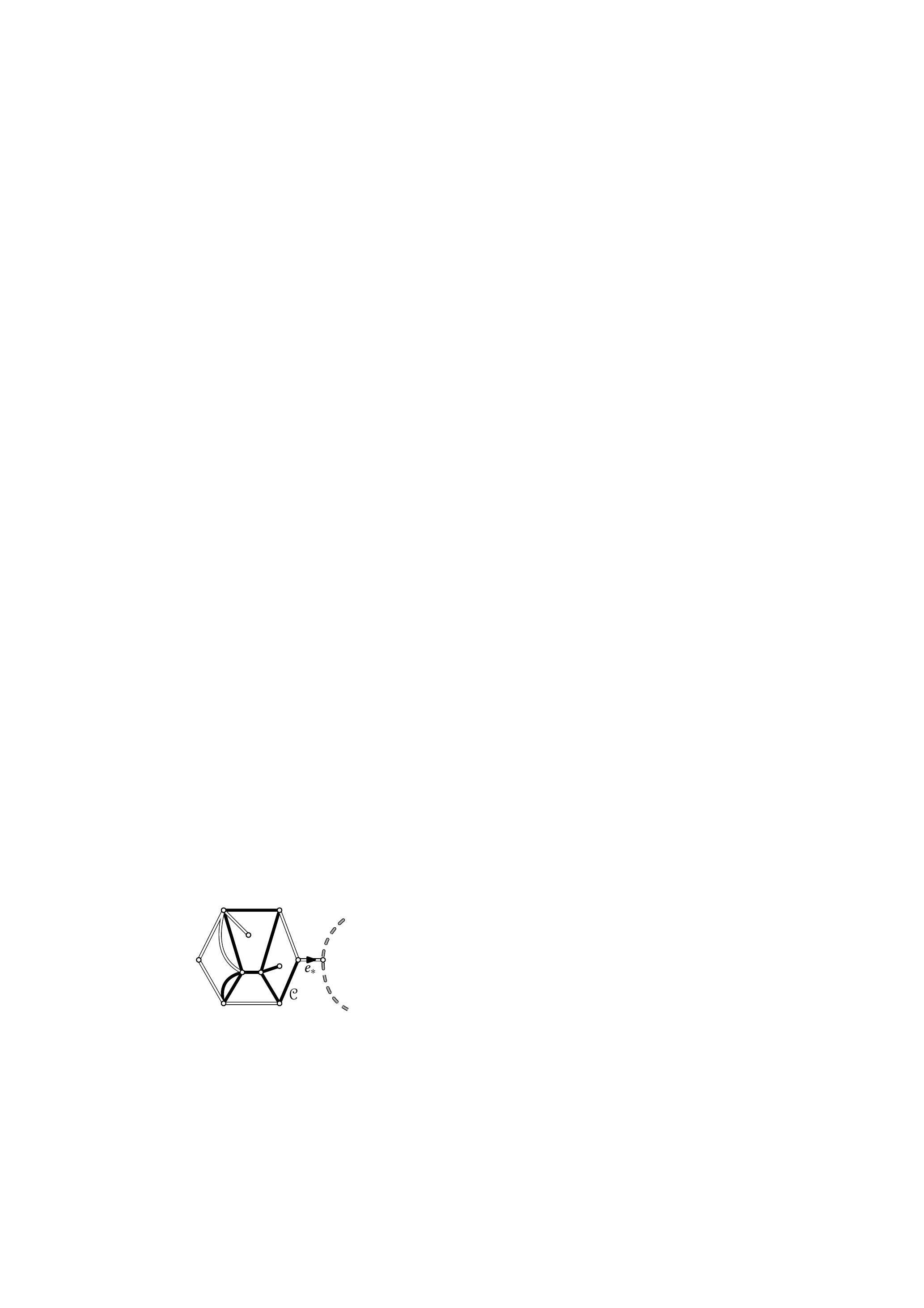}
	\caption{The percolation cluster $\C$ on the event $\Gs^{(l)}_\infty$.}
	\label{fig:CutClosed}
	\end{figure}
	
The last preliminary lemma shows that the events $\cut_k$ and $\Hcut_k$ have the same probability under $\Pqinf\left(\cdot \mid \#\partial\C \geq 2m \right)$ when $m$ tends to infinity.

\begin{lemma}\label{lem:CutVSCutP} For every $k\in\Z_{\geq 0}$,
	\[\lim_{m \rightarrow \infty}\left| \Pqinf(\cut_k\mid \#\partial\C \geq 2m) - \Pqinf(\Hcut_k\mid \#\partial\C \geq 2m) \right| =0.\]
\end{lemma}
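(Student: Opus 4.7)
The plan is to first observe that since $\Hcut_k \subset \cut_k$, the absolute value equals $\Pqinf(\cut_k \setminus \Hcut_k \mid \#\partial \C \geq 2m)$, so the goal reduces to showing this conditional probability vanishes as $m \to \infty$. The event $\cut_k \setminus \Hcut_k$ equals $\cut_k \cap \{\C^- \cap \Br \neq \emptyset\}$, and under it the finite map $\C^-$ of root-face degree $2k$ reaches the right boundary at some vertex $v$. Together with a path inside $\C^-$ from the origin to $v$ (which can be chosen along the boundary of the root face of $\C^-$, hence of length at most $2k$), the portion of $\Br$ from $v$ back to the target, and the root edge, one traces a closed loop in $\Minf$ enclosing $\C^+$ inside a bounded region, so that $\C$ itself is finite.

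In the supercritical regime this observation essentially ends the argument: by Lemma \ref{lem:TailSizeCluster}, the denominator $\Pqinf(\#\partial \C \geq 2m)$ converges to $\Pqinf(\#\partial \C = \infty) > 0$, while the numerator satisfies
\[\Pqinf(\cut_k \setminus \Hcut_k, \#\partial\C \geq 2m) \leq \Pqinf(\#\partial\C < \infty, \#\partial\C \geq 2m) \underset{m \to \infty}{\longrightarrow} 0,\]
so the ratio vanishes.

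The critical regime $p = p_\q^c$ is more delicate because both sides of the ratio decay at the polynomial rate $m^{-(a-2)/(a-1)}$, so a strictly sharper estimate on the numerator is required. The joint event $\{\cut_k \setminus \Hcut_k, \#\partial\C \geq 2m\}$ forces $\C$ to be large yet confined in a pocket whose geometry is tied to the exact constraint $\#\partial\C^- = 2k$. I would analyze this by running the right-peeling exploration symmetric to Algorithm \ref{alg:Peelingperco} started from the root edge, and show that on this joint event the exploration must travel an anomalously long distance along $\Br$ before terminating. Combined with the fact that the left peeling must simultaneously produce exactly $2k$ units of black boundary for $\C^-$, a first-passage estimate for the two associated random walks, in the spirit of the proofs of Lemmas \ref{lem:CutEdgePrimeIBHPM} and \ref{lem:TailSizeCluster}, should yield a decay strictly faster than $m^{-(a-2)/(a-1)}$. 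Extracting this polynomial gain is the main obstacle, as it requires carefully exploiting the fact that ``$\C^-$ is small, $\C^+$ is large, and $\C^-$ reaches $\Br$'' form a strictly rarer constellation than the tail event $\{\#\partial\C \geq 2m\}$ alone.
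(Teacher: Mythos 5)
Your reduction to showing $\Pqinf(\cut_k\cap\{\C^-\cap\Br\neq\emptyset\}\mid\#\partial\C\geq 2m)\to 0$ is correct, and your topological observation that on this event the closed loop (path in $\C^-$ to $\Br$, segment of $\Br$, root edge) traps $\C^+$, so that $\C$ is finite, is exactly the right geometric picture. Your supercritical treatment is a correct and slightly leaner variant of the paper's: you bound the numerator by $\Pqinf(2m\leq\#\partial\C<\infty)\to 0$ directly, rather than via a union bound.

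The gap is the critical case, which you leave as a plan rather than a proof. The sharper bound on the numerator does not require a new two-sided first-passage analysis; it is already available from what precedes the lemma. The trap you identified implies not just that $\C^+$ is finite, but that $\C^+$ is disjoint from $\Bl$ (otherwise $e_*$ would not be a cut-edge); and on $\cut_k\cap\{\#\partial\C\geq 2m\}$ one has $\#\partial\C^+\geq 2(m-k-1)$. Hence, writing $\Hcut'_j$ for the mirror of $\Hcut_j$ built from $\C^+$ and $\Bl$, one gets the inclusion
\[
\cut_k\cap\{\C^-\cap\Br\neq\emptyset,\ \#\partial\C\geq 2m\}\ \subset\ \bigcup_{j\geq m-k-1}\Hcut'_j ,
\]
and by left-right symmetry $\Pqinf(\Hcut'_j)=\Pqinf(\Hcut_j)$. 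Lemma~\ref{lem:CutEdgePrimeIBHPM} and \eqref{eqn:SumCut2} then give, at $p=p_\q^c$, a numerator bounded by $\sum_{j\geq m-k-1}\Pqinf(\Hcut_j)\approx m^{-1/(a-1)}$, while Lemma~\ref{lem:TailSizeCluster} bounds the denominator below by $L_1(m)m^{-(a-2)/(a-1)}$. Since $a\in(2,5/2]$ implies $\frac{1}{a-1}>\frac{a-2}{a-1}$, the ratio vanishes. This is the polynomial gain you were trying to extract; it comes from the confinement of the \emph{large} piece $\C^+$ away from $\Bl$ (quantified by $\Hcut'_j$), not from the constraint $\#\partial\C^-=2k$, which is fixed and does not produce decay in $m$.
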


\begin{proof} By definition \eqref{eqn:DefCutP} of the event $\Hcut_k$, it suffices to show that for every $k\in\Z_{\geq 0}$,\begin{equation}\label{eqn:LimitProba1}
	\Pqinf\left(\cut_k, \ \C^-\cap \Br \neq \emptyset \mid \#\partial\C \geq 2m\right)\underset{m \rightarrow \infty}{\longrightarrow} 0.
\end{equation} Let $k\in\Z_{\geq 0}$, and define the symmetric events of $\cut_k$ and $\Hcut_k$ for the right part $\C^+$ of the percolation cluster,
\[\cut'_k=\left\lbrace e_* \text{ is a cut-edge of } \C \text{ and } \partial\C^+=2k\right\rbrace \quad \text{and} \quad \Hcut'_k:=\cut'_k\cap \left\lbrace \C^+\cap \Bl = \emptyset \right\rbrace,\] where $\Bl$ is the boundary of $\Minf$ on the left of the root edge. Then we have the almost sure inclusion of events
\begin{equation}\label{eqn:InclusionCutR}
	\cut_k\cap\left\lbrace \C^-\cap \Br \neq \emptyset, \ \#\partial\C\geq 2m\right\rbrace \subset \bigcup_{j \geq m-k-1}\Hcut'_j.
\end{equation} Indeed, when $\C^-\cap \Br \neq \emptyset$, the left part $\C^-$ of the cluster hits the right boundary of the map $\Minf$. If additionally the event $\cut_k$ is realized, $\C^+$ cannot intersect the left boundary of $\Minf$ (otherwise, the root edge would not be a cut-edge of $\C$). Finally, on the event $\cut_k$, we have $\#\partial\C^-=2k$ so that if $\#\partial\C \geq 2m$, then $\#\partial\C^+\geq 2(m-k-1)$. This concludes the proof of \eqref{eqn:InclusionCutR}, which is illustrated in Figure \ref{fig:PeelingTBis}. Note that by symmetry, the events $\Hcut_j$ and $\Hcut'_j$ have the same probability for every $j \in \Z_{\geq 0}$.

From Equation \eqref{eqn:InclusionCutR} we obtain
\begin{equation}\label{eqn:EndLemmaSupercritOrCrit}
	\Pqinf(\cut_k, \ B_T<0 \mid \#\partial\C \geq 2m)\leq \frac{\sum_{j\geq m-k-1}{\Pqinf\left(\Hcut_{j} \right)}}{\Pqinf(\#\partial\C \geq 2m)}.
\end{equation} We now argue as in the end of Lemma \ref{lem:BdaryCondition} to conclude the proof. \end{proof}

\begin{figure}[ht]
	\centering	
	\includegraphics[scale=1.4]{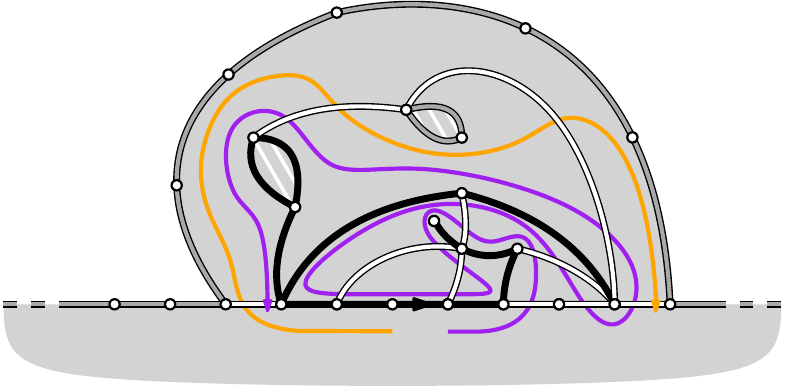}
	\caption{The map $\Minf$ on the event $\cut_k\cap\{\C^-\cap \Br \neq \emptyset, \ \#\partial\C\geq 2m\}$, together with (part of) the left and right percolation interfaces. Note that the event $\Hcut'_j$ has to be realized for some $j\geq m-k-1$.}
	\label{fig:PeelingTBis}
	\end{figure}

We can finally prove Theorem \ref{thm:CutEdgeIBHPM}.

\begin{proof}[Proof of Theorem \ref{thm:CutEdgeIBHPM}] By Lemma \ref{lem:CutVSCutP}, it suffices to find an equivalent as $k$ goes to infinity of the quantity
\[\lim_{m\rightarrow \infty}\Pqinf(\Hcut_k\mid \#\partial\C \geq 2m).\] Moreover, by arguing as in Lemma \ref{lem:LinkHcutPeel}, we have
\begin{equation}\label{eqn:HcutCond}
	\Pqinf(\Hcut_k\mid \#\partial\C \geq 2m)=\frac{1}{1-p}\Pqinf(T^*=2k+1,\ B_T=0\mid \#\partial\C \geq 2m), \quad k,m\in\Z_{\geq 0}.
\end{equation} Let $k\in\Z_{\geq 0}$, and recall that $\sb_{T^*}$ is a stopping time with respect to the filtration of the peeling process, so that the infinite hole of the map $\e_{\sb_{T^*}}$ has the law $\Pt^{(\infty)}_\q$ of the $\q$-$\IBHPM$ with free boundary condition. Moreover, on the event $\{T^*=2k+1, \ B^*_{T^*}=0\}$, $\C^+$ is the cluster of the target of the root edge in the infinite hole of $\e_{\sb_{T^*}}$, and $\#\partial\C \geq 2m$ if and only if $\#\partial\C^+\geq 2(m-k-1)$ (see Figure \ref{fig:PeelingT4} for an illustration). By applying the (spatial) Markov property at time $T^*$, we get
\[\Pqinf\left(T^*=2k+1,\ B^*_{T^*}=0, \ \#\partial\C \geq 2m\right)=\Pqinf\left(T^*=2k+1,\ B^*_{T^*}=0\right)\Pt^{(\infty)}_\q(\#\partial\C\geq 2(m-k-1)).\] From Lemma \ref{lem:BdaryCondition}, \eqref{eqn:HcutCond} and Lemma \ref{lem:LinkHcutPeel}, we can rewrite this as
\begin{equation}\label{eqn:LimitProba2}
	\lim_{m\rightarrow \infty}\Pqinf\left(\Hcut_k \mid \#\partial\C \geq 2m\right) = c\cdot \Pqinf\left(\Hcut_k \right)\lim_{m\rightarrow \infty} \frac{\Pqinf(\#\partial\C \geq 2(m-k-1))}{\Pqinf(\#\partial\C \geq 2m)},
\end{equation} where $c\in(0,1)$ is the constant in the statement of Lemma \ref{lem:BdaryCondition}. Moreover, by Proposition \ref{prop:CutEdgeBoltzmann} and Lemma \ref{lem:CutVSCutP}, we know that the limit on the left-hand side exists. Let us define 
\[\kappa := \lim_{m\rightarrow \infty} \frac{\Pqinf(\#\partial\C \geq 2(m-1))}{\Pqinf(\#\partial\C \geq 2m)}.\] Then the power series whose coefficients are $(\Pqinf(\#\partial\C \geq 2m) : m\geq 0)$ has radius of convergence $\kappa$. By Lemma \ref{lem:TailSizeCluster}, we deduce that for $p_\q^c\leq p<1$, we have $\kappa=1$. Together with Equation \eqref{eqn:LimitProba2} and Lemma \ref{lem:CutEdgePrimeIBHPM}, this concludes the proof.\end{proof}

\begin{figure}[ht]
	\centering
	\includegraphics[scale=1.5]{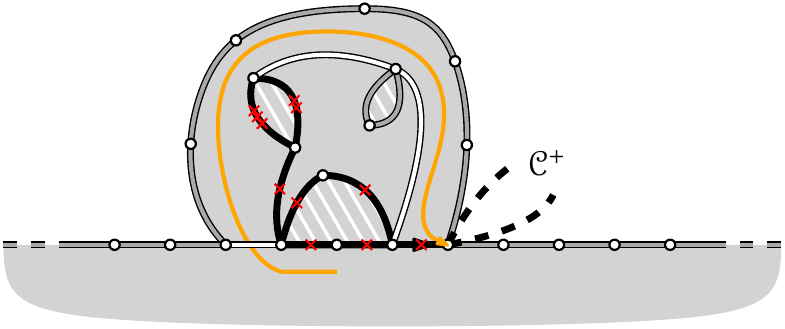}
	\caption{The peeling exploration at time $T^*$, on the event $\{T^*=2k+1, \ B^*_{T^*}=0\}$. The left percolation interface is in orange. The right part $\C^+$ of the cluster is also the cluster of the target of the root edge in the infinite hole of the map $\e_{\sb_{T^*}}$.}
	\label{fig:PeelingT4}
	\end{figure}
	
\begin{remark} The strategy we developed in this section can also be adapted to the case of subcritical percolation. In this setting, one can prove the existence of a constant $C>0$ such that
\[\lim_{m\rightarrow\infty}\Pqinf(\cut_k\mid \#\partial\C \geq 2m)\underset{k \rightarrow \infty}{\sim} C k^{-\frac{3}{2}},\] and deduce Theorem \ref{th:dualityperco} also in this regime. However, this does not provide the sharpness of the phase transition (Proposition \ref{prop:Sharpness}), so that we use another strategy in Section \ref{sec:Subcritical}.
\end{remark}	
	
\section{Subcritical percolation and sharpness of the phase transition}\label{sec:Subcritical}

We now deal with Bernoulli bond percolation in the subcritical regime $(0\leq p< p_\q^c$). As we mentioned, the proof of Theorem \ref{th:dualityperco} in this case can also be obtained by sharpening the study of cut-edges that we made in Section \ref{sec:CutEdges}. However, this would not be sufficient to prove the exponential decay of the size of the percolation cluster claimed in Proposition \ref{prop:Sharpness} and we preferred coming back to the finite setting of $\q$-Boltzmann maps.\\
Throughout this section, we fix a weight sequence $ \q$ of type $a \in (2,5/2]$, as well as $p \in [0,p^c_\q)$. We assume that the $\q$-Boltzmann map $M_{ \q}$ is decorated by i.i.d.~numbers of marks with geometric distribution of parameter $p$, so that it corresponds to the bond percolation model on $M_\q$ with the same parameter. We first establish in Lemma \ref{lem:ExponentialTailPerimeter} an estimate on the tail distribution of the perimeter of the percolation cluster in $ M_{\q}$, from where we deduce Theorem \ref{th:dualityperco} and Proposition \ref{prop:Sharpness} with short proofs.

\begin{lemma}\label{lem:ExponentialTailPerimeter} There exists $C_1,C_2>0$ such that for every $m \in \Z_{\geq 0}$,
\[\Pq(\#\partial \C \geq 2m)\leq C_1\exp(-C_2 m).\]
	
\end{lemma}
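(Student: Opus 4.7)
My plan is to explore the percolation cluster via Algorithm \ref{alg:PeelingpercoFinite} on the $\q$-Boltzmann map $M_\q$ and to control the black boundary length $(B_i)_{0\leq i\leq \theta}$ by an exponential martingale argument. Since the entire boundary of the hole is black at termination, $B_\theta = \#\partial\C$, so it suffices to show that $\sup_{i\leq \theta} B_i$ has an exponential tail under $\Pq$.

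The starting point is a Cram\'er-type estimate in the $\q$-$\IBHPM$. By Lemma \ref{lem:LawB}, the increments of $(B_i)$ in the half-plane are i.i.d., with positive part bounded by $+1$ and negative tail $\Pqinf(\Delta B = -(2k+1)) = (1-p)\nu_\q(-k-1)/2$ decaying like $k^{-a}$ (by \eqref{eqn:DomainAttrStrong} and $a > 2$). In particular $\Delta B$ admits exponential moments near $0$, and since $p < p_\q^c = \lambda/(\lambda+2)$ its mean $p - (1-p)\lambda/2$ is strictly negative; hence there exists $\eta > 0$ with $\E_{\q}^{(\infty)}\bigl[e^{\eta \Delta B}\bigr] < 1$.

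The main technical step is to transfer this bound to the finite transitions of Lemma \ref{lem:LawBFinite}, which converge pointwise to the half-planar ones as $P_i \to \infty$ by \cite[Lemma 6]{curien_peeling_2016}. Dominated convergence (using the uniform bound $e^{\eta\Delta B}\leq e^\eta$) yields thresholds $P^*, B^* \geq 1$ such that
\[
\E_\q\bigl[e^{\eta \Delta B_{i+1}} \mid \mathcal{F}_i\bigr] \leq 1 \qquad \text{whenever } P_i \geq P^* \text{ and } B_i \geq B^*.
\]
A subtlety is that the finite peeling truncates the black-swallowing jumps at $k \leq (B_i-1)/2$ and redistributes the missing mass onto zero-change (free-swallowing) events, which actually raises the MGF compared to the half-plane; the strict inequality $\E_{\q}^{(\infty)}[e^{\eta\Delta B}] < 1$ absorbs this effect once $B_i$ is large enough. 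Setting $\sigma := \inf\{i : P_i < P^* \text{ or } B_i < B^*\}$, the process $\bigl(e^{\eta B_{i \wedge \sigma \wedge \theta}}\bigr)$ is a non-negative supermartingale starting at $e^\eta$, so Doob's maximal inequality gives $\Pq(\sup_{i \leq \sigma \wedge \theta} B_i \geq 2m) \leq e^{\eta(1-2m)}$. The main obstacle, and what makes the argument delicate, is to handle the excursions of $B$ below $B^*$: a renewal-type argument combining the Doob estimate on each excursion above $B^*$ with a geometric bound on the total number of such excursions (finite because the exploration must consume hole perimeter at each non-degenerate step before $B$ can climb back above $B^*$) finally upgrades the Doob bound into $\Pq(\#\partial\C \geq 2m) \leq C_1 e^{-C_2 m}$.
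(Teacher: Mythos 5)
Your strategy mirrors the paper's at a high level: establish negative-drift control on the black boundary length $B$ above a threshold $B^*$ (you via a Cram\'er/exponential-supermartingale bound, the paper via stochastic domination by a fixed law $\zeta$ of bounded support with negative mean, see \eqref{eq:expo2}), and then patch the small-$B$ region to conclude. The genuine gap is in the mechanism you invoke to patch the small-$B$ region. You assert that the number of excursions of $B$ above $B^*$ is geometric ``because the exploration must consume hole perimeter at each non-degenerate step before $B$ can climb back above $B^*$''. This is not valid: the hole perimeter $2P_i$ is not monotone along the exploration (it increases by $2k-2$ on every event of type $\mathsf{C}_k$), so there is no resource being depleted; and even if there were, the available budget would be of the order of $\#\mathrm{E}(M_\q)$, whose mean is infinite under $\Pq$ since $\q$ is critical.

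The correct mechanism, which is the content of \eqref{eq:expo1}, is of a different kind: whenever $B_i<B^*$, the exploration has a probability bounded away from zero, uniformly in $P_i$, of terminating within the next two peeling steps, by the event that the peeled edge is identified to the free edge bordering the short black segment on the right; this probability is controlled via the explicit transition kernel $\p^{(P_i)}$ and \cite[Lemma 6]{curien_peeling_2016}. It is this uniform lower bound on immediate termination, not a depleted perimeter budget, that makes the number of dips below $B^*$ geometric and lets you upgrade your Doob estimate to the claimed exponential tail. Your exponential-supermartingale bound above the threshold is fine (the ``dominated convergence'' should be phrased with a fixed truncation level $K$ and monotonicity of the negative-jump contributions to the moment generating function, since the law of $\Delta B_{i+1}$ itself varies with the state, so the bound $e^{\eta\Delta B}\le e^\eta$ alone does not dominate a changing measure), but without the termination-probability ingredient the argument does not close.
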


\begin{proof}[Proof of Lemma \ref{lem:ExponentialTailPerimeter}] The case $p=0$ is trivial so we assume that $0<p<p_\q^c$. We proceed to the bond percolation exploration of $M_{\q}$ driven by Algorithm \ref{alg:PeelingpercoFinite}. By the estimate \eqref{eqn:BoundPerimeterCLuster}, it suffices to prove that the stopping time $\theta$ of the exploration has an exponential tail. Recall from Lemma \ref{lem:LawBFinite} the notation $B_i$ and $F_i$ for the length of the black and free boundaries at step $i$ of the exploration, for every $0\leq i \leq \theta$. 

On the one hand, we see that for every $A>0$, there exists $ \varepsilon >0$ such that 
 \begin{eqnarray} \label{eq:expo1} \Pq( \theta > i+2 \mid \theta >i, \ B_{i} < A) \leq 1- \varepsilon.  \end{eqnarray}  Indeed, when the length of the free boundary is even, the peeling exploration may stop at the next step by identifying the two free edges neighboring the black boundary. If the length of the free boundary is odd, we may require that we first turn the edge $\A(\e_{i+1})$ into a black edge and then repeat the above argument. Moreover, if the black boundary has bounded length, these events occur with probability bounded away from zero from the explicit transition probabilities $\p$ described in Section \ref{sec:PeelingqBoltzmann} together with \cite[Lemma 6]{curien_peeling_2016}. This reasoning is illustrated by Figure~\ref{fig:PeelingPercoFiniteStop}.
  
On the other hand, we can choose $A$ large enough so that when $B_{i}\geq A$ and $\theta >i$, then $(B_i : 0\leq i \leq \theta)$ has a ``uniform negative drift''. More precisely, recall from Lemma \ref{lem:LawBFinite} that conditionally on $\theta >i$ (i.e., $F_{i}>0$) we have 
\begin{equation*}\label{eqn:LawBstarFinite}
	\Pq(B_{i+1}=B_i+1)= p \quad \text{and} \quad \Pq(B_{i+1}=B_i-2k-1) = (1-p) \p^{(P_i)}(P_i-k-1,k), \quad 0\leq k \leq \frac{1}{2}(B_i-1),
\end{equation*} where $P_i=\tfrac{1}{2}(B_i+F_i)$, and using \cite[Lemma 6]{curien_peeling_2016} again we have
\begin{equation}\label{eqn:ConvProbTrans}
	\p^{(l)}(l+k-1,k)=\displaystyle\frac{W_\q^{(k)}W_\q^{(l-k-1)}}{W_\q^{(l)}}\underset{l \rightarrow \infty}{\longrightarrow} W_\q^{(k)} \rq^{k+1}=\frac{1}{2} \nu_\q(-k-1), \quad k \geq 0.
\end{equation} Since $p < p_{ \q}^{c}$, this ensures (see the end of Section \ref{sec:half-planepeeling}) that \[p -  \frac{1-p}{2}\sum_{k\geq 0} (2k+1) \nu_{ \mathbf{q}}(-k-1)< 0, \quad \text{and so} \quad p -  \frac{1-p}{2}\sum_{k= 0}^{D} (2k+1) \nu_{ \mathbf{q}}(-k-1)< 0\] for $D$ large enough. Using this observation together with \eqref{eqn:ConvProbTrans} and the fact that $P_i \geq B_i/2$, one can find a probability measure $\zeta$ supported on $\{-D, \ldots, 1\}$ with negative mean, such that if $A$ is large enough, then the conditional distribution of $B_{i+1}-B_i$ given $B_{i}\geq A$ and $\theta >i$ is stochastically dominated by $\zeta$. In symbols,
 \begin{eqnarray} \label{eq:expo2}\textup{Law}( B_{i+1}-B_i \mid  \theta >i, \ B_{i}\geq A ) \overset{ \mathrm{sto}}{\leq } \zeta.  \end{eqnarray}
 It is now straightforward to combine \eqref{eq:expo1} and \eqref{eq:expo2} to establish that $\theta$ has an exponential tail. This can be proved along the following lines: from \eqref{eq:expo2} and the fact that $\E[\zeta]<0$, we see that the return times of $(B_i : 0 \leq i \leq \theta)$ to the interval $[0, A)$ have an exponential tail. Hence, using \eqref{eq:expo1} we can bound $\theta$ from above by a sum of a geometric number of i.i.d.\ random variables having exponential tails, so that $\theta$ also has an exponential tail. We leave the details to the reader. \end{proof}

We now prove Theorem \ref{th:dualityperco} in the subcritical case using Lemma \ref{lem:ExponentialTailPerimeter}.

\begin{proof}[Proof of Theorem \ref{th:dualityperco} (Subcritical case)] By Proposition \ref{prop:LawCluster}, the percolation cluster $\C$ of $M_\q$ is a $\qt$-Boltzmann map for a certain admissible weight sequence $\qt$. Let us proceed by contradiction and assume that $\qt$ is critical. By \cite[Proposition 2]{budd_peeling_2016}, the partition function for pointed maps with a boundary of perimeter $2k$ reads
	\[W_{\qt}^{(k,\bullet)}:= \frac{1}{\qt_k}\sum_{ \substack{\m \in \M^\bullet \\ \#\partial\m =2k} }{w_{\qt}(\m)}= (4r_{\qt})^{-k} {2k \choose k}, \quad k\in \Z_{\geq 0}.\] Then, recalling from \eqref{eqn:fq} the definition of the function $f_{\qt}$ and the identity $r_{\qt}=(4Z_{\qt})^{-1}$, we find
\[f_{\qt}(x)=2r_{\qt} \sum_{k= 1}^\infty \left(\frac{x}{Z_{\qt}}\right)^{k-1} \qt_k W_{\qt}^{(k,\bullet)}, \quad x\geq 0.\] Now, by combining \cite[Lemma 6]{curien_peeling_2016} and \cite[Proposition 2]{budd_peeling_2016}, we have that
\[ \lim_{k \rightarrow \infty} \left(W_{\qt}^{(k,\bullet)}\right)^{1/k}=\lim_{k \rightarrow \infty} \left(W_{\qt}^{(k)}\right)^{1/k}=\frac{1}{r_{\qt}}.\] Moreover, since $\C$ is a $\qt$-Boltzmann map and by Lemma \ref{lem:ExponentialTailPerimeter},
\[\Pq(\#\partial\C = 2k)= \P_{\qt}(\#\partial M_{\qt}=2k) \propto \qt_k W^{(k)}_{\qt} \quad \text{and} \quad \Pq(\#\partial\C = 2k) \leq C_1 \exp (-C_2 k), \quad k \in \N,\] for positive constants $C_1$ and $C_2$. This enforces that there exists $\varepsilon >0$ such that $f_{\qt}(Z_{\qt}+\varepsilon)<\infty$. In the terminology of Boltzmann maps, the weight sequence $\qt$ is said to be \textit{regular critical}, see \cite{marckert_invariance_2007}, which is a sub-case of generic critical sequences. In particular, by Theorem \ref{th:dualityperco} (in the critical case), there exists $p^c_{\qt} \in (0,1)$ such that if we perform a bond percolation with parameter $p^c_{\qt}$ on $\C$, the cluster of the origin $\C'$ is a discrete stable map with parameter $7/6$. We let $\q'$ be the weight sequence such that $\C'$ is a $\q'$-Boltzmann map. Then, we get
\begin{equation}\label{eqn:PerimCLuster}
	\P_{\qt}\left( \#\partial \C' \geq 2k \right)=\P_{\q'}(\#\partial M_{\q'}\geq 2k) \propto \sum_{j \geq k}\q'_j W^{(j)}_{\q'},
\end{equation} and the right-hand side decays at least polynomially by Definitions \ref{def:JSLaw} and \ref{def:Classification} as well as Proposition \ref{prop:TypeSequences2}. We finally observe that by a standard coupling argument, the cluster $\C'$ can also be obtained from the initial map $M_\q$ by performing a bond percolation with parameter $p'=p\cdot p^c_{\qt}<p<p^c_\q$. By Lemma \ref{lem:ExponentialTailPerimeter}, this entails
\[\Pq\left( \#\partial\C' \geq 2k \right)\leq C'_1 \exp (-C'_2 k)\] for positive constants $C'_1$ and $C'_2$, in contradiction with \eqref{eqn:PerimCLuster}. This concludes the proof. \end{proof}

We now prove Proposition \ref{prop:Sharpness} in the subcritical case using Lemma \ref{lem:ExponentialTailPerimeter} and Theorem \ref{th:dualityperco}.

\begin{proof}[Proof of Proposition \ref{prop:Sharpness} (Subcritical case)] This proof is inspired by \cite[Proposition 5.1]{bernardi_boltzmann_2017}. By Theorem \ref{th:dualityperco}, the percolation cluster $\C$ of $M_\q$ is a $\qt$-Boltzmann map for a certain subcritical weight sequence $\qt$. We now let the weight sequence $\qt$ vary by defining $\qt(u):=(u^{k-1}q_k : k \in \N)$. Then, we see from \eqref{eqn:fq} that $f_{\qt(u)}(x)=f_{\qt}(ux)$ for every $x \geq 0$. By Lemma \ref{lem:ExponentialTailPerimeter} and using the same argument as in the proof of Theorem \ref{th:dualityperco} (in the subcritical case), we obtain that there exists $\varepsilon>0$ such that $f_{\qt}(Z_{\qt}+\varepsilon)<\infty$. Now, we argue as in \cite[Proposition 5.1]{bernardi_boltzmann_2017}: since $\qt$ is subcritical, we can solve the equation
\[f_{\qt(u)}(x)=1-\frac{1}{x}\] in an open neighborhood of $u=1$ by the implicit function theorem, so that $\qt(u)$ is admissible for some $u>1$ by the criterion recalled in Section \ref{sec:BoltzmannDistributions}. As a consequence, we have
\[ w_{\qt(u)}(\M) := \sum_{\m \in \M} \prod_{f \in \Fm}\qt_{\frac{\deg(f)}{2}} u^{\frac{\deg(f)}{2}-1}= \sum_{\m \in \M} u^{\#\Vm-2} w_{\qt}(\m) <\infty, \] where we used Euler's formula. This entails that
\[\E_\q\left[u^{\vert \C \vert}\right]=\E_{\qt}\left[u^{\#\mathrm{V}(M_{\qt})}\right]=\sum_{\m \in \M} u^{\#\Vm} w_{\qt}(\m) <\infty \] for some $u>1$, as wanted.\end{proof}

\smallskip

\bibliographystyle{siam}
\bibliography{dualityperco}

\end{document}